\definecolor{darkgreen}{RGB}{45, 119, 75}
\newtheorem{theorem}{Theorem}[section]
\newtheorem{corollary}[theorem]{Corollary}
\newtheorem{lemma}[theorem]{Lemma}
\newtheorem{proposition}[theorem]{Proposition}
\newtheorem{remark}[theorem]{Remark}
\numberwithin{equation}{section}
\begin{document}

\title[Schr\"odinger semigroups]{On Dunkl Schr\"odinger semigroups \\
with Green bounded potentials}

\subjclass[2010]{{primary: 44A20, 35K08, 33C52,35J10, 43A32, 39A70}}
\keywords{Rational Dunkl theory, heat kernels, root systems,  Schr\"odinger operators}

\author[Jacek Dziubański]{Jacek Dziubański}
\author[Agnieszka Hejna]{Agnieszka Hejna}
\begin{abstract} On $\mathbb R^N$ equipped with a normalized root system $R$, a multiplicity function $k(\alpha) > 0$, and the associated measure 
 $$ 
dw(\mathbf x)=\prod_{\alpha\in R}|\langle \mathbf x,\alpha\rangle|^{k(\alpha)}\, d\mathbf x, 
$$ 
we consider a Dunkl Schr\"odinger operator $L=-\Delta_k+V$, where $\Delta_k$ is the Dunkl Laplace operator and $V\in L^1_{\rm loc} (dw)$ is a non-negative potential. 
Let $h_t(\mathbf x,\mathbf y)$ and $k^{\{V\}}_t(\mathbf x,\mathbf y)$ denote the Dunkl heat kernel and the integral kernel of the semigroup generated by $-L$ respectively.   We prove that $k^{\{V\}}_t(\mathbf x,\mathbf y)$ satisfies the following heat kernel lower bounds: there are constants $C, c>0$ such that 
$$ h_{ct}(\mathbf x,\mathbf y)\leq C k^{\{V\}}_t(\mathbf x,\mathbf y)$$ 
if and only if 
{$$ \sup_{\mathbf x\in\mathbb R^N} \int_0^\infty \int_{\mathbb R^N} V(\mathbf y)w(B(\mathbf x,\sqrt{t}))^{-1}e^{-\|\mathbf x-\mathbf y\|^2/t}\, dw(\mathbf y)\, dt<\infty, $$
where $B(\mathbf x,\sqrt{t})$ stands for the Euclidean ball centered at $\mathbf x \in \mathbb{R}^N$ and radius $\sqrt{t}$.}

\end{abstract}

\address{Jacek Dziubański, Uniwersytet Wroc\l awski,
Instytut Matematyczny,
Pl. Grunwaldzki 2,
50-384 Wroc\l aw,
Poland}
\email{jdziuban@math.uni.wroc.pl}

\address{Agnieszka Hejna, Uniwersytet Wroc\l awski,
Instytut Matematyczny,
Pl. Grunwaldzki 2,
50-384 Wroc\l aw,
Poland}
\email{hejna@math.uni.wroc.pl}

\thanks{
}

\maketitle

\section{Introduction and statement of the results}\label{Sec:Intro}
 
{Let $A=-\Delta +V$ be a Schr\"odinger operator on $\mathbb R^N$, $N\geq 3$. It is well-known (see~\cite{Sem}) that if $V\geq 0$, $V\in L^1_{\rm loc}(\mathbb R^N(dx))$, then the kernel $k_t(x,y)$ of the semigroup $\{e^{-tA}\}_{t \geq 0}$ satisfies the Gaussian (heat kernel) lower bounds 
$$ t^{-N/2} e^{-\| x-y\|^2/4ct}\leq Ck_t(x,y)$$
with certain constants $C,c>0$ if and only if the potential $V$ is Green bounded, that is,  
 $$ \sup_{x\in\mathbb R^N} \int_{\mathbb R^N} \frac{V(y)}{\| x-y\|^{N-2}}\, dy<\infty.$$
The aim of this paper is to prove similar results in the Dunkl setting. }

{
On the Euclidean space  $\mathbb R^N$ equipped with a normalized root system $R$ and a multiplicity function $k:R \longmapsto (0,\infty)$, let $\Delta_k$ denote the Dunkl Laplace operator (see Section \ref{sec:preliminaries}). Let $dw(\mathbf x)=w(\mathbf x)\, d\mathbf x$ be the associated measure, where 
 \begin{equation}\label{eq:measure}
w(\mathbf x)=\prod_{\alpha\in R}|\langle \mathbf x,\alpha\rangle|^{k(\alpha)}
\end{equation}
is its density with respect to the Lebesgue measure $d\mathbf x$. For a Lebesgue  measurable set $F\subseteq \mathbb R^N$, we denote
\begin{align}\label{eq:measure_w}
w(F):=\int_F dw(\mathbf x).    
\end{align}

It is well-known that $\Delta_k$ generates a semigroup  $\{H_t\}_{t \geq 0}=\{e^{t\Delta_k}\}_{t \geq 0}$ of linear operators on $L^2(dw)$ which has the form 
\begin{align*}
   {H_t}f(\mathbf x)=\int_{\mathbb{R}^N} h_t(\mathbf x,\mathbf y)f(\mathbf y)\, dw(\mathbf y),
\end{align*}
where $0<h_t(\mathbf x,\mathbf y)$ is a smooth function called the Dunkl heat kernel (see Section~\ref{sec:heat} for more details). 

Let $V\in L^1_{\rm loc}{(dw)}$ be a non-negative potential. Consider the Dunkl Schr\"odinger operator 
$$ L=-\Delta_k+V.$$
Then  $-L$ generates a semigroup $\{e^{-tL}\}_{t \geq 0}$ of self-adjoint linear contractions on $L^2(dw)$. The semigroup $\{e^{-tL}\}_{t \geq 0}$ has the form 
\begin{align*}
    e^{-tL}f(\mathbf x)=\int_{\mathbb R^N} k^{\{V\}}_t(\mathbf x,\mathbf y) f(\mathbf y)\, dw(\mathbf y),
\end{align*}
where the integral kernel $k^{\{V\}}_t(\mathbf x,\mathbf y)$ satisfies upper heat kernel bounds 
\begin{equation}\label{eq:k_very_basic}
    0 \leq k^{\{V\}}_t(\mathbf x,\mathbf y)\leq h_t(\mathbf x,\mathbf y).    
\end{equation}

The main goal of this paper is to characterize non-negative potentials $V\in L^1_{\rm loc}(dw)$ for which $k^{\{V\}}_t(\mathbf x,\mathbf y)$ satisfies the following heat kernel lower bound 
\begin{align*}
     h_{Ct}(\mathbf x,\mathbf y)\leq Ck^{\{V\}}_t(\mathbf x,\mathbf y).
\end{align*}
}

In order to state the result we need to introduce some notation. For $\alpha\in R$, let 
\begin{equation}\label{reflection}\sigma_\alpha (\mathbf x):=\mathbf x-2\frac{\langle \mathbf x,\alpha\rangle}{\|\alpha\|^2} \alpha
\end{equation}
be the reflection with respect to the subspace  perpendicular to $\alpha$. Let $G$ denote the reflection group generated by the reflections $\sigma_\alpha$, $\alpha\in R$. We define the distance of the orbit of $\mathbf x$ to the orbit of $\mathbf y$ by 
\begin{equation}\label{eq:d(x,y)}
    d(\mathbf x,\mathbf y)=\min \{ \| \mathbf x-\sigma (\mathbf y)\|: \sigma \in G\}.
\end{equation}
Obviously,
\begin{align*}
    d(\mathbf x,\mathbf y)=d(\mathbf x,\sigma (\mathbf y)) \quad \text{for all } \mathbf x,\mathbf y\in \mathbb R^N \ \text{\rm and } \sigma \in G.
\end{align*}
 Let
\begin{align*}
    B(\mathbf x,r)=\{ \mathbf x'\in \mathbb R^N: \| \mathbf x-\mathbf x'\|\leq r\}    
\end{align*}
stands for the (closed) Euclidean ball centered at $\mathbf x \in \mathbb{R}^N$ and radius $r>0$. 

Let $\mathbf{N}=N+\sum_{\alpha \in R}k(\alpha)$ be the homogeneous dimension of the system $(R,k)$. Throughout this paper we shall assume that $\mathbf N>2$ 

Our goal is to prove the following theorem. 
\begin{theorem}\label{teo:main}
Assume that $\mathbf N>2$ and $V\colon \mathbb{R}^N \longmapsto [0,\infty)$\textup{,} $V\in L^1_{\rm loc}(dw)$. Then the following are equivalent. 
\begin{enumerate}[(a)]
   \item\label{item:h_ct_leq_k_t}
   {The kernel $k^{\{V\}}_t(\mathbf{x},\mathbf{y})$ satisfies the following Dunkl heat kernel lower bound\textup{:} there are constants $C,c>0$ such that for all $\mathbf{x},\mathbf{y} \in \mathbb{R}^N$ and $t>0$ we have 
   $$ h_{ct}(\mathbf{x},\mathbf{y})\leq C k_t^{\{V\}}(\mathbf{x},\mathbf{y}). $$}
  
    \item \label{item:int_k_geq_delta} {There is a constant $\delta>0$ such that for all $\mathbf{x} \in \mathbb{R}^N$ and $t>0$ we have
    \begin{align*}
    \int_{\mathbb{R}^N} k^{\{V\}}_t(\mathbf{x},\mathbf{y})\, dw(\mathbf{y})\geq \delta.    
    \end{align*}
    }
    \item\label{item:potential_Green_bounded}{The potential $V$ is Green bounded, that is, 
   \begin{equation*}
         \sup_{\mathbf x\in\mathbb R^N} \int_0^\infty \int_{\mathbb R^N} V(\mathbf{y})h_s(\mathbf{x},\mathbf{y})\, dw(\mathbf{y})\, ds<\infty. \end{equation*}}

\end{enumerate}
\end{theorem}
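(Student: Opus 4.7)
My plan is to establish the equivalence by the cycle $(a)\Rightarrow(b)\Rightarrow(c)\Rightarrow(a)$; the first two implications are direct consequences of the Markov property $\int h_t(\mathbf x,\mathbf y)\,dw(\mathbf y)=1$ together with the Duhamel identity, while $(c)\Rightarrow(a)$ carries the real analytic content. For $(a)\Rightarrow(b)$, I would simply integrate the hypothesized pointwise bound in $\mathbf y$ against $dw$, obtaining $\delta=1/C$. For $(b)\Rightarrow(c)$, I would start from the Duhamel identity
\[
k_t^{\{V\}}(\mathbf x,\mathbf y)=h_t(\mathbf x,\mathbf y)-\int_0^t\!\int_{\mathbb R^N}h_{t-s}(\mathbf x,\mathbf z)V(\mathbf z)k_s^{\{V\}}(\mathbf z,\mathbf y)\,dw(\mathbf z)\,ds,
\]
integrate in $\mathbf y$, invoke (b) inside the $\mathbf y$-integral to replace $\int k_s^{\{V\}}(\mathbf z,\mathbf y)\,dw(\mathbf y)$ by $\geq\delta$, and use the nonnegativity of $\int k_t^{\{V\}}(\mathbf x,\mathbf y)\,dw(\mathbf y)$ to conclude $\int_0^t\!\int h_r(\mathbf x,\mathbf z)V(\mathbf z)\,dw(\mathbf z)\,dr\leq\delta^{-1}$ uniformly in $\mathbf x,t$; monotone convergence as $t\to\infty$ then yields (c). (The equivalence between this form of Green-boundedness and the ball-measure form from the abstract is a routine consequence of the two-sided Dunkl heat kernel bounds.)

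The main implication $(c)\Rightarrow(a)$ I would approach through the Feynman--Kac / perturbation-series representation
\[
k_t^{\{V\}}(\mathbf x,\mathbf y)=h_t(\mathbf x,\mathbf y)\,\mathbb E^t_{\mathbf x,\mathbf y}\!\bigl[e^{-\int_0^t V(X_s)\,ds}\bigr]
\]
associated to the Dunkl process and its bridge measure. Jensen's inequality reduces the desired pointwise lower bound on $k_t^{\{V\}}$ to a uniform upper bound on the bridge expectation
\[
\mathbb E^t_{\mathbf x,\mathbf y}\!\Bigl[\!\int_0^t\! V(X_s)\,ds\Bigr]=\int_0^t\!\int_{\mathbb R^N}V(\mathbf z)\,\frac{h_{t-s}(\mathbf x,\mathbf z)\,h_s(\mathbf z,\mathbf y)}{h_t(\mathbf x,\mathbf y)}\,dw(\mathbf z)\,ds,
\]
and the key analytic input is a $3G$-type inequality for the Dunkl heat kernel,
\[
\int_0^t h_{t-s}(\mathbf x,\mathbf z)\,h_s(\mathbf z,\mathbf y)\,ds\leq C\,h_t(\mathbf x,\mathbf y)\bigl(G(\mathbf x,\mathbf z)+G(\mathbf z,\mathbf y)\bigr),
\]
with $G(\mathbf u,\mathbf v)=\int_0^\infty h_r(\mathbf u,\mathbf v)\,dr$ the Dunkl Green function. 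Granted this, Green-boundedness bounds the bridge expectation by $2CM$, giving $k_t^{\{V\}}(\mathbf x,\mathbf y)\geq e^{-2CM}h_t(\mathbf x,\mathbf y)$; combining with the comparison $h_{ct}(\mathbf x,\mathbf y)\leq C'h_t(\mathbf x,\mathbf y)$ (valid for $c\leq 1$ from the two-sided heat kernel bounds and the doubling of $w$) yields (a).

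The main obstacle I anticipate is establishing the $3G$-type inequality in the Dunkl setting. In the Euclidean case the proof completes the square inside a single Gaussian; here $h_t(\mathbf x,\mathbf y)$ aggregates contributions from every image of $\mathbf y$ under the reflection group $G$ and is additionally weighted by the density $w$, so pointwise ratios cannot be compared by a single Gaussian computation. My strategy would be to split the $s$-integral symmetrically at $s=t/2$, use the two-sided heat kernel estimates recalled in Section~\ref{sec:heat} to reduce each of $h_{t-s}$ and $h_s$ to its dominant orbit term, complete the square on the Gaussian factors as in the classical argument, and then control the measure ratios $w(B(\mathbf x,\sqrt s))/w(B(\mathbf x,\sqrt t))$ through doubling. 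The careful bookkeeping across the orbits---rather than any single Gaussian estimate---is what I expect to require the most work beyond Semenov's original Euclidean argument.
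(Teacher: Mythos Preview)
Your arguments for $(a)\Rightarrow(b)$ and $(b)\Rightarrow(c)$ coincide with the paper's (Lemmas~\ref{lem:e_implies_a} and~\ref{lem:a_implies_c}), modulo the routine truncation $V_n=\min(V,n)$ needed to justify the Duhamel identity before passing to the limit.

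For $(c)\Rightarrow(a)$, however, your route diverges from the paper's and has a real gap. The Jensen--bridge argument stands or falls with the \emph{sharp} $3G$-inequality
\[
\int_0^t h_{t-s}(\mathbf x,\mathbf z)\,h_s(\mathbf z,\mathbf y)\,ds \le C\,h_t(\mathbf x,\mathbf y)\bigl(G(\mathbf x,\mathbf z)+G(\mathbf z,\mathbf y)\bigr),
\]
with $h_t$ itself on the right. Your proposed proof---split at $t/2$, invoke the two-sided bounds of Theorem~\ref{teo:1}, complete the square---cannot yield this: the upper bound for $h_{t-s}h_s$ carries the exponential constant $c_u<\tfrac14$ while the lower bound for $h_t$ carries $c_l>\tfrac14$, and after dividing one is left with a factor $\exp\bigl((c_l-c_u)\,d(\mathbf x,\mathbf y)^2/t\bigr)$ that is unbounded. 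What those bounds actually deliver is only Proposition~\ref{propo:glue_together}, namely $h_{t-s}(\mathbf x,\mathbf z)h_s(\mathbf z,\mathbf y)\le C\,h_{c_1 t}(\mathbf x,\mathbf y)\,\mathcal G_{c_1 s}(\mathbf z,\mathbf y)$ with some $c_1>1$. Feeding this into the bridge expectation gives a bound of size $\|\mathcal G(V)\|_{L^\infty}\cdot h_{c_1 t}(\mathbf x,\mathbf y)/h_t(\mathbf x,\mathbf y)$, and since $h_{c_1 t}/h_t$ blows up exponentially in $d(\mathbf x,\mathbf y)^2/t$, Jensen gives nothing off the diagonal.

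The paper avoids ever dividing by $h_t$. The weak estimate of Proposition~\ref{propo:glue_together} is inserted into the Duhamel formula (not Jensen) to obtain $k_t^{\{V\}}\ge h_t-C\|\mathcal G(V)\|_{L^\infty}h_{c_2 t}$; this is a genuine lower bound only when $\|\mathcal G(V)\|_{L^\infty}$ is small \emph{and} $d(\mathbf x,\mathbf y)\lesssim\sqrt t$ (Corollary~\ref{eq:lower_small}). The far-from-diagonal regime is then recovered by a chaining argument along the segment joining $\mathbf x$ to the nearest image of $\mathbf y$ (Proposition~\ref{prop:lower_small}), and the smallness assumption on $\|\mathcal G(V)\|_{L^\infty}$ is removed afterwards by a \emph{different} use of Feynman--Kac: one writes $e^{-\int_0^t V}=\bigl(e^{-\int_0^t V/p}\bigr)^p$ and applies H\"older's inequality under the expectation (Proposition~\ref{propo:V_bounded}). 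Unless you can establish the sharp $3G$-inequality by a method that bypasses the $c_u/c_l$ mismatch---and nothing in your sketch indicates how---your plan will not close without these additional ingredients.
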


{\begin{remark}\label{remark:GG} \normalfont
Condition~\eqref{item:potential_Green_bounded} of Theorem~\ref{teo:main} is equivalent to any of the following ones\textup{:} 
\begin{enumerate}
    \item[(c')]{\begin{equation*} \sup_{\mathbf{x} \in \mathbb{R}^N}\int_0^\infty \int_{\mathbb R^N}V(\mathbf{y})w(B(\mathbf{x},\sqrt{s}))^{-1} e^{-d(\mathbf{x},\mathbf{y})^2/s}\, dw(\mathbf{y})\, ds <\infty, 
   \end{equation*}}
   \item[(c'')]{\begin{equation*} \sup_{\mathbf{x} \in \mathbb{R}^N}\int_0^\infty \int_{\mathbb R^N}V(\mathbf{y})w(B(\mathbf{x},\sqrt{s}))^{-1} e^{-\|\mathbf x-\mathbf y\|^2/s}\, dw(\mathbf{y})\, ds <\infty. 
   \end{equation*} }
\end{enumerate}

The equivalences are proved in Proposition~\ref{prop:Greens}.
\end{remark}
}
The proof of Theorem~\ref{teo:main} depends very much  on the upper and lower bounds for $h_t(\mathbf x,\mathbf y)$ derived in~\cite{DH-heat}. We present them in Subsection \ref{sub:heat_bound}.

\section{Preliminaries}\label{sec:preliminaries}

\subsection{Basic definitions of the Dunkl theory}

In this section we present basic facts concerning the theory of the Dunkl operators.   For more details we refer the reader to~\cite{Dunkl},~\cite{Roesle99},~\cite{Roesler3}, and~\cite{Roesler-Voit}. 

We consider the Euclidean space $\mathbb R^N$ with the scalar product $\langle \mathbf{x},\mathbf y\rangle=\sum_{j=1}^N x_jy_j
$, where $\mathbf x=(x_1,\ldots,x_N)$, $\mathbf y=(y_1,\ldots,y_N)$, and the norm $\| \mathbf x\|^2=\langle \mathbf x,\mathbf x\rangle$.

A {\it normalized root system}  in $\mathbb R^N$ is a finite set  $R\subset \mathbb R^N\setminus\{0\}$ such that $R \cap \alpha \mathbb{R} = \{\pm \alpha\}$,  $\sigma_\alpha (R)=R$, and $\|\alpha\|=\sqrt{2}$ for all $\alpha\in R$, where $\sigma_\alpha$ is defined by \eqref{reflection}. 

The finite group $G$ generated by the reflections $\sigma_{\alpha}$, $\alpha \in R$, is called the {\it reflection group} of the root system. Clearly, $|G|\geq |R|$.

A~{\textit{multiplicity function}} is a $G$-invariant function $k:R\to\mathbb C$ which will be fixed and $> 0$  throughout this paper.  

Recall that $\mathbf{N}=N+\sum_{\alpha \in R}k(\alpha)$. Then, 
\begin{align*} w(B(t\mathbf x, tr))=t^{\mathbf{N}}w(B(\mathbf x,r)) \ \ \text{\rm for all } \mathbf x\in\mathbb R^N, \ t,r>0,
\end{align*}
where $w$ is the associated measure defined in \eqref{eq:measure_w}. 
Observe that there is a constant $C>0$ such that for all $\mathbf{x} \in \mathbb{R}^N$ and $r>0$ we have
\begin{equation}\label{eq:balls_asymp}
C^{-1}w(B(\mathbf x,r))\leq  r^{N}\prod_{\alpha \in R} (|\langle \mathbf x,\alpha\rangle |+r)^{k(\alpha)}\leq C w(B(\mathbf x,r)),
\end{equation}
so $dw(\mathbf x)$ is doubling, that is, there is a constant $C>0$ such that
\begin{equation}\label{eq:doubling} w(B(\mathbf x,2r))\leq C w(B(\mathbf x,r)) \ \ \text{ for all } \mathbf x\in\mathbb R^N, \ r>0.
\end{equation}
Let us also remark the the sets of measure zero with respect to the measure $dw(\mathbf x)$ and the Lebesgue measure $d\mathbf x$ coincide.  

For $\xi \in \mathbb{R}^N$, the {\it Dunkl operators} $T_\xi$  are the following $k$-deformations of the directional derivatives $\partial_\xi$ by   difference operators:
\begin{align*}
     T_\xi f(\mathbf x)= \partial_\xi f(\mathbf x) + \sum_{\alpha\in R} \frac{k(\alpha)}{2}\langle\alpha ,\xi\rangle\frac{f(\mathbf x)-f(\sigma_\alpha(\mathbf{x}))}{\langle \alpha,\mathbf x\rangle}.
\end{align*}
The Dunkl operators $T_{\xi}$, which were introduced in~\cite{Dunkl}, commute and are skew-symmetric with respect to the $G$-invariant measure $dw$. Let us denote $T_j=T_{e_j}$, where $\{e_j\}_{1 \leq j \leq N}$ is a canonical orthonormal  basis of $\mathbb{R}^N$.

\subsection{Dunkl kernel and Dunkl transform}

For fixed $\mathbf y\in\mathbb R^N$ the {\it Dunkl kernel} $E(\mathbf x,\mathbf y)$ is a unique analytic solution to the system
\begin{align*}
    T_\xi f=\langle \xi,\mathbf y\rangle f, \ \ f(0)=1.
\end{align*}
The function $E(\mathbf x ,\mathbf y)$, which generalizes the exponential  function $e^{\langle \mathbf x,\mathbf y\rangle}$, has a  unique extension to a holomorphic function on $\mathbb C^N\times \mathbb C^N$.

The \textit{Dunkl transform} is defined by
  \begin{equation}\label{eq:transform}\mathcal F f(\xi)={\boldsymbol c}_k^{-1}\int_{\mathbb R^N} E(-i\xi, \mathbf x)f(\mathbf x)\, dw(\mathbf x),
  \end{equation}
  where
  \begin{align*}
      {\boldsymbol c}_k=\int_{\mathbb{R}^N}e^{-\frac{\|\mathbf{x}\|^2}{2}}\,dw(\mathbf{x})>0,
  \end{align*}
for $f\in L^1(dw)$ and $\xi \in \mathbb{R}^N$. It was introduced in~\cite{D5} for $k \geq 0$ and further studied in~\cite{dJ1}. It was proved in~\cite[Corollary 2.7]{D5} (see also~\cite[Theorem 4.26]{dJ1}) that it is an isometry on $L^2(dw)$, i.e.,
   \begin{equation}\label{eq:Plancherel}
       \|f\|_{L^2(dw)}=\|\mathcal{F}f\|_{L^2(dw)} \text{ for all }f \in L^2(dw).
   \end{equation}

\subsection{Dunkl Laplacian and Dunkl heat semigroup}\label{sec:heat} The {\it Dunkl Laplacian} associated with $R$ and $k$  is the differential-difference operator $\Delta_k=\sum_{j=1}^N T_{j}^2$, which  acts on $C^2(\mathbb{R}^N)$-functions by\begin{align*}
    \Delta_k f(\mathbf x)=\Delta_{\rm eucl} f(\mathbf x)+\sum_{\alpha\in R} k(\alpha) \delta_\alpha f(\mathbf x),
    \quad
    \delta_\alpha f(\mathbf x)=\frac{\partial_\alpha f(\mathbf x)}{\langle \alpha , \mathbf x\rangle} - \frac{\|\alpha\|^2}{2} \frac{f(\mathbf x)-f(\sigma_\alpha (\mathbf x))}{\langle \alpha, \mathbf x\rangle^2}.
\end{align*}
The operator $\Delta_k$ is essentially self-adjoint on $L^2(dw)$ (see for instance \cite[Theorem 3.1]{Amri}) and generates a semigroup $\{H_t\}_{t>0}$  of linear self-adjoint contractions on $L^2(dw)$. The semigroup has the form
  \begin{equation}\label{eq:H}
  H_t f(\mathbf x)=\int_{\mathbb R^N} h_t(\mathbf x,\mathbf y)f(\mathbf y)\, dw(\mathbf y),
  \end{equation}
  where the heat kernel
  \begin{equation}\label{eq:heat_formula}
      h_t(\mathbf x,\mathbf y)={\boldsymbol c}_k^{-1} (2t)^{-\mathbf{N}/2}E\Big(\frac{\mathbf x}{\sqrt{2t}}, \frac{\mathbf y}{\sqrt{2t}}\Big)e^{-(\| \mathbf x\|^2+\|\mathbf y\|^2)\slash (4t)}
  \end{equation}
  is a $C^\infty$-function of the all variables $\mathbf x,\mathbf y \in \mathbb{R}^N$, $t>0$, and satisfies \begin{equation}\label{eq:positive} 0<h_t(\mathbf x,\mathbf y)=h_t(\mathbf y,\mathbf x),
  \end{equation}
  \begin{equation}
      \label{eq:probabilistic}
      \int_{\mathbb{R}^N} h_t(\mathbf x,\mathbf y)\, dw(\mathbf y)=1. 
  \end{equation}

The following specific formula for the Dunkl heat kernel was obtained by R\"osler \cite{Roesler2003}:
\begin{equation}\label{heat:Rosler}
h_t(\mathbf x,\mathbf y)={\boldsymbol c}_k^{-1}2^{-\mathbf{N}/2} t^{-\mathbf{N}/2}\int_{\mathbb R^N}  \exp(-A(\mathbf x, \mathbf y, \eta)^2/4t)\,d\mu_{\mathbf{x}}(\eta)\text{ for all }\mathbf{x},\mathbf{y}\in\mathbb{R}^N, 
 t>0.
\end{equation}
Here
\begin{equation}\label{eq:A}
A(\mathbf{x},\mathbf{y},\eta)=\sqrt{{\|}\mathbf{x}{\|}^2+{\|}\mathbf{y}{\|}^2-2\langle \mathbf{y},\eta\rangle}=\sqrt{{\|}\mathbf{x}{\|}^2-{\|}\eta{\|}^2+{\|}\mathbf{y}-\eta{\|}^2}
\end{equation}
and $\mu_{\mathbf{x}}$ is a probability measure,
which is supported in the convex hull  $\operatorname{conv}\mathcal{O}(\mathbf{x})$ of the orbit  $\mathcal O(\mathbf x) =\{\sigma(\mathbf x): \sigma \in G\}$.

\subsection{Upper and lower heat kernel bounds}\label{sub:heat_bound}

The closures of connected components of
\begin{equation*}
    \{\mathbf{x} \in \mathbb{R}^{N}\;:\; \langle \mathbf{x},\alpha\rangle \neq 0 \text{ for all }\alpha \in R\}
\end{equation*}
are called (closed) \textit{Weyl chambers}.

\begin{lemma}\label{lem:realization_of_d} 
Fix $\mathbf x,\mathbf y\in\mathbb R^N$ and $\sigma \in G$. Then  $d(\mathbf x,\mathbf y)=\| \mathbf{x}-\sigma(\mathbf y)\|$ if and only if  $\sigma(\mathbf y)$ and $\mathbf x$ belong to the same Weyl chamber. 
\end{lemma}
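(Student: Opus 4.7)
The plan is to exploit the following elementary reflection identity. For any $\alpha\in R$ (so $\|\alpha\|^2=2$) and any $\mathbf{x},\mathbf{z}\in\mathbb{R}^N$, expanding $\sigma_\alpha(\mathbf{z})=\mathbf{z}-\langle\mathbf{z},\alpha\rangle\alpha$ gives
\[
\|\mathbf{x}-\sigma_\alpha(\mathbf{z})\|^2-\|\mathbf{x}-\mathbf{z}\|^2 \;=\; 2\,\langle\mathbf{x},\alpha\rangle\,\langle\mathbf{z},\alpha\rangle.
\]
So reflecting $\mathbf{z}$ across the wall $H_\alpha=\alpha^\perp$ \emph{strictly decreases} its distance to $\mathbf{x}$ when $\mathbf{x}$ and $\mathbf{z}$ lie on strictly opposite sides of $H_\alpha$, and \emph{does not increase} it when they lie on a common closed side. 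Both implications of the lemma will be driven by this one identity.

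For the direction ($\Rightarrow$), I would argue by contraposition. If $\mathbf{x}$ and $\sigma(\mathbf{y})$ share no common closed Weyl chamber, then since the closed chambers tile $\mathbb{R}^N$ and any two distinct chambers are separated by at least one hyperplane $H_\alpha$, there exists $\alpha\in R$ such that $\langle\mathbf{x},\alpha\rangle$ and $\langle\sigma(\mathbf{y}),\alpha\rangle$ have strictly opposite signs. Applying the identity with $\mathbf{z}=\sigma(\mathbf{y})$ yields $\|\mathbf{x}-\sigma_\alpha\sigma(\mathbf{y})\|<\|\mathbf{x}-\sigma(\mathbf{y})\|$, which contradicts the minimality $d(\mathbf{x},\mathbf{y})=\|\mathbf{x}-\sigma(\mathbf{y})\|$.

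For ($\Leftarrow$), fix an arbitrary $\tau\in G$; I must show $\|\mathbf{x}-\sigma(\mathbf{y})\|\leq\|\mathbf{x}-\tau(\mathbf{y})\|$. Pick a closed Weyl chamber $\overline{C}$ containing both $\mathbf{x}$ and $\sigma(\mathbf{y})$, and let $R^{+}$ be the associated positive system, so that $\overline{C}=\{\mathbf{z}:\langle\mathbf{z},\alpha\rangle\geq 0\text{ for all }\alpha\in R^{+}\}$. Starting from $\mathbf{z}_0=\tau(\mathbf{y})$, as long as $\mathbf{z}_k\notin\overline{C}$ I would choose some $\alpha_k\in R^{+}$ with $\langle\mathbf{z}_k,\alpha_k\rangle<0$ and set $\mathbf{z}_{k+1}=\sigma_{\alpha_k}(\mathbf{z}_k)$; since $\langle\mathbf{x},\alpha_k\rangle\geq 0$, the identity gives $\|\mathbf{x}-\mathbf{z}_{k+1}\|\leq\|\mathbf{x}-\mathbf{z}_k\|$. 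The procedure terminates in finitely many steps (for instance, by restricting $\alpha_k$ to the simple roots, each step strictly reduces the Coxeter length of the group element carrying $\mathbf{y}$ to $\mathbf{z}_k$; alternatively, one uses that the orbit $\mathcal{O}(\mathbf{y})$ is finite and the process cannot cycle). The terminal point $\mathbf{z}_K\in\overline{C}\cap\mathcal{O}(\mathbf{y})$ must coincide with $\sigma(\mathbf{y})$ because $\overline{C}$ is a strict fundamental domain for $G$, so it meets each $G$-orbit in exactly one point.

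The main obstacle I anticipate is a clean execution of the two classical Coxeter-theoretic facts invoked in the iteration: termination of the reflection sequence and the uniqueness statement $|\overline{C}\cap\mathcal{O}(\mathbf{y})|=1$. Both are standard for finite reflection groups but require either a precise citation or a brief combinatorial verification via the length function associated with a choice of simple roots in $R^{+}$. A minor technical point is that $\mathbf{x}$ may lie on a wall and thus belong to several closed chambers; the hypothesis of ($\Leftarrow$) nonetheless supplies a specific $\overline{C}$ containing both $\mathbf{x}$ and $\sigma(\mathbf{y})$, which is all the argument needs.
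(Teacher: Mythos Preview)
Your argument is correct and is essentially the classical one: the paper itself gives no proof at all beyond the citation ``See \cite[Chapter VII, proof of Theorem 2.12]{Helgason},'' and Helgason's argument is precisely the reflection--identity computation you wrote down, followed by the fundamental--domain fact $|\overline{C}\cap\mathcal{O}(\mathbf y)|=1$. So there is no substantive difference in approach; you have simply unpacked what the paper leaves to the reference.

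One small remark on the contrapositive step in ($\Rightarrow$): the claim ``if $\mathbf x$ and $\sigma(\mathbf y)$ share no closed chamber then some $H_\alpha$ strictly separates them'' is true but is not quite as immediate as ``distinct chambers are separated by a wall,'' because $\mathbf x$ may lie on several walls. The clean way to see it is to pick a generic point $\mathbf p$ on the open segment $(\mathbf x,\sigma(\mathbf y))$; if no $\alpha$ gives strictly opposite signs, then for every $\alpha$ the sign of $\langle\mathbf p,\alpha\rangle$ agrees (weakly) with both endpoints, so both endpoints lie in the closed chamber of $\mathbf p$. This is again standard, and your acknowledgement that the two Coxeter-theoretic facts (termination via length reduction and uniqueness of the orbit representative in $\overline{C}$) need a citation or short verification is exactly right.
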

\begin{proof}
See \cite[Chapter VII, proof of Theorem 2.12]{Helgason}. 
\end{proof}

For a finite  sequence $\boldsymbol \alpha  =(\alpha_1,\alpha_2,\ldots,\alpha_m)$ of elements of $R$, $\mathbf x,\mathbf y\in \mathbb R^N$ and $t>0$,  let 

\begin{equation}
    \ell(\mathbf{\boldsymbol \alpha}):=m 
\end{equation}
be the length of $\boldsymbol \alpha$, 
\begin{equation}
    \sigma_{\boldsymbol \alpha}:=\sigma_{\alpha_m}\circ \sigma_{\alpha_{m-1}}\circ \ldots\circ\sigma_{\alpha_1}, 
\end{equation}
and 
 \begin{equation}\label{eq:rho}\begin{split}
    &\rho_{\boldsymbol \alpha}(\mathbf{x},\mathbf{y},t)\\&:=\left(1+\frac{\|\mathbf{x}-\mathbf{y}\|}{\sqrt{t}}\right)^{-2}\left(1+\frac{\|\mathbf{x}-\sigma_{\alpha_1}(\mathbf{y})\|}{\sqrt{t}}\right)^{-2}\left(1+\frac{\|\mathbf{x}-\sigma_{\alpha_2} \circ \sigma_{\alpha_1}(\mathbf{y})\|}{\sqrt{t}}\right)^{-2}\times \ldots  \\&\times \left(1+\frac{\|\mathbf{x}-\sigma_{\alpha_{m-1}} \circ \ldots \circ \sigma_{\alpha_1}(\mathbf{y})\|}{\sqrt{t}}\right)^{-2}.
    \end{split}
\end{equation}

For $\mathbf x,\mathbf y\in\mathbb R^N$, let 
    $ n (\mathbf x,\mathbf y)=0$ if $  d(\mathbf x,\mathbf y)=\| \mathbf x-\mathbf y\|$ and  
    
    \begin{equation}\label{eq:n}
        n(\mathbf x,\mathbf y) = 
    \min\{m\in\mathbb Z: d(\mathbf x,\mathbf y)=\| \mathbf x-\sigma_{\alpha_{m}}\circ \ldots \circ \sigma_{\alpha_2}\circ\sigma_{\alpha_1}(\mathbf y)\|,\quad \alpha_j\in R\}     
    \end{equation}
otherwise. In other words, $n(\mathbf x,\mathbf y)$ is the smallest number of reflections $\sigma_\alpha$ which are needed to move $\mathbf y$ to the (closed) Weyl chamber of $\mathbf x$. 
We also allow  $\boldsymbol{\alpha}$ to be the empty sequence, denoted by $\boldsymbol{\alpha} =\emptyset$. Then for $\boldsymbol{\alpha}=\emptyset$, we set:   $\sigma_{\boldsymbol{\alpha}}=I$ (the identity operator), $\ell(\boldsymbol{\alpha})=0$, and $\rho_{\boldsymbol{\alpha}}(\mathbf{x},\mathbf{y},t)=1$ for all $\mathbf{x},\mathbf{y} \in \mathbb{R}^N$ and $t>0$. 
   
We say that  a finite  sequence $\boldsymbol \alpha  =(\alpha_1,\alpha_2,\ldots,\alpha_m)$ of  roots is {\it admissible for the pair}  $(\mathbf x,\mathbf y)\in\mathbb R^N\times\mathbb R^N$  if $n(\mathbf{x},\sigma_{\boldsymbol{\alpha}}(\mathbf{y}))=0$. In other words, the composition $\sigma_{\alpha_m}\circ\sigma_{\alpha_{m-1}}\circ \ldots\circ \sigma_{\alpha_1}$ of the reflections $\sigma_{\alpha_j}$ maps $\mathbf y$ to the Weyl chamber of $\mathbf x$. 
The set of the all admissible sequences $\boldsymbol \alpha$ for the pair  $(\mathbf x,\mathbf y)$  will be denoted by $\mathcal A(\mathbf x,\mathbf y)$.  
Note that if $n(\mathbf x,\mathbf y)=0$, then $\boldsymbol{\alpha}=\emptyset \in \mathcal A(\mathbf{x},\mathbf{y})$.

Let us define
\begin{equation}\label{eq:Lambda_def}
    \Lambda (\mathbf x,\mathbf y,t):=\sum_{\boldsymbol \alpha \in \mathcal{A}(\mathbf{x},\mathbf{y}), \;\ell (\boldsymbol \alpha) \leq 2|G|} \rho_{\boldsymbol \alpha}(\mathbf x,\mathbf y,t).
\end{equation} 
Note that for any $c>1$ and for all $\mathbf{x},\mathbf{y} \in \mathbb{R}^N$ and $t>0$ we have
\begin{equation}\label{eq:Lambda_2t}
c^{-2|G|} \Lambda(\mathbf x,\mathbf y,ct)\leq  \Lambda(\mathbf x,\mathbf y,t)\leq \Lambda(\mathbf x,\mathbf y,ct).
\end{equation}

The following upper and lower bounds for $h_t(\mathbf x,\mathbf y)$ were proved in \cite{DH-heat}.

\begin{theorem}\label{teo:1}
Assume that $0<c_{u}<1/4$ and $c_l>1/4$. There are constants $C_{u},C_{l}>0$ such that for all $\mathbf{x},\mathbf{y} \in \mathbb{R}^N$ and $t>0$ we have 
\begin{equation}\label{eq:main_lower}
 C_{l}w(B(\mathbf{x},\sqrt{t}))^{-1}e^{-c_{l}\frac{d(\mathbf{x},\mathbf{y})^2}{t}} \Lambda(\mathbf x,\mathbf y,t) \leq    h_t(\mathbf{x},\mathbf{y}), 
\end{equation}
\begin{equation}\label{eq:main_claim}
    h_t(\mathbf{x},\mathbf{y}) \leq C_{u}w(B(\mathbf{x},\sqrt{t}))^{-1}e^{-c_{u}\frac{d(\mathbf{x},\mathbf{y})^2}{t}} \Lambda(\mathbf x,\mathbf y,t).
\end{equation}
\end{theorem}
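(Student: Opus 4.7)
The starting point is R\"osler's representation \eqref{heat:Rosler}. The first elementary observation is the pointwise lower bound $A(\mathbf{x},\mathbf{y},\eta) \geq d(\mathbf{x},\mathbf{y})$ for every $\eta \in \operatorname{conv}\mathcal{O}(\mathbf{x})$: writing $\eta = \sum \lambda_i \sigma_i(\mathbf{x})$ as a convex combination of orbit points, $\langle \mathbf{y},\eta\rangle \leq \max_i \langle \mathbf{y},\sigma_i(\mathbf{x})\rangle$, so $A^2 \geq \min_i \|\sigma_i(\mathbf{x})-\mathbf{y}\|^2 = d(\mathbf{x},\mathbf{y})^2$. For \eqref{eq:main_claim} I would use this to factor out $e^{-c_u d(\mathbf{x},\mathbf{y})^2/t}$ for any $c_u < 1/4$, keeping a small residual Gaussian $e^{-(1/4-c_u)A^2/t}$ inside the $\mu_{\mathbf{x}}$-integral; the remaining task is then to estimate the residual by $w(B(\mathbf{x},\sqrt{t}))^{-1}\Lambda(\mathbf{x},\mathbf{y},t)\cdot t^{\mathbf{N}/2}$. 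Likewise, the lower bound \eqref{eq:main_lower} reduces to producing a matching lower estimate for this residual after factoring out $e^{-c_l d(\mathbf{x},\mathbf{y})^2/t}$ with $c_l > 1/4$ (the extra slack $c_l - 1/4$ is used precisely to absorb the fact that $A^2$ can slightly exceed $d^2$ on the bulk of $\operatorname{supp}\mu_{\mathbf{x}}$).

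To produce the volume factor I would invoke \eqref{eq:balls_asymp}, which expresses $w(B(\mathbf{x},\sqrt{t}))$ as $t^{N/2}\prod_{\alpha\in R}(|\langle\mathbf{x},\alpha\rangle|+\sqrt{t})^{k(\alpha)}$ up to constants. Using the second form $A^2=\|\mathbf{x}\|^2-\|\eta\|^2+\|\mathbf{y}-\eta\|^2$ in \eqref{eq:A}, I would localize the $\mu_{\mathbf{x}}$-integral to shrinking neighborhoods of each vertex $\sigma(\mathbf{x})$ of $\operatorname{conv}\mathcal{O}(\mathbf{x})$. The key quantitative input is a two-sided estimate for the mass that R\"osler's measure assigns to such neighborhoods in a form that mirrors the ball-volume factorization \eqref{eq:balls_asymp} along each reflection hyperplane coordinate. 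Granting such an estimate, substituting it into the residual Gaussian and telescoping along a chain of reflections simultaneously produces the $w(B(\mathbf{x},\sqrt{t}))^{-1}$ factor and the successive polynomial corrections $\left(1+\|\mathbf{x}-\sigma_{\alpha_j}\circ\cdots\circ\sigma_{\alpha_1}(\mathbf{y})\|/\sqrt{t}\right)^{-2}$ constituting $\rho_{\boldsymbol{\alpha}}$; summing over admissible $\boldsymbol{\alpha}$ with $\ell(\boldsymbol{\alpha})\leq 2|G|$ reconstitutes $\Lambda(\mathbf{x},\mathbf{y},t)$. The truncation at $2|G|$ is natural since any element of $G$ is a product of at most $|G|$ reflections; the extra factor of two accommodates different reflection paths realizing the same endpoint and guarantees that no term dropped is larger than what is kept.

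For \eqref{eq:main_lower} the same strategy runs in reverse: for each admissible $\boldsymbol{\alpha}$, exhibit a definite subset of $\operatorname{supp}\mu_{\mathbf{x}}$ near the corresponding minimizer of $A^2$ on which $A^2 \lesssim d(\mathbf{x},\mathbf{y})^2 + t$ and $\mu_{\mathbf{x}}$ is controlled from below; this contributes a term comparable to $w(B(\mathbf{x},\sqrt{t}))^{-1}\rho_{\boldsymbol{\alpha}}(\mathbf{x},\mathbf{y},t) e^{-c_l d(\mathbf{x},\mathbf{y})^2/t}$, and summation over $\boldsymbol{\alpha}$ yields \eqref{eq:main_lower}. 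The main obstacle is precisely the required two-sided control of the R\"osler measure $\mu_{\mathbf{x}}$: it is defined only implicitly and is delicate near the boundary of the orbit polytope, which is exactly the regime where $\mathbf{x}$ lies close to a reflection hyperplane, the weight $w$ degenerates, and the quantities $|\langle\mathbf{x},\alpha\rangle|+\sqrt{t}$ appearing in both \eqref{eq:balls_asymp} and each factor of $\rho_{\boldsymbol{\alpha}}$ must be tracked exactly. Obtaining such estimates uniformly in $\mathbf{x},\mathbf{y},t$ and sharp enough to yield both \eqref{eq:main_lower} and \eqref{eq:main_claim} simultaneously is the technical core of the argument and is the substantial content of \cite{DH-heat}.
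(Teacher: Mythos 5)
Your proposal is not matched against an in-paper argument, because this paper does not prove Theorem~\ref{teo:1} at all: it is imported verbatim from the authors' companion work \cite{DH-heat}, and the present text only records consequences (Remark after the theorem, Theorem~\ref{teo:holder_est}, Lemma~\ref{lem:small_deformation}). Judged as a standalone proof, your sketch has a genuine gap at exactly the point you flag and then ``grant'': the two-sided, quantitative control of R\"osler's representing measure $\mu_{\mathbf{x}}$ near the vertices and faces of $\operatorname{conv}\mathcal{O}(\mathbf{x})$, uniform in $\mathbf{x}$ and at scale $\sqrt{t}$, strong enough to produce simultaneously the volume factor $w(B(\mathbf{x},\sqrt{t}))^{-1}$ and the chain products $\rho_{\boldsymbol{\alpha}}$. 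No such estimates are available: what is known about $\mu_{\mathbf{x}}$ in general is qualitative (it is a probability measure supported in $\operatorname{conv}\mathcal{O}(\mathbf{x})$, with support characterizations as in Gallardo--Rejeb and Jiu--Li), and the cited proof in \cite{DH-heat} does not proceed by estimating $\mu_{\mathbf{x}}$; it works instead with analytic comparison lemmas relating $h_t(\mathbf{x},\mathbf{y})$ and $h_t(\mathbf{x},\sigma_\alpha(\mathbf{y}))$, iterated along admissible chains of reflections, on top of the cruder two-sided Gaussian bounds of \cite{ADzH}. So the ``key quantitative input'' of your plan is not a technical lemma one can quote; it is an open substitute for the actual argument, and deferring it to \cite{DH-heat} is circular since that paper avoids it.

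Two further points. First, the parts of your sketch that are solid --- the inequality $A(\mathbf{x},\mathbf{y},\eta)\geq d(\mathbf{x},\mathbf{y})$ on $\operatorname{conv}\mathcal{O}(\mathbf{x})$ and the splitting $e^{-A^2/4t}\leq e^{-c_u d(\mathbf{x},\mathbf{y})^2/t}e^{-(1/4-c_u)A^2/t}$ --- only reproduce the known estimate $h_t(\mathbf{x},\mathbf{y})\lesssim w(B(\mathbf{x},\sqrt{t}))^{-1}e^{-c_u d(\mathbf{x},\mathbf{y})^2/t}$ (this is \eqref{eq:simple_u_l}, i.e.\ \cite[Theorems 4.1 and 4.4]{ADzH}); the whole content of Theorem~\ref{teo:1} is the extra factor $\Lambda(\mathbf{x},\mathbf{y},t)$, and your sketch gives no mechanism forcing the specific structure of $\rho_{\boldsymbol{\alpha}}$ --- why the exponent $-2$ in each factor $\bigl(1+\|\mathbf{x}-\sigma_{\alpha_j}\circ\cdots\circ\sigma_{\alpha_1}(\mathbf{y})\|/\sqrt{t}\bigr)^{-2}$, why the sum runs over admissible sequences, and why truncation at length $2|G|$ loses nothing (the justification of the truncation is itself a separate combinatorial argument, given in the Remark following the theorem via \eqref{eq:lambda_lambda}, not a consequence of ``no dropped term is larger than what is kept''). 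Second, for the lower bound the slack $c_l-1/4$ cannot by itself ``absorb'' the excess of $A^2$ over $d^2$: on most of $\operatorname{supp}\mu_{\mathbf{x}}$ the quantity $A^2$ is of order $\|\mathbf{x}\|^2+\|\mathbf{y}\|^2$, not $d(\mathbf{x},\mathbf{y})^2+t$, so a lower bound requires showing that $\mu_{\mathbf{x}}$ charges the small region where $A^2\lesssim d^2+t$ with mass comparable to $t^{\mathbf{N}/2}w(B(\mathbf{x},\sqrt{t}))^{-1}\Lambda(\mathbf{x},\mathbf{y},t)$ --- which is again precisely the unproven measure estimate. As it stands, the proposal is a plausible heuristic reduction, not a proof, and its central step would fail without input that the literature does not provide.
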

{ 
\begin{remark}\normalfont
In Theorem~\ref{teo:1}, we can replace $\Lambda(\mathbf{x},\mathbf{y},t)$ by the function 
  \begin{equation}\label{eq:Lambda_def1}
    \tilde \Lambda (\mathbf x,\mathbf y,t):=\sum_{\boldsymbol \alpha \in \mathcal{A}(\mathbf{x},\mathbf{y}), \;\ell (\boldsymbol \alpha) \leq |G|} \rho_{\boldsymbol \alpha}(\mathbf x,\mathbf y,t).
\end{equation}
Indeed,  $\tilde \Lambda (\mathbf x,\mathbf y,t) \leq  \Lambda (\mathbf x,\mathbf y,t)$ for all $\mathbf{x},\mathbf{y} \in \mathbb{R}^N$ and $t>0$. We turn to prove
\begin{equation}\label{eq:lambda_lambda}
    \Lambda (\mathbf x,\mathbf y,t) \leq |R|^{2|G|}\tilde \Lambda (\mathbf x,\mathbf y,t).
\end{equation}
To this end, fix $\mathbf{x},\mathbf{y} \in \mathbb{R}^N$, $t>0$, and take $\boldsymbol \beta \in \mathcal {A}(\mathbf x,\mathbf y)$ of the minimal length $\ell(\boldsymbol\beta)$ which satisfies $\ell (\boldsymbol \beta)\leq 2|G|$, and  
\begin{equation}\label{eq:max_Lambda} \rho_{\boldsymbol \beta} (\mathbf x,\mathbf y,t)=\max_{\boldsymbol \alpha \in \mathcal{A}(\mathbf{x},\mathbf{y}), \;\ell (\boldsymbol \alpha) \leq 2|G|}\rho_{\boldsymbol \alpha}(\mathbf x,\mathbf y,t).  
\end{equation}
Obviously, $\Lambda(\mathbf x,\mathbf y,t)\leq |R|^{2|G|}\rho_{\boldsymbol \beta}(\mathbf x,\mathbf y,t)$. If $|\boldsymbol{\beta}| \leq |G|$, then~\eqref{eq:lambda_lambda} is proved. If $m=|\boldsymbol{\beta}|>|G|$, then let us consider the sequence 
\begin{equation}\label{eq:sequence}
    I, \sigma_{\beta_1}, \sigma_{\beta_2}\circ \sigma_{\beta_1}, \dots , \sigma_{\beta_{m-1}}\circ \dots\circ \sigma_{\beta_1}.
\end{equation}
Since there are $|\boldsymbol{\beta}|>|G|$ elements in the sequence~\eqref{eq:sequence}, at least two of them coincide.  

Assume first that for some $j,s \in \{1,2,\ldots,m-1\}$, $j<s$, we have
\begin{align*}
     \sigma_{\beta_j}\circ \sigma_{\beta_{j-1}}\circ \dots\circ  \sigma_{\beta_1}= \sigma_{\beta_s}\circ\dots \circ \sigma_{\beta_j} \circ  \sigma_{\beta_{j-1}}\circ \ldots \circ\sigma_{\beta_1}\ne I. 
\end{align*}
Set $\widetilde{\boldsymbol \beta}=(\beta_1,\beta_2,\ldots,\beta_j,\beta_{s+1},\ldots,\beta_m).$ Then $\widetilde{\boldsymbol \beta}\in\mathcal A(\mathbf x,\mathbf y)$, $\ell (\widetilde{\boldsymbol\beta})<\ell (\boldsymbol \beta)$, and 
$ \rho_{\boldsymbol \beta}(\mathbf x,\mathbf y,t)\leq \rho_{\widetilde{\boldsymbol \beta}}(\mathbf x,\mathbf y,t)$. 

If there is $s\in \{1,2,\ldots,m-1\}$ such that  $\sigma_{\beta_s}\circ\dots \circ \sigma_{\beta_j} \circ  \sigma_{\beta_{j-1}}\circ \ldots \circ \sigma_{\beta_1}=I$, then we set $\widetilde{\boldsymbol\beta}=(\beta_{s+1},\ldots,\beta_m)$ and argue as above. Thus \eqref{eq:Lambda_def1} is established. 
\end{remark}
}

In order to obtain our regularity results, we will need the following theorem proved in~\cite{DH-heat}.

\begin{theorem}[\cite{DH-heat}, Theorem 6.1]\label{teo:holder_est}
There are constants ${C}_4,{c}_4>0$ such that for all $\mathbf{x},\mathbf{y},\mathbf{y}' \in \mathbb{R}^N$ and $t>0$ satisfying $\|\mathbf{y}-\mathbf{y}'\|<\frac{\sqrt{t}}{2}$ we have
\begin{equation}\label{eq:h_t_Holder_d}
    |h_t(\mathbf{x},\mathbf{y})- h_{t}(\mathbf{x},\mathbf{y}')| \leq {C}_4\frac{\|\mathbf{y}-\mathbf{y}'\|}{\sqrt{t}}h_{{c}_4t}(\mathbf{x},\mathbf{y}).
\end{equation}
\end{theorem}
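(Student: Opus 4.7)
The plan is to use R\"osler's integral representation~\eqref{heat:Rosler} directly and differentiate the integrand in the $\mathbf{y}$ variable, exploiting the fact that the measure $\mu_{\mathbf{x}}$ does not depend on $\mathbf{y}$. Concretely, I would write
\begin{equation*}
 h_t(\mathbf{x},\mathbf{y}) - h_t(\mathbf{x},\mathbf{y}') = {\boldsymbol c}_k^{-1} 2^{-\mathbf{N}/2} t^{-\mathbf{N}/2}\int_{\mathbb R^N}\bigl[ e^{-A(\mathbf{x},\mathbf{y},\eta)^2/(4t)} - e^{-A(\mathbf{x},\mathbf{y}',\eta)^2/(4t)}\bigr]\, d\mu_{\mathbf x}(\eta)
\end{equation*}
and apply the mean value theorem to the map $\mathbf{y}\mapsto \exp(-A(\mathbf{x},\mathbf{y},\eta)^2/(4t))$. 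Since $\nabla_{\mathbf{y}}A(\mathbf{x},\mathbf{y},\eta)^2 = 2(\mathbf{y}-\eta)$, there exists $\mathbf{y}^{*}$ on the segment joining $\mathbf{y}$ to $\mathbf{y}'$ such that the integrand is bounded by $(2t)^{-1}\|\mathbf{y}-\mathbf{y}'\|\, \|\mathbf{y}^{*}-\eta\|\, \exp(-A(\mathbf{x},\mathbf{y}^{*},\eta)^2/(4t))$.

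Next I would absorb the factor $\|\mathbf{y}^{*}-\eta\|$ into the Gaussian at the price of enlarging the time parameter, using the elementary inequality $s\,e^{-s^2/(4t)} \leq C\sqrt{t}\,e^{-s^2/(8t)}$ applied to $s = \|\mathbf{y}^{*}-\eta\|$. Because $\|\mathbf{x}\|^2 - \|\eta\|^2 \geq 0$ on $\operatorname{supp}\mu_{\mathbf{x}}\subseteq \operatorname{conv}\mathcal{O}(\mathbf{x})$, the remaining factor $\exp(-(\|\mathbf{x}\|^2-\|\eta\|^2)/(4t))$ is dominated by $\exp(-(\|\mathbf{x}\|^2-\|\eta\|^2)/(4c_4t))$ for any $c_4\geq 1$. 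What remains is to replace $\mathbf{y}^{*}$ by $\mathbf{y}$ inside $\exp(-\|\mathbf{y}^{*}-\eta\|^2/(8t))$; here I would use the quadratic inequality $(a-b)^2\geq \tfrac12 a^2 - b^2$ with $a=\|\mathbf{y}-\eta\|$ and $b=\|\mathbf{y}-\mathbf{y}^{*}\|\leq \|\mathbf{y}-\mathbf{y}'\|<\sqrt{t}/2$, yielding $\|\mathbf{y}^{*}-\eta\|^2 \geq \tfrac12\|\mathbf{y}-\eta\|^2 - t/4$, hence
\begin{equation*}
\exp(-\|\mathbf{y}^{*}-\eta\|^2/(8t)) \leq e^{1/32}\exp(-\|\mathbf{y}-\eta\|^2/(16t)).
\end{equation*}

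Combining these three steps, the full integrand is bounded by a constant multiple of $\frac{\|\mathbf{y}-\mathbf{y}'\|}{\sqrt{t}}\exp(-A(\mathbf{x},\mathbf{y},\eta)^2/(4c_4 t))$ with $c_4=4$ (any $c_4\geq 4$ works). Integrating against $d\mu_{\mathbf{x}}$ and adjusting the prefactor $t^{-\mathbf{N}/2}$ to $(c_4 t)^{-\mathbf{N}/2}$ by absorbing $c_4^{\mathbf{N}/2}$ into the final constant, I recover exactly $h_{c_4 t}(\mathbf{x},\mathbf{y})$ on the right hand side, giving the claim.

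The step I expect to require the most care is the transfer from the mean value point $\mathbf{y}^{*}$ back to $\mathbf{y}$ on the right hand side of the estimate: one must lose a definite constant in the exponential decay rate (from $1/(8t)$ down to $1/(16t)$), and it is precisely this loss that forces the appearance of $h_{c_4 t}$ rather than $h_t$ on the right. The hypothesis $\|\mathbf{y}-\mathbf{y}'\|<\sqrt{t}/2$ is used essentially only at this step, to keep $\|\mathbf{y}-\mathbf{y}^{*}\|^2/t$ bounded by a universal constant; the rest of the argument is structurally insensitive to that specific threshold.
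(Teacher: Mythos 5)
Your argument is correct, but it is not the paper's argument: the paper does not prove this statement at all, it simply quotes it as Theorem 6.1 of \cite{DH-heat}. Your route — writing the difference via R\"osler's representation \eqref{heat:Rosler} (legitimate because $\mu_{\mathbf{x}}$ is independent of $\mathbf{y}$), applying the mean value theorem to $\mathbf{y}\mapsto e^{-A(\mathbf{x},\mathbf{y},\eta)^2/(4t)}$ with $\nabla_{\mathbf y}A^2=2(\mathbf{y}-\eta)$, absorbing $\|\mathbf{y}^*-\eta\|$ via $s e^{-s^2/(4t)}\leq C\sqrt{t}\,e^{-s^2/(8t)}$, and transferring from $\mathbf{y}^*$ back to $\mathbf{y}$ with $(a-b)^2\geq \tfrac12 a^2-b^2$ and $\|\mathbf{y}-\mathbf{y}^*\|\leq\|\mathbf{y}-\mathbf{y}'\|<\sqrt{t}/2$ — is sound in every step; the point where $\mathbf{y}^*$ depends on $\eta$ is harmless since all your bounds are uniform along the segment, and the use of $\|\mathbf{x}\|^2-\|\eta\|^2\geq 0$ on $\operatorname{supp}\mu_{\mathbf{x}}\subseteq\operatorname{conv}\mathcal{O}(\mathbf{x})$ to relax the exponent to $1/(16t)$ is exactly right, yielding $c_4=4$ after rescaling $t^{-\mathbf{N}/2}$ to $(4t)^{-\mathbf{N}/2}$. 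In spirit your proof is a differentiated version of the technique the paper does display for Lemma~\ref{lem:small_deformation} (the alternative proof via \eqref{heat:Rosler} and the comparison of $A(\mathbf{x},\mathbf{y},\eta)$ with $A(\mathbf{x},\mathbf{y}',\eta)$); what your argument buys is a short, self-contained proof of the Lipschitz estimate with explicit constants, making the external citation to \cite{DH-heat} unnecessary for this particular bound, whereas the paper's choice simply defers to the reference where the estimate is established alongside the other heat kernel bounds of Theorem~\ref{teo:1}.
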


We will also need some auxiliary estimates of the generalized heat kernel $h_t(\mathbf{x},\mathbf{y})$.

\begin{lemma}\label{lem:small_deformation}
Assume that $c_0>1$. There is a constant $C_0>0$ such that for all $\mathbf{x},\mathbf{y},\mathbf{y}' \in \mathbb{R}^N$ and $t>0$ satisfying $\|\mathbf{y}-\mathbf{y}'\|< \sqrt{t}$ we have
\begin{equation}\label{eq:small_deformation}
    h_t(\mathbf{x},\mathbf{y}) \leq C_0h_{c_0t}(\mathbf{x},\mathbf{y}').
\end{equation}
\end{lemma}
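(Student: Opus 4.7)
The plan is to work directly from R\"osler's integral representation~\eqref{heat:Rosler} and to compare the two sides of the desired inequality pointwise in the integration variable $\eta$. The key geometric observation is that the probability measure $\mu_{\mathbf{x}}$ is supported in $\operatorname{conv}\mathcal{O}(\mathbf{x})$; since $G$ acts on $\mathbb{R}^N$ by orthogonal transformations, every point of the orbit $\mathcal{O}(\mathbf{x})$ has norm $\|\mathbf{x}\|$, and so any convex combination satisfies $\|\eta\|\leq\|\mathbf{x}\|$. Equivalently, $\|\mathbf{x}\|^2-\|\eta\|^2\geq 0$ on $\operatorname{supp}\mu_{\mathbf{x}}$, which is precisely the extra summand appearing in the identity $A(\mathbf{x},\mathbf{y},\eta)^2=\|\mathbf{x}\|^2-\|\eta\|^2+\|\mathbf{y}-\eta\|^2$ from~\eqref{eq:A}. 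This nonnegativity will let us freely replace $\|\mathbf{y}-\eta\|^2$ by $A(\mathbf{x},\mathbf{y},\eta)^2$ with no error.

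Next I would combine the triangle inequality with Young's inequality $2ab\leq\epsilon a^2+\epsilon^{-1}b^2$ to obtain, for any $\epsilon>0$,
\[
\|\mathbf{y}'-\eta\|^2\leq(1+\epsilon)\|\mathbf{y}-\eta\|^2+(1+\epsilon^{-1})\|\mathbf{y}-\mathbf{y}'\|^2.
\]
Adding $\|\mathbf{x}\|^2-\|\eta\|^2\geq 0$ on both sides and using $1+\epsilon\geq 1$ upgrades this to
\[
A(\mathbf{x},\mathbf{y}',\eta)^2\leq(1+\epsilon)\,A(\mathbf{x},\mathbf{y},\eta)^2+(1+\epsilon^{-1})\,\|\mathbf{y}-\mathbf{y}'\|^2.
\]
The essential choice is $\epsilon=c_0-1>0$, which uses the assumption $c_0>1$ in an unavoidable way; combined with the hypothesis $\|\mathbf{y}-\mathbf{y}'\|^2<t$, this yields the pointwise bound
\[
-\frac{A(\mathbf{x},\mathbf{y},\eta)^2}{4t}\leq -\frac{A(\mathbf{x},\mathbf{y}',\eta)^2}{4c_0 t}+\frac{1}{4(c_0-1)}.
\]

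To finish, I would exponentiate this inequality, integrate against $d\mu_{\mathbf{x}}(\eta)$ (which is the \emph{same} measure for both kernels since it depends only on $\mathbf{x}$), multiply both sides by the prefactor $\boldsymbol c_k^{-1}(2t)^{-\mathbf{N}/2}$, and absorb the prefactor discrepancy through $(2t)^{-\mathbf{N}/2}=c_0^{\mathbf{N}/2}(2c_0 t)^{-\mathbf{N}/2}$ to obtain
\[
h_t(\mathbf{x},\mathbf{y})\leq c_0^{\mathbf{N}/2}\exp\bigl(\tfrac{1}{4(c_0-1)}\bigr)\,h_{c_0 t}(\mathbf{x},\mathbf{y}'),
\]
which is the claim with an explicit constant $C_0=c_0^{\mathbf{N}/2}e^{1/(4(c_0-1))}$. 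I do not expect any genuine obstacle here; the only subtle point is noticing that the support condition $\operatorname{supp}\mu_{\mathbf{x}}\subseteq\operatorname{conv}\mathcal{O}(\mathbf{x})$ turns $\|\mathbf{x}\|^2-\|\eta\|^2$ into a free nonnegative term. In particular, this pointwise argument bypasses Theorem~\ref{teo:1} entirely and avoids the delicate task of comparing the admissible-sequence sums $\Lambda(\mathbf{x},\mathbf{y},t)$ and $\Lambda(\mathbf{x},\mathbf{y}',c_0 t)$, which would be awkward when $\mathbf{y}$ and $\mathbf{y}'$ happen to lie in distinct Weyl chambers.
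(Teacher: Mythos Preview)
Your proof is correct and follows essentially the same route as the paper's: both use R\"osler's formula~\eqref{heat:Rosler}, the nonnegativity of $\|\mathbf{x}\|^2-\|\eta\|^2$ on $\operatorname{supp}\mu_{\mathbf{x}}$, the triangle inequality combined with Young's inequality (with the same choice $\epsilon=c_0-1$), and then integrate pointwise against $d\mu_{\mathbf{x}}$. Your explicit constant $C_0=c_0^{\mathbf{N}/2}e^{1/(4(c_0-1))}$ is exactly what the paper's computation yields as well.
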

\begin{proof}
This is Lemma 6.2 of \cite{DH-heat}. For the convenience of the reader, we present an alternative proof here. Let $\mathbf{y},\mathbf{y}' \in \mathbb{R}^N$ be such that $  \| \mathbf y-\mathbf y'\|\leq \sqrt{t}$. Recall that $ \| \mathbf x\| -\|\mathbf \eta\| \geq 0$ for all $\eta\in \text{\rm conv}\, \mathcal O(\mathbf x )$. Put $\varepsilon=c_0-1$.  We turn to estimate $A(\mathbf{x},\mathbf{y}',\eta)$ defined in~\eqref{eq:A}: 
 \begin{equation}\label{eq:A_comp}
     \begin{split}
         A(\mathbf x,\mathbf y',\eta)^2&=\| \mathbf x\|^2-\| \eta\|^2+\|\mathbf y'-\eta\|^2\\
         & \leq \| \mathbf x\|^2-\| \eta\|^2+(\|\mathbf y'-\mathbf y\|+\|\mathbf y-\eta\|)^2\\
         &\leq  \| \mathbf x\|^2-\| \eta\|^2+\|\mathbf y'-\mathbf y\|^2+\|\mathbf y-\eta\|^2+\varepsilon^{-1} \| \mathbf y'-\mathbf y\|^2+\varepsilon \|\mathbf y-\eta\|^2 \\
         & \leq (1+\varepsilon) (\| \mathbf x\|^2-\| \eta\|^2+\|\mathbf y-\eta\|^2)+ (1+\varepsilon^{-1})t \\
         &=(1+\varepsilon) A(\mathbf x,\mathbf y,\eta)^2+(1+\varepsilon^{-1})t,
     \end{split}
 \end{equation}
 where in the second inequality of~\eqref{eq:A_comp} we have used the inequality $2ab \leq \varepsilon a^2+\varepsilon^{-1}b^2$. 
Using \eqref{eq:A_comp} and the R\"osler formula~\eqref{heat:Rosler}, we get 
\begin{equation}
    \begin{split}
        h_t(\mathbf x,\mathbf y)&= {\boldsymbol c}_k^{-1}2^{-\mathbf{N}/2} t^{-\mathbf{N}/2} 
       \int_{\mathbb{R}^N} e^{-A(\mathbf x,\mathbf y,\eta)^2/4t}\, d\mu_{\mathbf x}(\eta)\\
        &\leq {\boldsymbol c}_k^{-1}2^{-\mathbf{N}/2} t^{-\mathbf{N}/2} \int_{\mathbb{R}^N} e^{-A(\mathbf x,\mathbf y',\eta)^2/4(1+\varepsilon)t } e^{(1+\varepsilon^{-1})/(4(1+\varepsilon))}\, d\mu_{\mathbf x}(\eta)\\
        &=C_{0} h_{c_0t}(\mathbf x,\mathbf y'). 
    \end{split}
\end{equation}
\end{proof}

As a consequence of Lemma~\ref{lem:small_deformation}, we obtain the next lemma, which will be used in the proof of the main theorem.

\begin{lemma}\label{lem:iter_lip}
There are constants $C,c>1$ such that for all $s,t>0$ and $\mathbf{x},\mathbf{x}',\mathbf{y} \in \mathbb{R}^N$ we have
\begin{equation}\label{eq:iter_lip}
\begin{split}
    \int_{\mathbb{R}^N}\Big| h_s(\mathbf x,\mathbf z)-h_s(\mathbf x',\mathbf z)\Big|  h_t(\mathbf{z},\mathbf{y})\,dw(\mathbf{z}) &\leq C \frac{\|\mathbf{x}-\mathbf{x}'\|}{\sqrt{s}}h_{cs+t}(\mathbf{x},\mathbf{y})+ C \frac{\|\mathbf{x}-\mathbf{x}'\|}{\sqrt{s}}h_{cs+t}(\mathbf{x}',\mathbf{y}).
\end{split}
\end{equation}
\end{lemma}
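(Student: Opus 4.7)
The plan is to prove~\eqref{eq:iter_lip} by a dichotomy on the size of $\|\mathbf{x}-\mathbf{x}'\|$ relative to $\sqrt{s}$: the H\"older estimate of Theorem~\ref{teo:holder_est} handles the ``close'' case, while the trivial triangle inequality handles the ``far'' case. The output times of these two strategies do not match the target time $cs+t$, and the key auxiliary ingredient to reconcile them is the following monotonicity, which is immediate from the R\"osler formula~\eqref{heat:Rosler}: for all $0<t_1\le t_2$ and all $\mathbf{x},\mathbf{y}\in\mathbb{R}^N$,
\begin{equation*}
 h_{t_1}(\mathbf{x},\mathbf{y})\le (t_2/t_1)^{\mathbf{N}/2}\,h_{t_2}(\mathbf{x},\mathbf{y}),
\end{equation*}
since enlarging $t$ only enlarges the integrand $e^{-A(\mathbf{x},\mathbf{y},\eta)^2/(4t)}$ pointwise in $\eta$, so the only loss is the prefactor ratio. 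Fix any constant $c>\max\{1,c_4\}$, where $c_4$ is from Theorem~\ref{teo:holder_est}.

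\textbf{Case 1:} $\|\mathbf{x}-\mathbf{x}'\|<\sqrt{s}/2$. Using the symmetry $h_s(\mathbf{x},\mathbf{z})=h_s(\mathbf{z},\mathbf{x})$ from~\eqref{eq:positive}, Theorem~\ref{teo:holder_est} applied in the second variable gives
\begin{equation*}
 \big|h_s(\mathbf{x},\mathbf{z})-h_s(\mathbf{x}',\mathbf{z})\big|\le C_4\,\tfrac{\|\mathbf{x}-\mathbf{x}'\|}{\sqrt{s}}\,h_{c_4 s}(\mathbf{x},\mathbf{z}).
\end{equation*}
Multiplying by $h_t(\mathbf{z},\mathbf{y})$, integrating over $\mathbb{R}^N$, and using the semigroup identity $\int h_{c_4 s}(\mathbf{x},\mathbf{z})h_t(\mathbf{z},\mathbf{y})\,dw(\mathbf{z})=h_{c_4 s+t}(\mathbf{x},\mathbf{y})$ yields a bound by $C_4\tfrac{\|\mathbf{x}-\mathbf{x}'\|}{\sqrt{s}}h_{c_4 s+t}(\mathbf{x},\mathbf{y})$. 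The preliminary monotonicity with $t_1=c_4 s+t$ and $t_2=cs+t$ (for which $t_2/t_1\le c/c_4$) converts this into a constant multiple of $h_{cs+t}(\mathbf{x},\mathbf{y})$, which is the first term on the right side of~\eqref{eq:iter_lip}.

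\textbf{Case 2:} $\|\mathbf{x}-\mathbf{x}'\|\ge\sqrt{s}/2$. The triangle inequality combined with the semigroup identity gives
\begin{equation*}
 \int_{\mathbb{R}^N}\big|h_s(\mathbf{x},\mathbf{z})-h_s(\mathbf{x}',\mathbf{z})\big|\,h_t(\mathbf{z},\mathbf{y})\,dw(\mathbf{z})\le h_{s+t}(\mathbf{x},\mathbf{y})+h_{s+t}(\mathbf{x}',\mathbf{y}).
\end{equation*}
Applying the monotonicity with $t_1=s+t$ and $t_2=cs+t$ (so $t_2/t_1\le c$) to each summand, and then using the case-2 hypothesis in the form $1\le 2\|\mathbf{x}-\mathbf{x}'\|/\sqrt{s}$, produces the full right side of~\eqref{eq:iter_lip}.

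The one genuine obstacle is that neither Theorem~\ref{teo:holder_est} (which returns time $c_4 s+t$) nor the crude triangle bound (which returns time $s+t$) produces the time $cs+t$ on the nose; the Gaussian bounds of Theorem~\ref{teo:1} are too lossy to bridge this gap uniformly in $(s,t)$, because the exponents $c_u<1/4<c_l$ are strict. The one-line R\"osler-based comparison $h_{t_1}\le (t_2/t_1)^{\mathbf N/2}h_{t_2}$ circumvents this precisely because it compares $h$ at two different times pointwise, using only $t_1\le t_2$.
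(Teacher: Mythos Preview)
Your proof is correct and follows the same two-case strategy as the paper: split on $\|\mathbf{x}-\mathbf{x}'\|\gtrless\sqrt{s}/2$, apply Theorem~\ref{teo:holder_est} plus the semigroup identity in the close case, and the triangle inequality plus the semigroup identity in the far case. The one point where you are more careful than the paper is the time adjustment: the paper simply asserts that the far case ``follows by the semigroup property'' and that the constant in Theorem~\ref{teo:holder_est} may be taken $>1$, whereas you make explicit the R\"osler-formula monotonicity $h_{t_1}\le (t_2/t_1)^{\mathbf N/2}h_{t_2}$ that justifies passing from $h_{s+t}$ (resp.\ $h_{c_4s+t}$) to $h_{cs+t}$.
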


\begin{proof}
If $\|\mathbf{x}-\mathbf{x}'\| \geq \frac{\sqrt{s}}{2}$, then~\eqref{eq:iter_lip} follows by the semigroup property of $h_t(\mathbf x,\mathbf y)$. 
Assume that $\|\mathbf{x}-\mathbf{x}'\| < \frac{\sqrt{s}}{2}$.  Theorem \ref{teo:holder_est} asserts  that there are constants $C,c>1$ such that for all $s_1>0$ and $\mathbf{x}_1,\mathbf{x}'_1,\mathbf{z}_1 \in \mathbb{R}^N$ satisfying $\|\mathbf{x}_1-\mathbf{x}'_1\| <\frac{\sqrt{s_1}}{2}$ we have
\begin{equation}
    |h_{s_1}(\mathbf{x}_1,\mathbf{z}_1)-\ h_{s_1}(\mathbf{x}_1',\mathbf{z}_1)| \leq C\frac{\|\mathbf{x}_1-\mathbf{x}_1'\|}{\sqrt{s_1}}h_{cs_1}(\mathbf{x}_1,\mathbf{z}_1).
\end{equation}
Hence, by the semigroup property of the generalized heat semigroup, we obtain
\begin{align*}
     \int_{\mathbb{R}^N} \left| h_s(\mathbf{x},\mathbf{z})-h_s(\mathbf{x}',\mathbf{z})
     \right|
     h_t(\mathbf{z},\mathbf{y})\,dw(\mathbf{z}) &\leq C\frac{\|\mathbf{x}-\mathbf{x}'\|}{\sqrt{s}}\int_{\mathbb{R}^N}h_{cs}(\mathbf{x},\mathbf{z})h_{t}(\mathbf{z},\mathbf{y})\,dw(\mathbf{z})\\&=C\frac{\|\mathbf{x}-\mathbf{x}'\|}{\sqrt{s}}h_{cs+t}(\mathbf{x},\mathbf{y}).
\end{align*}
\end{proof}

\section{Dunkl Schr\"odinger operators with non-negative potentials - introductory results }\label{S-intro}

For a nonnegative potential $V \colon \mathbb{R}^N \longmapsto [0,\infty)$, {$V\in L^1_{\rm loc}(dw)$, let $V_n(x):=\min (V(\mathbf x),n)$, $n=1,2,\dots $. } We consider the quadratic forms  
\begin{equation}\label{eq:QQ}
    Q_\infty(f,g):=\int_{\mathbb{R}^N} \Big(\sum_{j=1}^N T_j f(\mathbf{x})\overline{T_jg(\mathbf{x})} + V(\mathbf{x})f(\mathbf{x})\overline{g(\mathbf{x})}\Big)\, dw(\mathbf{x}),
\end{equation}

\begin{equation}
    Q_n(f,g):=\int_{\mathbb{R}^N} \Big(\sum_{j=1}^N T_j f(\mathbf{x})\overline{T_jg(\mathbf{x})} + V_n(\mathbf{x})f(\mathbf{x})\overline{g(\mathbf{x})}\Big)\, dw(\mathbf{x}), 
\end{equation}
with the domains
\begin{align*}
    \mathcal D(Q_\infty)=\{ f\in L^2(dw): \| \mathbf{x}\|\mathcal F f(\mathbf{x})\in L^2(dw(\mathbf{x}))\  \text{ and }\  \sqrt{V(\mathbf{x})}f(\mathbf{x})\in L^2(dw(\mathbf{x}))\},
\end{align*}
\begin{align*}
     \mathcal D(Q_n)=\{ f\in L^2(dw): \| \mathbf{x}\|\mathcal F f(\mathbf{x})\in L^2(dw(\mathbf{x}))\ \},\quad n=1,2,\dots .
\end{align*}

Observe that $C_c^\infty(\mathbb R^N)$ is a dense subspace of $L^2(dw)$ such that   $C_c^\infty(\mathbb R^N)\subseteq \mathcal D(Q_\infty)\subseteq \mathcal D(Q_n)$.  
The forms $Q_\infty$ and $Q_n$  are non-negative and closed.  So they define  self-adjoint non-negative operators $L_\infty$, $L_n$ respectively: 
\begin{align*}
    \mathcal D(L_n)=\{ f\in \mathcal D(Q_n): |Q_n(f,g)|\leq C_f\| g\|_{L^2(dw)}\quad \text{for all } g\in \mathcal D(Q_n), \}, \quad n=\infty, 1,2,\dots
\end{align*}
and, for $f\in\mathcal D(L_n)$, the operator $L_n$ is defined by the equation 
\begin{align*}
    \int_{\mathbb{R}^N} (L_nf)(\mathbf{x})\bar g(\mathbf{x})\, dw(\mathbf{x})=Q_n(f,g) \quad \text{for all } g\in\mathcal D(Q_n),
\end{align*}
see e.g.~\cite[Theorem 4.12]{Davies}. 
{Moreover, $f\in \mathcal D(Q_\infty)$ if and only if $\lim_{n\to\infty} Q_n(f,f)<\infty$. Further,   $Q_\infty(f,f)=\lim_{n\to\infty} Q_n(f,f)$ and, by the definition of $V_n$, the convergence is monotone. Set $L:=L_\infty$.  
The operator $-L_n$ is the generator of a semigroup of linear contractions on $L^2(dw)$, denoted by $\{e^{-tL_n}\}_{t \geq 0}$ for  $n=1,2,\dots$ and $\{e^{-tL}\}_{t \geq 0}$ for $L=L_\infty$. Let $a>0$. 
Theorem 4.32 of \cite{Davies}
 asserts that 
\begin{equation}\label{eq:basic_convergence} \lim_{n\to\infty} \Big\{ \sup_{0\leq t\leq a}\| e^{-tL_n} f-e^{-tL}f\|_{L^2(dw)} \Big\} =0\quad \text{for all } f\in L^2(dw). 
\end{equation}}

In the forthcoming sections we provide  rigorous proofs of   existence, regularity and bounds for the kernels  of the semigroups $\{e^{-tL_n}\}_{t \geq 0}$, $n=\infty,1,2,\dots $. The  main tools are the following 
product formula and Duhamel formula for  semigroups generated by  perturbations of  generators by  bounded  operators on  Banach spaces which we state as the theorem. 

\begin{theorem}\label{teo:AB}
Let $A$ be a generator of a semigroup $\{e^{tA}\}_{t \geq 0}$ of linear operators on a Banach space $X$\textup{,} and let $B$ be a bounded operator on $X$. Then $A+B$ is a generator of a semigroup of linear operators on $X$,  denoted by $\{e^{t(A+B)}\}_{t \geq 0}$, and for every $x\in X$ one has 
\begin{equation}\label{eq:product-formula}
    e^{t(A+B)} x=\lim_{n\to\infty} \Big(e^{tA/n}e^{tB/n}\Big)^n x\textup{,}
\end{equation}
\begin{equation}
    e^{tA}x  = e^{t(A+B)}x -\int_0^t e^{(t-s)A}B e^{s(A+B)}x\, ds.
\end{equation}
Moreover\textup{,} if the semigroup $\{e^{tA}\}_{t \geq 0}$ is holomorphic\textup{,}  so is $\{e^{t(A+B)}\}_{t \geq 0}$. 
\end{theorem}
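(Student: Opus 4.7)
The plan is to construct the perturbed semigroup directly via the Dyson--Phillips series, which will simultaneously identify its generator and yield the Duhamel formula. Set $S_0(t)=e^{tA}$ and define recursively
\begin{equation*}
   S_{k+1}(t)x := \int_0^t e^{(t-s)A}\, B\, S_k(s)x\, ds, \qquad k \geq 0,
\end{equation*}
then put $T(t) := \sum_{k=0}^\infty S_k(t)$. Using $\|e^{tA}\| \leq Me^{\omega t}$ for suitable $M,\omega$, the routine bound $\|S_k(t)\| \leq M^{k+1}e^{\omega t}(\|B\| t)^k/k!$ shows that the series converges in operator norm locally uniformly in $t$, whence $T(t)$ is a strongly continuous family of bounded operators satisfying $\|T(t)\| \leq Me^{(\omega+M\|B\|)t}$. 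Summing the recursion gives
\begin{equation*}
   T(t)x = e^{tA}x + \int_0^t e^{(t-s)A}\, B\, T(s)x\, ds,
\end{equation*}
which, rearranged, is exactly the Duhamel identity in the statement. Differentiating the series term by term at $t=0$ for $x \in D(A)$ identifies the generator as $A+B$ (note $D(A+B)=D(A)$ since $B$ is bounded), so $T(t)=e^{t(A+B)}$.

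For the Trotter-type product formula \eqref{eq:product-formula} I would invoke Chernoff's theorem. Put $W(t)=e^{tA}e^{tB}$; then $\|W(t)\| \leq Me^{(\omega+\|B\|)t}$ and for $x \in D(A)$ one has $t^{-1}(W(t)x-x)\to (A+B)x$ as $t\to 0^+$. Since $A+B$ has already been shown to generate the semigroup $\{e^{t(A+B)}\}$, Chernoff's theorem yields $W(t/n)^n x \to e^{t(A+B)}x$ strongly, uniformly for $t$ in compact subsets of $[0,\infty)$.

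For the final assertion, suppose $\{e^{tA}\}$ extends holomorphically to a sector $\Sigma_\theta=\{z\neq 0 : |\arg z|<\theta\}$ with $\|e^{zA}\| \leq Me^{\omega\operatorname{Re} z}$ on each proper subsector. Each $S_k$ extends to $\Sigma_\theta$ by integrating along the straight ray from $0$ to $z$, and the analogous Dyson--Phillips estimate $\|S_k(z)\| \leq M^{k+1}e^{\omega \operatorname{Re} z}(\|B\||z|)^k/k!$ produces locally uniform convergence on $\Sigma_\theta$ to an operator-valued holomorphic function, which must agree with $e^{z(A+B)}$ by analytic continuation from the positive real axis.

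The main obstacle I anticipate is the rigorous identification of the generator of $T(t)$ with $A+B$ on the correct domain: differentiating the Dyson--Phillips series term by term at $t=0$ for $x \in D(A)$ requires justifying the exchange of limit and summation, which is handled by dominated convergence using the locally uniform bounds on $S_k$ together with the identity $D(A+B)=D(A)$. Once this is in hand, the semigroup property, strong continuity, the product formula, and the sectorial estimate all fall out of the series representation by standard arguments.
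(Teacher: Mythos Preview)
The paper does not give its own proof of this theorem: it is stated without proof as a standard fact from semigroup theory and then used as a tool (cf.\ the references to Davies and Pazy in the bibliography). So there is no ``paper's own proof'' to compare against; you are supplying an argument where the authors simply cite the result.

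Your approach---Dyson--Phillips series to construct $e^{t(A+B)}$ and obtain the Duhamel identity, then Chernoff's theorem for the product formula, and finally analytic continuation of the series for the holomorphic case---is the standard textbook route and is essentially correct. One small gap worth flagging: in the Chernoff step you assert $\|W(t)\|\leq Me^{(\omega+\|B\|)t}$, but Chernoff's theorem requires the stability estimate $\|W(t)^n\|\leq M e^{n\omega' t}$ uniformly in $n$, and your bound only yields $\|W(t)^n\|\leq M^n e^{n(\omega+\|B\|)t}$, which blows up if $M>1$. The usual fix is to first renorm $X$ so that $\{e^{tA}\}$ becomes a quasi-contraction ($M=1$), or else to bypass Chernoff entirely and deduce the product formula directly from the Trotter--Kato approximation theorem, using that you have already identified $A+B$ as a generator via Dyson--Phillips. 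With that adjustment your sketch is complete.
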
 

 Let us remark that under a stronger assumption, namely $V\in L^2_{\rm loc}(dw)$\textup{,} it was proved in Amri and Hammi \cite{Amri}  that $L$ is essentially self-adjoint non-negative operator\textup{,} that is, $L$ is the closure of the operator  $$\mathcal L_\infty f=-\Delta_k f+Vf,$$ 
initially defined on $C_c^\infty(\mathbb R^N)$. We 
 will not use this assumption in our forthcoming considerations and keep the weaker assumption $V\in L^1_{\rm loc}( dw)$.

\section{Schr\"odinger semigroups with bounded potentials } 

In this section we utilize the product formula~\eqref{eq:product-formula} to get existence and regularity of the kernel $k^{\{V\}}_t(\mathbf x,\mathbf y)$  from properties of approximation kernels. 
In this section, we assume that $V\geq 0$ is a bounded potential. 

\begin{theorem}\label{teo:continuous_k}
Assume that $V:\mathbb{R}^N \longmapsto [0,\infty)$ is a bounded measurable function. Then the semigroup $\{e^{t(\Delta_k-V)}\}_{t \geq 0}$ of linear operators generated by $\Delta_k-V$ has the form
\begin{align}\label{eq:semigroup_form}
    e^{t(\Delta_k-V)}f(\mathbf x)=\int_{\mathbb{R}^N} k^{\{V\}}_t(\mathbf x,\mathbf y)f(\mathbf y)\, dw(\mathbf y),\quad f\in L^2(dw),
\end{align}
where $\mathbb{R}^N \times \mathbb{R}^N \times (0,\infty) \ni (\mathbf{x},\mathbf{y},t) \longmapsto k^{\{V\}}_t(\mathbf x,\mathbf y)$ is a continuous function such that there are constants $C,c>0$ such that
\begin{align*}
    0\leq  k^{\{V\}}_t(\mathbf x,\mathbf y)\leq h_t(\mathbf x,\mathbf y),
\end{align*}
\begin{equation}\label{eq:lipschitz_k_t} |k^{\{V\}}_t(\mathbf x,\mathbf y)-k^{\{V\}}_t(\mathbf x',\mathbf y')|\leq C (1+\sqrt{t\|V\|_\infty})\frac{\| \mathbf x-\mathbf x'\|+\|\mathbf y-\mathbf y'\|}{\sqrt{t}}h(ct,\mathbf x,\mathbf x',\mathbf y,\mathbf y'),
\end{equation}
for all $\mathbf{x},\mathbf{x}',\mathbf{y},\mathbf{y}' \in \mathbb{R}^N$ and $t>0$, 
where
\begin{align*}
    h(ct,\mathbf x_1,\mathbf x_2,\mathbf y_1,\mathbf y_2):=\sum_{i=1}^2\sum_{j=1}^2h_{ct}(\mathbf x_i,\mathbf y_j).    
\end{align*}

Moreover, for all $\mathbf x,\mathbf y\in\mathbb R^N$, the function $(0,\infty)\ni t \to k^{\{V\}}_t(\mathbf x,\mathbf y)$ is differentiable and for any $m \in \mathbb{N}$ there is a constant $C_m>0$ such that for all $\mathbf x, \mathbf y \in \mathbb{R}^N$ and $t>0$ we have
\begin{equation}\label{eq:time_der} \Big|\frac{d^m}{dt^m} k^{\{V\}}_t(\mathbf x,\mathbf y)\Big|\leq C_mt^{-m} w(B(\mathbf x,\sqrt{t}))^{-1/2} w(B(\mathbf y,\sqrt{t}))^{-1/2}.
\end{equation}
The constants $C,c,C_m$ are independent of $V$.
\end{theorem}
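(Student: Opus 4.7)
Apply Theorem~\ref{teo:AB} with $A=\Delta_k$ (generator of the Dunkl heat semigroup $\{H_t\}_{t\geq 0}$ on $L^2(dw)$) and the bounded perturbation $B=-V$ (with $\|B\|_{L^2\to L^2}\leq \|V\|_\infty$) to obtain both the Trotter product formula $e^{t(\Delta_k-V)}=\lim_n (H_{t/n}M_{t/n})^n$ in the strong operator topology (where $M_sf(\mathbf{x}):=e^{-sV(\mathbf{x})}f(\mathbf{x})$) and the Duhamel identity $e^{t(\Delta_k-V)}=H_t-\int_0^tH_{t-s}Ve^{s(\Delta_k-V)}\,ds$. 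The $n$-th product iterate has the explicit integral kernel
\[
k_n(t,\mathbf{x},\mathbf{y})=e^{-(t/n)V(\mathbf{y})}\int_{(\mathbb{R}^N)^{n-1}}\prod_{j=0}^{n-1}h_{t/n}(\mathbf{z}_j,\mathbf{z}_{j+1})\prod_{j=1}^{n-1}e^{-(t/n)V(\mathbf{z}_j)}\,dw(\mathbf{z}_1)\cdots dw(\mathbf{z}_{n-1})
\]
with $\mathbf{z}_0=\mathbf{x}$ and $\mathbf{z}_n=\mathbf{y}$; since $e^{-sV}\in[0,1]$, iterated application of the semigroup identity $\int h_{t/n}(\mathbf{z},\mathbf{z}')h_{t/n}(\mathbf{z}',\mathbf{z}'')\,dw(\mathbf{z}')=h_{2t/n}(\mathbf{z},\mathbf{z}'')$ gives $0\leq k_n\leq h_t$ uniformly in $n$.

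\textbf{Continuity via the Dyson series.} Iterating the Duhamel identity produces the Dyson expansion $k^{\{V\}}_t=\sum_{m\geq 0}(-1)^m K_m$ with
\[
K_m(t,\mathbf{x},\mathbf{y})=\int_{\Delta_m(t)}\int_{(\mathbb{R}^N)^m}h_{t-s_m}(\mathbf{x},\mathbf{z}_m)V(\mathbf{z}_m)\cdots V(\mathbf{z}_1)h_{s_1}(\mathbf{z}_1,\mathbf{y})\,dw(\mathbf{z})\,ds,
\]
where $\Delta_m(t)=\{0\leq s_1\leq\cdots\leq s_m\leq t\}$. Each $K_m$ is jointly continuous with $0\leq K_m\leq \frac{(t\|V\|_\infty)^m}{m!}h_t$ by the semigroup property, so the series converges absolutely and uniformly on compact subsets of $(0,\infty)\times\mathbb{R}^N\times\mathbb{R}^N$, yielding joint continuity of $k^{\{V\}}_t$. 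Uniqueness of $L^2(dw)$-integral kernels identifies this sum with the product-formula limit, so $0\leq k^{\{V\}}_t\leq h_t$ persists to the limit.

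\textbf{Lipschitz estimate \eqref{eq:lipschitz_k_t}.} Differencing in $\mathbf{x}$ through the Duhamel representation yields
\[
|k^{\{V\}}_t(\mathbf{x},\mathbf{y})-k^{\{V\}}_t(\mathbf{x}',\mathbf{y})|\leq |h_t(\mathbf{x},\mathbf{y})-h_t(\mathbf{x}',\mathbf{y})|+\int_0^t\!\!\int |h_{t-s}(\mathbf{x},\mathbf{z})-h_{t-s}(\mathbf{x}',\mathbf{z})|V(\mathbf{z})k^{\{V\}}_s(\mathbf{z},\mathbf{y})\,dw(\mathbf{z})\,ds.
\]
Theorem~\ref{teo:holder_est} controls the first term. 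For the integral, bound $V\leq \|V\|_\infty$ and $k^{\{V\}}_s\leq h_s$, apply Lemma~\ref{lem:iter_lip} to the $\mathbf{z}$-integral (producing $C\|\mathbf{x}-\mathbf{x}'\|(t-s)^{-1/2}[h_{c(t-s)+s}(\mathbf{x},\mathbf{y})+h_{c(t-s)+s}(\mathbf{x}',\mathbf{y})]$), observe that the time $c(t-s)+s\in [t,(c+1)t/2]$ is comparable to $t$ so that the heat kernels are comparable to $h_{ct}$ by Theorem~\ref{teo:1}, and finally integrate $\int_0^t(t-s)^{-1/2}\,ds=2\sqrt{t}$. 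This crude argument gives coefficient $C(1+t\|V\|_\infty)\cdot\frac{\|\mathbf{x}-\mathbf{x}'\|}{\sqrt{t}}$. The $\mathbf{y}$-difference is identical via self-adjointness $k^{\{V\}}_t(\mathbf{x},\mathbf{y})=k^{\{V\}}_t(\mathbf{y},\mathbf{x})$, and the joint estimate follows by the triangle inequality. \emph{The main technical obstacle is the sharper factor $(1+\sqrt{t\|V\|_\infty})$ demanded by \eqref{eq:lipschitz_k_t}}; obtaining it requires exploiting cancellation preserved by the exponential factors $e^{-(t/n)V}$ in the product-formula approximants (and lost in the Dyson expansion), which I expect to succeed via a Gronwall/Abel-type bootstrap combined with the energy identity $\int V|e^{-sL}g|^2\,dw\leq \tfrac{1}{2se}\|g\|_{L^2}^2$ arising from the quadratic form $Q_\infty$ -- the natural source of a $\sqrt{\|V\|_\infty}$-dependence.

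\textbf{Time derivatives \eqref{eq:time_der}.} Since Theorem~\ref{teo:AB} gives holomorphy of $\{e^{-tL}\}$, the map $t\mapsto k^{\{V\}}_t(\mathbf{x},\mathbf{y})$ is $C^\infty$. By self-adjointness, $k^{\{V\}}_t(\mathbf{x},\mathbf{y})=\langle k^{\{V\}}_{t/2}(\mathbf{x},\cdot),k^{\{V\}}_{t/2}(\mathbf{y},\cdot)\rangle_{L^2(dw)}$, so the Leibniz rule expresses $\partial_t^m k^{\{V\}}_t$ as a finite sum of terms $2^{-m}\binom{m}{j}\langle L^j k^{\{V\}}_{t/2}(\mathbf{x},\cdot),L^{m-j}k^{\{V\}}_{t/2}(\mathbf{y},\cdot)\rangle$. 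The spectral bound $\|L^j e^{-\tau L}\|_{L^2\to L^2}\leq (j/\tau e)^j$ applied with $\tau=t/4$, together with $\|k^{\{V\}}_{t/4}(\mathbf{x},\cdot)\|_{L^2}\leq \|h_{t/4}(\mathbf{x},\cdot)\|_{L^2}=h_{t/2}(\mathbf{x},\mathbf{x})^{1/2}\lesssim w(B(\mathbf{x},\sqrt{t}))^{-1/2}$ (Theorem~\ref{teo:1} on the diagonal), yields $\|L^j k^{\{V\}}_{t/2}(\mathbf{x},\cdot)\|_{L^2}\leq C_j t^{-j}w(B(\mathbf{x},\sqrt{t}))^{-1/2}$; Cauchy--Schwarz on the inner product then gives \eqref{eq:time_der} with constants independent of $V$.
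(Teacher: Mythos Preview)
Your continuity argument via the Dyson series and your time-derivative argument are both fine and track the paper's reasoning (the paper instead proves uniform Lipschitz bounds for the product-formula approximants $q_{n,t}$ and extracts a convergent subsequence by Arzel\`a--Ascoli, but your route is just as good). The genuine gap is the one you flag yourself: your Duhamel argument yields only the coefficient $C(1+t\|V\|_\infty)$, and your proposed repair---a Gronwall/Abel bootstrap fed by the quadratic-form inequality $\int V|e^{-sL}g|^2\,dw\lesssim s^{-1}\|g\|_{L^2}^2$---is speculative and, as stated, does not produce the required $\sqrt{\,\cdot\,}$ improvement (that energy inequality controls $\sqrt{V}\,e^{-sL}g$ in $L^2$, not the pointwise kernel difference).

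The paper's actual fix is far simpler and purely structural: first prove the Lipschitz bound only for $0<t\le \|V\|_\infty^{-1}$, where your factor $(1+t\|V\|_\infty)\le 2$ already does the job, and then propagate to large $t$ by the semigroup property. Concretely, for $t>\|V\|_\infty^{-1}$ set $n=\lceil t\|V\|_\infty\rceil$ and $\tau=t/n\le \|V\|_\infty^{-1}$; writing
\[
k^{\{V\}}_t(\mathbf x,\mathbf y)-k^{\{V\}}_t(\mathbf x',\mathbf y)
=\int_{\mathbb R^N}\bigl[k^{\{V\}}_\tau(\mathbf x,\mathbf z)-k^{\{V\}}_\tau(\mathbf x',\mathbf z)\bigr]\,k^{\{V\}}_{(n-1)\tau}(\mathbf z,\mathbf y)\,dw(\mathbf z),
\]
one applies the small-time Lipschitz bound to the bracket, the domination $k^{\{V\}}_{(n-1)\tau}\le h_{(n-1)\tau}$, and the semigroup identity for $h$, obtaining a right-hand side of size
\[
\frac{C\,\|\mathbf x-\mathbf x'\|}{\sqrt{\tau}}\,h(c't,\mathbf x,\mathbf x',\mathbf y,\mathbf y)
=\frac{C\sqrt{n}\,\|\mathbf x-\mathbf x'\|}{\sqrt{t}}\,h(c't,\ldots)
\le C'\bigl(1+\sqrt{t\|V\|_\infty}\bigr)\frac{\|\mathbf x-\mathbf x'\|}{\sqrt{t}}\,h(c't,\ldots).
\]
So the sharper constant comes for free from the semigroup law, not from any refined cancellation in the exponential factors; replace your final paragraph on the Lipschitz estimate with this reduction and the proof is complete.
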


\begin{proof} We assume that $V\not\equiv 0$. 
It suffices to prove \eqref{eq:lipschitz_k_t} for $0<t\leq \| V\|_{\infty}^{-1} $ and then use the semigroup property. 

Let us consider the integral kernels $Q_{n,t}(\mathbf x,\mathbf y)$ of the operators $(H_{t/n}e^{-tV/n})^n$.  
We write 

\begin{align}\label{eq:q_n_t}
    Q_{n,t}(\mathbf x,\mathbf y):=q_{n,t}(\mathbf x,\mathbf y)e^{-tV(\mathbf y)/n},
\end{align}
where 
\begin{equation}\label{eq:q_n_t_1}
    \begin{split}
       & 0\leq  q_{n,t} (\mathbf x,\mathbf y)\\
       &:=\int_{\mathbb{R}^N}\ldots\int_{\mathbb{R}^N} h_{t/n}(\mathbf x,\mathbf z_1)e^{-tV(\mathbf z_1)/n}h_{t/n}(\mathbf z_1,\mathbf z_2)e^{-tV(\mathbf z_2)/n}\ldots 
       h_{t/n} (\mathbf z_{n-1}, \mathbf y)\, dw(\mathbf z_{n-1})\ldots dw(\mathbf z_1)\\
       &\leq h_t(\mathbf x,\mathbf y).
    \end{split} 
\end{equation}
We prove that the functions $\mathbb R^N \times \mathbb R^N\ni (\mathbf x,\mathbf y)\mapsto q_{n,t}(\mathbf x,\mathbf y)$, which are clearly continuous,  are Lipschitz functions of $(\mathbf x,\mathbf y)$. A uniform bound independent of $n \in \mathbb{N}$ will be given.

For $\mathbf{z}_1 \in \mathbb{R}^N$ we write  $\exp(-tV(\mathbf z_1)/n)=1-tW(\mathbf z_1)/n$ 
where $|W(\mathbf z_1)|\leq \| V\|_{\infty}$. 
Thus, thanks to the fact $\int_{\mathbb{R}^N}h_{t/n}(\mathbf{x},\mathbf{z}_1)h_{t/c}(\mathbf{z}_1,\mathbf{z}_2)\,dw(\mathbf{z}_1)=h_{2t/n}(\mathbf{x},\mathbf{z}_2)$, we get
\begin{equation*}
    \begin{split}
       &  q_{n,t} (\mathbf x,\mathbf y)\\
       &=\int_{\mathbb{R}^N}\ldots\int_{\mathbb{R}^N} h_{2t/n}(\mathbf x,\mathbf z_2)e^{-tV(\mathbf z_2)/n}h_{t/n}(\mathbf z_2,\mathbf z_3)e^{-tV(\mathbf z_3)/n}\ldots 
       h_{t/n} (\mathbf z_{n-1}, \mathbf y)\, dw(\mathbf z_{n-1})\ldots dw(\mathbf z_2)\\
       &- \frac{t}{n} \int_{\mathbb{R}^N}\ldots\int_{\mathbb{R}^N} h_{t/n}(\mathbf x,\mathbf z_1)W(\mathbf z_1)h_{t/n}(\mathbf z_1,\mathbf z_2) e^{-tV(\mathbf z_2)/n}\ldots 
       h_{t/n} (\mathbf z_{n-1}, \mathbf y)\, dw(\mathbf z_{n-1})\ldots dw(\mathbf z_1)\\
       & =:J_1^{[1]}(\mathbf x,\mathbf y) - J_2^{[1]}(\mathbf x,\mathbf y).
    \end{split} 
\end{equation*}
Observe that by Lemma \ref{lem:iter_lip}, we have 
\begin{equation*}
    \begin{split}
        |J_2^{[1]} (\mathbf x,\mathbf y)-J_2^{[1]}(\mathbf x',\mathbf y)|
   & \leq \frac{t\|V\|_\infty}{n}\int_{\mathbb{R}^N} | h_{t/n}(\mathbf x,\mathbf z_1)-h_{t/n}(\mathbf x',\mathbf z_1)| h_{(n-1)t/n}(\mathbf z_1,\mathbf y)\, \, dw(\mathbf z_1)\\
   &\leq C \frac{t\| V\|_\infty }{n} \frac{\| \mathbf x-\mathbf x'\|}{\sqrt{t/n}}\Big( h_{ct/n+(n-1)t/n}(\mathbf{x},\mathbf{y})+  h_{ct/n+(n-1)t/n}(\mathbf{x}',\mathbf{y})\Big)\\
   &\leq   C \frac{t\| V\|_\infty }{\sqrt{n}} \frac{\| \mathbf x-\mathbf x'\|}{\sqrt{t}}\Big( h_{ct/n+(n-1)t/n}(\mathbf{x},\mathbf{y})+  h_{ct/n+(n-1)t/n}(\mathbf{x}',\mathbf{y})\Big).
    \end{split}
\end{equation*}
It follows from~\eqref{heat:Rosler} that $h_{ct/n+(n-1)t}(\mathbf{x},\mathbf{y}) \leq C'h_{ct}(\mathbf{x},\mathbf{y})$. Hence, in the first step we have got 
\begin{equation}
    \begin{split}
        |q_{n,t}(\mathbf x,\mathbf y)- q_{n,t}(\mathbf x',\mathbf y)|&\leq |J_1^{[1]}(\mathbf x,\mathbf y)-J_1^{[1]}(\mathbf x',\mathbf y)|\\
        &+ C \frac{t\| V\|_\infty }{\sqrt{n}} \frac{\| \mathbf x-\mathbf x'\|}{\sqrt{t}}\Big( h_{ct}(\mathbf{x},\mathbf{y}) + h_{ct}(\mathbf{x}',\mathbf{y})\Big).
    \end{split}
\end{equation}

In the second step, we deal with $J_1^{[1]}(\cdot,\cdot)$. 
For $\mathbf{z}_2 \in \mathbb{R}^N$ we write  $\exp(-tV(\mathbf z_2)/n)=1-tW(\mathbf z_2)/n$,  
where $|W(\mathbf z_2)|\leq \| V\|_{\infty}$ and plug to the formula for $J_1^{[1]}(\cdot,\cdot)$. 
Thus 
\begin{equation*}
    \begin{split}
       & J_1^{[1]}(\mathbf{x},\mathbf{y})\\
       &=\int_{\mathbb{R}^N}\ldots\int_{\mathbb{R}^N} h_{2t/n}(\mathbf x,\mathbf z_2) (1-\frac{t}{n} W(\mathbf z_2))h_{t/n}(\mathbf z_2,\mathbf z_3)e^{-tV(\mathbf z_3)/n}\ldots 
       h_{t/n} (\mathbf z_{n-1}, \mathbf y)\, dw(\mathbf z_{n-1})\ldots\;dw(\mathbf z_2)\\
       &=\int_{\mathbb{R}^N}\ldots\int_{\mathbb{R}^N} h_{3t/n}(\mathbf x,\mathbf z_3) e^{-tV(\mathbf z_3)/n}h_{t/n}(\mathbf z_3,\mathbf z_4)e^{-tV(\mathbf z_4)/n}\ldots 
       h_{t/n} (\mathbf z_{n-1}, \mathbf y)\, dw(\mathbf z_{n-1})\ldots\;dw(\mathbf z_3)\\
       &-\frac{t}{n} \int_{\mathbb{R}^N}\ldots\int_{\mathbb{R}^N} h_{2t/n}(\mathbf x,\mathbf z_2) W(\mathbf z_2)h_{t/n}(\mathbf z_2,\mathbf z_3)e^{-tV(\mathbf z_3)/n}\ldots 
       h_{t/n} (\mathbf z_{n-1}, \mathbf y)\, dw(\mathbf z_{n-1})\ldots\;dw(\mathbf z_2)\\
       &=:J^{[2]}_1(\mathbf x,\mathbf y)-J^{[2]}_2(\mathbf x,\mathbf y). 
    \end{split}
\end{equation*}
Further, by Lemma \ref{lem:iter_lip},  
\begin{equation*}
    \begin{split}
        |J_2^{[2]} (\mathbf x,\mathbf y)-J_2^{[2]}(\mathbf x',\mathbf y)|
   & \leq \frac{Ct\|V\|_\infty}{n}\int_{\mathbb{R}^N} | h_{2t/n}(\mathbf x,\mathbf z_2)-h_{2t/n}(\mathbf x',\mathbf z_2)| h_{(n-2)t/n}(\mathbf z_2,\mathbf y)\, \, dw(\mathbf z_2)\\
   &\leq \frac{Ct\| V\|_\infty }{n} \frac{\| \mathbf x-\mathbf x'\|}{\sqrt{2t/n}}  C \Big(h_{2ct/n +(n-2)t/n}(\mathbf x,\mathbf y)
+   h_{2ct/n +(n-2)t/n}(\mathbf x,\mathbf y)\Big)\\
&\leq   \frac{Ct\| V\|_\infty }{\sqrt{2n}} \frac{\| \mathbf x-\mathbf x'\|}{\sqrt{t}}   \Big(h_{ct}(\mathbf x,\mathbf y)
+   h_{ct}(\mathbf x,\mathbf y)\Big).
    \end{split}
\end{equation*}
Thus, at the end of the second step, we have 
\begin{equation}
    \begin{split}
        |q_{n,t}(\mathbf x,\mathbf y)- q_{n,t}(\mathbf x',\mathbf y)|&\leq |J_1^{[2]}(\mathbf x,\mathbf y)-J_1^{[2]}(\mathbf x,\mathbf y)|\\
        &+ C  t\| V\|_\infty \frac{\|\mathbf x-\mathbf x'\|}{\sqrt{t}}\Big(\frac{1}{\sqrt{n}} + \frac{1}{\sqrt{2n}}\Big)\Big(h_{ct}(\mathbf x,\mathbf y)
+   h_{ct}(\mathbf x,\mathbf y)\Big). 
    \end{split}
\end{equation}

We continue this procedure, obtaining at the of the $m$-th step, 
$1\leq m\leq n-1$, the bound 

\begin{equation}
    \begin{split}
        |q_{n,t}(\mathbf x,\mathbf y)- q_{n,t}(\mathbf x',\mathbf y)|&\leq |J_1^{[m]}(\mathbf x,\mathbf y)-J_1^{[m]}(\mathbf x',\mathbf y)|\\
        &+  C  t\| V\|_\infty \frac{\|\mathbf x-\mathbf x'\|}{\sqrt{t}}\Big(\sum_{\ell=1}^m\frac{1}{\sqrt{\ell n}}\Big)\Big(h_{ct}(\mathbf x,\mathbf y)
+   h_{ct}(\mathbf x,\mathbf y)\Big) , 
    \end{split}
\end{equation}
where 
\begin{align*}
    J_1^{[m]}(\mathbf x,\mathbf y):= & \int_{\mathbb{R}^N} \ldots\int_{\mathbb{R}^N} h_{(m+1)t/n}(\mathbf{x},\mathbf{z}_{m+1})e^{-tV(\mathbf z_{m+1})}h_{t/n}(\mathbf z_{m+1},\mathbf z_{m+2})e^{-tV(\mathbf z_{m+2})/n}\times\dots\\   &\times h_{t/n}(\mathbf z_{n-1},\mathbf y)\, dw(\mathbf z_{n-1})\dots \;dw(\mathbf z_{m+1}).
\end{align*} 
Finally, we end up with the bound 

\begin{equation*}
    \begin{split}
        |q_{n,t}(\mathbf x,\mathbf y)- q_{n,t}(\mathbf x',\mathbf y)|&\leq |h_t(\mathbf x,\mathbf y)-h_t(\mathbf x',\mathbf y)|\\
        &+  C  t\| V\|_\infty \frac{\|\mathbf x-\mathbf x'\|}{\sqrt{t}}\Big(\sum_{\ell=1}^{n-1}\frac{1}{\sqrt{\ell n}}\Big)\Big(h_{ct}(\mathbf x,\mathbf y)
+   h_{ct}(\mathbf x',\mathbf y)\Big)\\
&\leq |h_t(\mathbf x,\mathbf y)-h_t(\mathbf x',\mathbf y)|
         +  C  t\| V\|_\infty \frac{\|\mathbf x-\mathbf x'\|}{\sqrt{t}}\Big(h_{ct}(\mathbf x,\mathbf y)
+   h_{ct}(\mathbf x',\mathbf y)\Big) \\
&\leq C(1+t\|V\|_\infty) \frac{\|\mathbf x-\mathbf x'\|}{\sqrt{t}}\Big(h_{ct}(\mathbf x,\mathbf y)
+   h_{ct}(\mathbf x',\mathbf y)\Big).\\
    \end{split}
\end{equation*}
By the same argument, 
\begin{equation}
    \begin{split}
        |q_{n,t}(\mathbf x,\mathbf y)- q_{n,t}(\mathbf x,\mathbf y')|
        &\leq C(1+t\|V\|_\infty) \frac{\|\mathbf y-\mathbf y'\|}{\sqrt{t}}\Big(h_{ct}(\mathbf x,\mathbf y)
+   h_{ct}(\mathbf x,\mathbf y')\Big).\\
    \end{split}
\end{equation}
Recall that $0<t\leq \| V\|_{\infty}^{-1}$. By the Arzel\'a-Ascoli theorem, there is a subsequence $\{n_j\}_{j \in \mathbb{N}}$ such that 
$\{q_{n_j,t}(\mathbf x,\mathbf y)\}_{j \in \mathbb{N}}$ converges uniformly on all compact sets of $\mathbb R^N\times \mathbb R^N$ to a continuous function $(\mathbf{x},\mathbf{y}) \longmapsto k^{\{V\}}_t(\mathbf x,\mathbf y)$,  which satisfies: 
\begin{equation}
    \begin{split}
      & 0\leq  k^{\{V\}}_t(\mathbf x,\mathbf y)\leq h_t(\mathbf x,\mathbf y),\\
       & |k^{\{V\}}_t(\mathbf x,\mathbf y)-k^{\{V\}}_t(\mathbf x',\mathbf y')|\leq C\frac{\|\mathbf x-\mathbf x'\|+\|\mathbf y-\mathbf y'\|}{\sqrt{t}}h(ct,\mathbf x,\mathbf y,\mathbf x',\mathbf y') .
    \end{split}
\end{equation}
Observe that the sequence $\{Q_{n_j,t}(\mathbf x,\mathbf y)\}_{j \in \mathbb{N}}$ converges uniformly on compact subsets of $\mathbb R^N\times\mathbb R^N$ to $k^{\{V\}}_t(\mathbf x,\mathbf y)$ as well. 
By the product formula~\eqref{eq:product-formula}, for all  $f\in L^2(dw)$,  we have 
\begin{equation}\label{eq:product_formula2} e^{t(\Delta_k-V)}f(\mathbf x)=\lim_{n\to\infty} \int_{\mathbb{R}^N} Q_{t,n}(\mathbf x,\mathbf y)f(\mathbf y)\, dw(\mathbf y)
\end{equation}
with the convergence in the $L^2(dw(\mathbf x))$-norm. Recall that $Q_{t,n}(\mathbf x,\mathbf y)\leq h_t(\mathbf x,\mathbf y)$ (see \eqref{eq:q_n_t} and \eqref{eq:q_n_t_1}). Thus, by the Lebesgue dominated convergence theorem, for all $\mathbf x\in\mathbb R^N$, one has 
$$ \lim_{j\to\infty} \int_{\mathbb{R}^N} Q_{t,n_j}(\mathbf x,\mathbf y)f(\mathbf y)\, dw(\mathbf y)=\int_{\mathbb{R}^N} k^{\{V\}}_t(\mathbf x,\mathbf y)f(\mathbf y)\, dw(\mathbf y).$$ 
Thus \eqref{eq:semigroup_form} is established. 
 
We now turn to prove \eqref{eq:time_der}. The operator  $-\Delta_k+V$ is non-negative and self-adjoint on $L^2(dw)$. Thus, by the spectral theorem, the mapping $(0,\infty)\ni t\mapsto e^{t(\Delta_k-V)}\in \mathcal L(L^2(dw))$ is a smooth function, and for  any $m \in \mathbb{N}$ there is $C_m>0$ such that for all measurable and bounded $V\geq 0$ and $t>0$ we have
\begin{align*}
    \left\| \frac{d^m}{dt^m} e^{t(\Delta_k-V)}\right\|_{\mathcal L(L^2(dw))}\leq C_mt^{-m}.    
\end{align*}
Here $\mathcal L(L^2(dw))$ denotes the Banach space of bounded linear operators on $L^2(dw)$. Thus, 
$$ \Big|\frac{d^m}{dt^m}\langle e^{t(\Delta_k-V)} f,g\rangle_{L^2(dw)}\Big|\leq C_m t^{-m}\| f\|_{L^2(dw)}\|g\|_{L^2(dw)}. $$ For $t>0$, set $t_0=t/4$. Then for fixed $\mathbf x,\mathbf y\in\mathbb R^N$, we have 
$$k^{\{V\}}_t(\mathbf x,\mathbf y)=\Big\langle e^{(t-2t_0)(\Delta_k -V)}k_{t_0}(\cdot,\mathbf y), k_{t_0}(\mathbf x,\cdot)\Big\rangle_{L^2(dw)}. $$
Hence \eqref{eq:time_der} follows, since $\| k_{t_0}(\cdot, \mathbf y)\|_{L^2(dw)}\leq C w(B(\mathbf y,\sqrt{t}))^{-1/2}$, by Theorem~\ref{teo:1}. 
\end{proof}

\begin{corollary}\label{coro:mono}
Assume that $V_1,V_2 \colon \mathbb{R}^N \longmapsto [0,\infty)$, $V_1,V_2$ are bounded, and $V_1(\mathbf{y}) \leq V_2(\mathbf{y})$ for all $\mathbf{y} \in \mathbb{R}^N$. Then for all $\mathbf{x},\mathbf{y} \in \mathbb{R}^N$ and $t>0$ we have 
\begin{align*}
    k_{t}^{\{V_2\}}(\mathbf{x},\mathbf{y}) \leq k_{t}^{\{V_1\}}(\mathbf{x},\mathbf{y}).
\end{align*}
\end{corollary}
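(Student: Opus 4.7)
The plan is to use the Trotter product formula from Theorem~\ref{teo:AB} to compare the two semigroups at the level of approximations, where the monotonicity is evident, and then pass to the limit. Following the construction in the proof of Theorem~\ref{teo:continuous_k}, for each bounded non-negative potential $V$ the semigroup $e^{t(\Delta_k - V)}$ is obtained as a limit of the operators $(H_{t/n}\,M_{e^{-tV/n}})^n$, whose integral kernel is the function $Q_{n,t}^{\{V\}}(\mathbf{x},\mathbf{y}) = q_{n,t}^{\{V\}}(\mathbf{x},\mathbf{y}) e^{-tV(\mathbf{y})/n}$ from~\eqref{eq:q_n_t}--\eqref{eq:q_n_t_1} (with $V$ in place of the generic potential).

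The first step is a pointwise monotonicity at the approximate level: since $h_{t/n} \geq 0$ and $0 \leq V_1 \leq V_2$ gives $e^{-tV_2(\mathbf{z})/n} \leq e^{-tV_1(\mathbf{z})/n}$ for every $\mathbf{z} \in \mathbb{R}^N$, a direct inspection of the iterated-integral formula defining $Q_{n,t}^{\{V\}}$ yields
\begin{equation*}
0 \leq Q_{n,t}^{\{V_2\}}(\mathbf{x},\mathbf{y}) \leq Q_{n,t}^{\{V_1\}}(\mathbf{x},\mathbf{y})
\quad\text{for all } \mathbf{x},\mathbf{y} \in \mathbb{R}^N, \ n \in \mathbb{N}, \ t>0.
\end{equation*}

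The second step is to pass to the limit while preserving this inequality. Fix any non-negative $f \in C_c(\mathbb{R}^N) \subset L^2(dw)$. By the product formula~\eqref{eq:product-formula}, the sequences $(H_{t/n}\,M_{e^{-tV_i/n}})^n f$ converge in $L^2(dw)$ to $e^{t(\Delta_k - V_i)}f$ for $i=1,2$, so a further subsequence converges $dw$-almost everywhere for \emph{both} $i=1$ and $i=2$ simultaneously. The pointwise inequality above is preserved, giving
\begin{equation*}
\int_{\mathbb{R}^N} k_t^{\{V_2\}}(\mathbf{x},\mathbf{y}) f(\mathbf{y})\, dw(\mathbf{y}) \leq \int_{\mathbb{R}^N} k_t^{\{V_1\}}(\mathbf{x},\mathbf{y}) f(\mathbf{y})\, dw(\mathbf{y})
\end{equation*}
for almost every $\mathbf{x} \in \mathbb{R}^N$. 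Since both sides are continuous in $\mathbf{x}$ by Theorem~\ref{teo:continuous_k}, the inequality holds for every $\mathbf{x}$. Letting $f$ range over an approximate identity at $\mathbf{y}$ and using continuity in $\mathbf{y}$ converts the integrated comparison into the pointwise bound $k_t^{\{V_2\}}(\mathbf{x},\mathbf{y}) \leq k_t^{\{V_1\}}(\mathbf{x},\mathbf{y})$.

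The only real obstacle is reconciling the two modes of convergence: the product formula only gives $L^2$ convergence of the semigroup action, while the monotonicity of the approximations is pointwise. This is handled by the standard subsequential extraction described above; alternatively, the Lipschitz estimates established for $q_{n,t}^{\{V\}}$ in the proof of Theorem~\ref{teo:continuous_k} are uniform in $n$ and in $V$ (for $V$ with fixed sup-norm bound), so Arzel\`a--Ascoli lets us extract a \emph{common} subsequence $\{n_j\}$ along which both $Q_{n_j,t}^{\{V_1\}}$ and $Q_{n_j,t}^{\{V_2\}}$ converge uniformly on compact subsets of $\mathbb{R}^N \times \mathbb{R}^N$ to $k_t^{\{V_1\}}$ and $k_t^{\{V_2\}}$ respectively, and the pointwise inequality passes to the limit directly.
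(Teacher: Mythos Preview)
Your proof is correct and takes essentially the same approach as the paper: establish the pointwise monotonicity $Q_{n,t}^{\{V_2\}}\leq Q_{n,t}^{\{V_1\}}$ at the Trotter-product level and pass to the limit. The paper simply writes $k_t^{\{V_i\}}(\mathbf{x},\mathbf{y})=\lim_{n\to\infty} q_{n,t}^{\{V_i\}}(\mathbf{x},\mathbf{y})$ and compares directly, implicitly relying on the subsequence convergence from the proof of Theorem~\ref{teo:continuous_k}; your version makes the passage to the limit explicit (via either an a.e.\ subsequence of the $L^2$-convergent sequence or a common Arzel\`a--Ascoli subsequence), which is the only point that actually needed care.
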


\begin{proof}
It is enough to note that the assumption $V_1(\mathbf{y}) \leq V_2(\mathbf{y})$ implies
\begin{align*}
    &k_{t}^{\{V_2\}}(\mathbf{x},\mathbf{y})\\&=\lim_{n \to \infty} \int_{\mathbb{R}^N}\ldots\int_{\mathbb{R}^N} h_{t/n}(\mathbf x,\mathbf z_1)e^{-tV_2(\mathbf z_1)/n}h_{t/n}(\mathbf z_1,\mathbf z_2)e^{-tV_2(\mathbf z_2)/n}\ldots 
       h_{t/n} (\mathbf z_{n-1}, \mathbf y)\, dw(\mathbf z_{n-1})\ldots dw(\mathbf z_1)\\&\leq \lim_{n \to \infty} \int_{\mathbb{R}^N}\ldots\,\int_{\mathbb{R}^N} h_{t/n}(\mathbf x,\mathbf z_1)e^{-tV_1(\mathbf z_1)/n}h_{t/n}(\mathbf z_1,\mathbf z_2)e^{-tV_1(\mathbf z_2)/n}\ldots \,
       h_{t/n} (\mathbf z_{n-1}, \mathbf y)\, dw(\mathbf z_{n-1})\ldots dw(\mathbf z_1)\\&=k_{t}^{\{V_1\}}(\mathbf{x},\mathbf{y}).
\end{align*}
\end{proof}

\section{Upper bounds for Schr\"odinger semigroups with non-negative potentials} 
\begin{theorem}\label{teo:kernel_etL}
Assume that $V\colon \mathbb{R}^N \longmapsto [0,\infty)$, $V\in L^1_{\rm loc}(dw)$. Let $V_n=\min(V,n)$ and $L_n=-\Delta_k+V_n$, $n \in \mathbb{N}$. Then, for all $\mathbf{x},\mathbf{y} \in \mathbb{R}^N$ and $t>0$ the sequence $\{k_t^{\{V_n\}}(\mathbf x,\mathbf y)\}_{n \in \mathbb{N}}$ converges monotonically to the kernel of the semigroup $\{e^{-tL}\}_{t \geq 0}$, that is, for all $\mathbf{x},\mathbf{y} \in \mathbb{R}^N$ and $t>0$ we have 
$$ \lim_{n\to\infty} k_t^{\{V_n\}}(\mathbf x,\mathbf y)=k^{\{V\}}_t(\mathbf x,\mathbf y)$$ 
and 
\begin{equation}\label{eq:form_eL} e^{-tL}f(\mathbf x)=\int_{\mathbb{R}^N} k^{\{V\}}_t(\mathbf x,\mathbf y) f(\mathbf y)\, dw(\mathbf y). 
\end{equation}
Moreover, for any $m \in \mathbb{N}$ there is a constant $C_m>0$ such that for all  $(\mathbf x,\mathbf y)\in\mathbb R^N\times\mathbb R^N$ the function $(0,\infty)\ni t\mapsto k^{\{V\}}_t(\mathbf x,\mathbf y)$ is smooth and 
\begin{equation}\label{eq:analitic_der} \Big|\frac{d^m}{dt^m}k^{\{V\}}_t(\mathbf x,\mathbf y)\Big|\leq C_mt^{-m} w(B(\mathbf x,\sqrt{t}))^{-1/2} w(B(\mathbf y,\sqrt{t}))^{-1/2}.
\end{equation}
\end{theorem}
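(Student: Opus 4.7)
The plan is to define $k_t^{\{V\}}(\mathbf{x},\mathbf{y})$ as the pointwise monotone limit of the approximating kernels $k_t^{\{V_n\}}(\mathbf{x},\mathbf{y})$ furnished by Theorem~\ref{teo:continuous_k}, to identify this limit with the integral kernel of $\{e^{-tL}\}_{t\geq 0}$ using the $L^2$-convergence~\eqref{eq:basic_convergence}, and then to upgrade the $V$-independent derivative bound~\eqref{eq:time_der} into the smoothness and the bound~\eqref{eq:analitic_der} for the limiting kernel.

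First, since $V_n \leq V_{n+1}$ pointwise and both are bounded, Corollary~\ref{coro:mono} yields
\begin{equation*}
0 \leq k_t^{\{V_{n+1}\}}(\mathbf{x},\mathbf{y}) \leq k_t^{\{V_n\}}(\mathbf{x},\mathbf{y}) \leq h_t(\mathbf{x},\mathbf{y}),
\end{equation*}
so the pointwise limit $k_t^{\{V\}}(\mathbf{x},\mathbf{y}) := \lim_{n \to \infty} k_t^{\{V_n\}}(\mathbf{x},\mathbf{y})$ exists and belongs to $[0,h_t(\mathbf{x},\mathbf{y})]$. For $f \in L^1(dw) \cap L^2(dw)$, the integrand $k_t^{\{V_n\}}(\mathbf{x},\mathbf{y}) f(\mathbf{y})$ is dominated by $h_t(\mathbf{x},\mathbf{y}) |f(\mathbf{y})| \in L^1(dw(\mathbf{y}))$, so the dominated convergence theorem gives, for every $\mathbf{x}$,
\begin{equation*}
\lim_{n\to\infty} e^{-tL_n}f(\mathbf{x}) = \lim_{n \to \infty}\int_{\mathbb{R}^N} k_t^{\{V_n\}}(\mathbf{x},\mathbf{y}) f(\mathbf{y})\, dw(\mathbf{y}) = \int_{\mathbb{R}^N} k_t^{\{V\}}(\mathbf{x},\mathbf{y}) f(\mathbf{y})\, dw(\mathbf{y}).
\end{equation*}
Simultaneously, \eqref{eq:basic_convergence} gives $e^{-tL_n}f \to e^{-tL}f$ in $L^2(dw)$, hence $dw$-a.e.\ along a subsequence, so~\eqref{eq:form_eL} holds $dw$-a.e.\ whenever $f \in L^1 \cap L^2$. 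Density of $L^1 \cap L^2$ in $L^2(dw)$ together with the $L^2$-boundedness of integration against $k_t^{\{V\}} \leq h_t$ (which is dominated by the Dunkl heat semigroup) extends~\eqref{eq:form_eL} to all of $L^2(dw)$.

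For the smoothness and the bound~\eqref{eq:analitic_der}, the crucial point is that the constants $C_m$ in~\eqref{eq:time_der} do not depend on the potential, so
\begin{equation*}
\Big|\tfrac{d^m}{dt^m} k_t^{\{V_n\}}(\mathbf{x},\mathbf{y})\Big| \leq C_m\, t^{-m}\, w(B(\mathbf{x},\sqrt{t}))^{-1/2}\, w(B(\mathbf{y},\sqrt{t}))^{-1/2}
\end{equation*}
uniformly in $n$. Fix $\mathbf{x},\mathbf{y}$ and a compact interval $I \subset (0,\infty)$. The uniform bound on the first derivative makes $\{t \mapsto k_t^{\{V_n\}}(\mathbf{x},\mathbf{y})\}_{n}$ equicontinuous on $I$, so the monotone pointwise convergence is actually uniform on $I$ and the limit is continuous. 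A bootstrap in $m$ based on Arzel\`a--Ascoli (equicontinuity supplied by the uniform bound on the $(m+1)$-st derivative) combined with the fundamental theorem of calculus inductively shows that $t \mapsto k_t^{\{V\}}(\mathbf{x},\mathbf{y})$ is smooth on $(0,\infty)$ and $\tfrac{d^m}{dt^m} k_t^{\{V\}}(\mathbf{x},\mathbf{y}) = \lim_n \tfrac{d^m}{dt^m} k_t^{\{V_n\}}(\mathbf{x},\mathbf{y})$. Passing to the limit in the uniform bound then yields~\eqref{eq:analitic_der}.

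The main obstacle is the identification step: one must convert the $L^2$-convergence~\eqref{eq:basic_convergence}, which only supplies $dw$-a.e.\ pointwise behaviour along a subsequence, into a clean pointwise equality for an everywhere-defined integral kernel, and one must then extend the representation from the dense class $L^1 \cap L^2$ to all of $L^2(dw)$. Both difficulties are absorbed by the Gaussian-type domination $k_t^{\{V_n\}} \leq h_t$, which simultaneously furnishes the integrable dominant required for DCT and guarantees that the limiting integration operator is bounded on $L^2(dw)$. Once \eqref{eq:form_eL} is in hand, the $t$-smoothness is a routine Arzel\`a--Ascoli upgrade granted by the potential-independence of the derivative estimates in Theorem~\ref{teo:continuous_k}.
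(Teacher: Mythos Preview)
Your proposal is correct and follows essentially the same route as the paper's proof: monotone pointwise limit via Corollary~\ref{coro:mono}, identification with $e^{-tL}$ through the dominated convergence theorem combined with~\eqref{eq:basic_convergence}, and Arzel\`a--Ascoli applied to the potential-independent derivative bounds~\eqref{eq:time_der} to obtain smoothness and~\eqref{eq:analitic_der}. The only cosmetic difference is that the paper runs the DCT step directly for all $f\in L^2(dw)$ (using $h_t(\mathbf{x},\cdot)\in L^2(dw)$ and Cauchy--Schwarz as the integrable dominant), which spares your detour through $L^1\cap L^2$ and the subsequent density extension.
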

\begin{proof}
By the results of the previous section (see Theorem~\ref{teo:continuous_k} and Corollary~\ref{coro:mono}) the kernels $\{k_t^{\{V_n\}}(\mathbf x,\mathbf y)\}_{n \in \mathbb{N}}$ of the semigroups $\{e^{-tL_n}\}_{t \geq 0}$, form a monotonic family of continuous functions of $(t,\mathbf x,\mathbf y)$, that is, 
\begin{align*}
     0\leq k_t^{\{V_{n+1}\}}(\mathbf x,\mathbf y)\leq  k_t^{\{V_n\}}(\mathbf x,\mathbf y)\leq h_t(\mathbf x,\mathbf y).
\end{align*}
Hence, for all $(\mathbf{x},\mathbf{y}) \in \mathbb{R}^N \times \mathbb{R}^N$ and $t>0$ the limit $\lim_{n\to\infty}k_t^{\{V_n\}}(\mathbf x,\mathbf y)$ exists and defines a kernel $k^{\{V\}}_t(\mathbf x,\mathbf y)\leq h_t(\mathbf x,\mathbf y)$. Moreover, applying the Arzelà–Ascoli theorem, we deduce     \eqref{eq:analitic_der} from the inequalities
\begin{align*}
    \Big|\frac{d^m}{dt^m}k_t^{\{V_n\}}(\mathbf x,\mathbf y)\Big|\leq C_mt^{-m} w(B(\mathbf x,\sqrt{t}))^{-1/2} w(B(\mathbf y,\sqrt{t}))^{-1/2},
\end{align*}
which hold for $k_t^{\{V_n\}}(\mathbf x,\mathbf y)$ thanks to Theorem \ref{teo:continuous_k} (see~\eqref{eq:time_der}). Further, by the Lebesgue dominated convergence theorem,  for each  $\mathbf x\in\mathbb R^N$ and all  $f\in L^2(dw)$, the limit  
$$ \lim_{n\to\infty} e^{-tL_n}f(\mathbf x)=\lim_{n\to\infty} \int_{\mathbb{R}^N} k_t^{\{V_n\}}(\mathbf x,\mathbf y)f(\mathbf y)\, dw(\mathbf y)$$ 
exists and defines a bounded functional such that 
$$ \lim_{n\to\infty} e^{-tL_n}f(\mathbf x)=\int_{\mathbb{R}^N} k^{\{V\}}_t(\mathbf x,\mathbf y)f(\mathbf y)\, dw(\mathbf y).$$ 
On the other hand, by \eqref{eq:basic_convergence} for each $f\in L^2(dw)$, the sequence $\{e^{-tL_n}f\}_{n \in \mathbb{N}}$ converges in the $L^2(dw)$-norm to $e^{-tL}f$, hence \eqref{eq:form_eL} follows.
\end{proof}

\begin{corollary}\label{coro:mono_1}
Assume that $V_1,V_2 \colon \mathbb{R}^N \longmapsto [0,\infty)$, $V_1,V_2 \in L^1_{{\rm loc}}(dw)$ and $V_1(\mathbf{y}) \leq V_2(\mathbf{y})$ for all $\mathbf{y} \in \mathbb{R}^N$. Then for all $\mathbf{x},\mathbf{y} \in \mathbb{R}^N$ and $t>0$ we have 
\begin{align*}
    k_{t}^{\{V_2\}}(\mathbf{x},\mathbf{y}) \leq k_{t}^{\{V_1\}}(\mathbf{x},\mathbf{y}).
\end{align*}
\end{corollary}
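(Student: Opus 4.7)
The plan is to reduce to the already-established bounded-potential case (Corollary \ref{coro:mono}) by a straightforward monotone truncation argument. Set $V_{1,n}(\mathbf{y}) := \min(V_1(\mathbf{y}),n)$ and $V_{2,n}(\mathbf{y}) := \min(V_2(\mathbf{y}),n)$ for $n \in \mathbb{N}$. These are bounded, measurable, nonnegative functions, and the pointwise inequality $V_1 \leq V_2$ passes to the truncations: $V_{1,n}(\mathbf{y}) \leq V_{2,n}(\mathbf{y})$ for every $\mathbf{y} \in \mathbb{R}^N$ and every $n$.

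Next I would apply Corollary \ref{coro:mono} to the pair of bounded potentials $(V_{1,n}, V_{2,n})$, obtaining
\begin{equation*}
    k_t^{\{V_{2,n}\}}(\mathbf{x},\mathbf{y}) \leq k_t^{\{V_{1,n}\}}(\mathbf{x},\mathbf{y})
\end{equation*}
for all $\mathbf{x},\mathbf{y} \in \mathbb{R}^N$ and $t>0$. Since $V_1, V_2 \in L^1_{\rm loc}(dw)$, Theorem \ref{teo:kernel_etL} applies to each of them and asserts that the truncated kernels converge monotonically pointwise to the kernels of the full Schrödinger semigroups: $\lim_{n \to \infty} k_t^{\{V_{i,n}\}}(\mathbf{x},\mathbf{y}) = k_t^{\{V_i\}}(\mathbf{x},\mathbf{y})$ for $i=1,2$.

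Passing to the limit $n \to \infty$ on both sides of the inequality yields
\begin{equation*}
    k_t^{\{V_2\}}(\mathbf{x},\mathbf{y}) \leq k_t^{\{V_1\}}(\mathbf{x},\mathbf{y}),
\end{equation*}
which is the claim. There is essentially no obstacle here; the only delicate point is that the convergence $k_t^{\{V_{i,n}\}} \to k_t^{\{V_i\}}$ must hold pointwise, but this is exactly what Theorem \ref{teo:kernel_etL} provides, since the sequence is monotone decreasing in $n$ (as $V_{i,n}$ increases in $n$, one applies Corollary \ref{coro:mono} again to conclude the monotonicity of the kernels used in the definition of the limit). Thus the proof reduces to a one-line passage to the limit once the bounded case and the approximation theorem are in hand.
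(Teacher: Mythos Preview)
Your proof is correct and follows exactly the same approach as the paper: the paper's own proof is the single sentence ``It is a consequence of Corollary~\ref{coro:mono} and Theorem~\ref{teo:kernel_etL},'' and you have simply spelled out that deduction in detail.
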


\begin{proof}
It is a consequence of Corollary~\ref{coro:mono} and Theorem~\ref{teo:kernel_etL}.
\end{proof}

\section{The Feynman-Kac formula}\label{sec:F-K}  In this section we  elaborate  the Feynman-Kac formula for Dunkl Schr\"odinger operators with continuous bounded potentials. Our approach is standard and uses the product formula \eqref{eq:product-formula} (see also \eqref{eq:product_formula2}). For the reader convenience, we provide some details.  Then the Feynman--Kac formula will be used in proving the implication \eqref{item:potential_Green_bounded} $\implies$ \eqref{item:h_ct_leq_k_t} of Theorem \ref{teo:main}.  

Let  $I\subset \mathbb R$ be an interval. Recall that a function $I\ni t\longmapsto X_t\in \mathbb R^N$  is said to be {\it c\`adl\`ag} if it is right continuous, and it has  left  limits.

\begin{proposition}\label{prop:integral_cadlag}
Assume that $f \colon [a,b]\to \mathbb R$ is a bounded c\`adl\`ag function. Then 
$$ \lim_{n\to\infty} \frac{b-a}{n}\sum_{k=0}^{n-1} f\left(a+\frac{k(b-a)}{n}\right)=\int_a^b f(t)\, dt. $$
The right-hand side of the formula above is understood as the Lebesgue integral. 
\end{proposition}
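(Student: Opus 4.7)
The plan is to realize the Riemann sum on the left-hand side as the Lebesgue integral of a simple step function and then apply the dominated convergence theorem. Define, for each $n \in \mathbb{N}$, the step function $f_n \colon [a,b] \to \mathbb{R}$ by
$$ f_n(t) = f\!\left(a+\tfrac{k(b-a)}{n}\right) \quad \text{for } t \in \left[a+\tfrac{k(b-a)}{n},\, a+\tfrac{(k+1)(b-a)}{n}\right),\quad k=0,1,\dots,n-1, $$
with $f_n(b):=f(b)$. Since each subinterval has Lebesgue measure $(b-a)/n$, one has
$$ \int_a^b f_n(t)\,dt \;=\; \frac{b-a}{n}\sum_{k=0}^{n-1} f\!\left(a+\tfrac{k(b-a)}{n}\right), $$
so the claim reduces to showing that $\int_a^b f_n(t)\,dt \to \int_a^b f(t)\,dt$. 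Note that $f$ is Borel measurable (càdlàg functions are pointwise limits of right-continuous step functions) and bounded, hence in $L^1([a,b])$, so the right-hand side makes sense as a Lebesgue integral.

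The next step is to establish that $f_n(t)\to f(t)$ for almost every $t \in [a,b]$. Recall the standard fact that any càdlàg function on a compact interval has at most countably many points of discontinuity: for each $\varepsilon>0$, the existence of one-sided limits together with compactness forces the set $\{t : |f(t)-f(t-)|>\varepsilon\}$ to be finite, and taking $\varepsilon = 1/m$ and a union over $m$ yields the claim. Hence the set $D$ of discontinuities of $f$ has Lebesgue measure zero. Now fix any continuity point $t \in [a,b) \setminus D$ and set $k_n=\lfloor n(t-a)/(b-a)\rfloor$ and $t_n:=a+k_n(b-a)/n$. Then $t_n \leq t < t_n + (b-a)/n$, so $t_n\to t$; by continuity of $f$ at $t$ we obtain $f_n(t)=f(t_n)\to f(t)$.

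Finally, since $|f_n(t)|\leq \sup_{s\in[a,b]}|f(s)|<\infty$ uniformly in $n$ and $t$, and the constant bound is integrable on $[a,b]$, the Lebesgue dominated convergence theorem yields
$$ \lim_{n\to\infty}\int_a^b f_n(t)\,dt = \int_a^b f(t)\,dt, $$
which is exactly the desired identity. The only non-trivial input is the classical fact that a càdlàg function has only countably many discontinuities; this is the main obstacle, but it is a textbook result. A minor subtlety worth flagging is that the sum samples $f$ at the \emph{left} endpoint of each subinterval, so the approximants $t_n$ approach $t$ from below; this is why we need genuine two-sided continuity (not merely right-continuity) of $f$ at $t$, but this still holds off the null set $D$.
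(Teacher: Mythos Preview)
Your proof is correct and is in fact cleaner than the paper's own argument. Both proofs hinge on the same structural fact about c\`adl\`ag functions---that the set of jumps of size at least $\varepsilon$ is finite, hence the total discontinuity set is countable---but you and the paper exploit it in different ways. You package the Riemann sum as $\int_a^b f_n$, observe that $f_n\to f$ off the (null) set of discontinuities, and invoke dominated convergence. The paper instead carries out a direct $\varepsilon$--$\delta$ estimate: it isolates the finitely many large jumps in a small open set $U$, shows that on each complementary closed interval the function has uniformly small oscillation on short scales, and then bounds the discrepancy between the sum and the integral by $20\varepsilon$ for $n$ large. Your route is shorter and more conceptual; the paper's route is more elementary in that it avoids the dominated convergence theorem and yields an explicit error bound in terms of the jump structure. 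The remark you make at the end about needing two-sided continuity (because the sample points approach from the left) is exactly the right observation and is implicitly what forces the paper to treat the large-jump points separately.
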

The proposition can be proved by standard arguments. For the completeness we elaborate it in Appendix~\ref{sec:app}.

Let $(X_t,\Omega, \mathbb P^{\mathbf x})$, $X_t:\Omega\to \mathbb R^N$,  be a Dunkl process associated with the transition probabilities
\begin{align*}
    P_t(\mathbf x,E)=\int_E h_t(\mathbf x,\mathbf y)\, dw(\mathbf y),    
\end{align*}
that is, a Markov process with  c\`adl\`ag realizations $[0,\infty)\ni t\mapsto X_t(\omega)$ satisfying  
\begin{equation}
    \begin{split}
       & \mathbb P^\mathbf{x}\{\omega\in\Omega: X_{t_1}\in E_1, X_{t_2}\in E_2,\ldots,X_{t_n}\in E_n\}\\
       &=\int_{E_1}\int_{E_2}\ldots\int_{E_n} h_{t_1}(\mathbf{x},\mathbf{x}_1)h_{t_2-t_1}(\mathbf{x}_1,\mathbf{x}_2)\ldots h_{t_n-t_{n-1}}(\mathbf{x}_{n-1},\mathbf{x}_n)\, dw(\mathbf{x}_n)\, dw(\mathbf{x}_{n-1})\ldots dw(\mathbf{x}_1).
    \end{split}
\end{equation}
for any finite sequence $0<t_1<t_t<\ldots<t_n$ and any  measurable sets $E_1,E_2,\ldots,E_n\subseteq \mathbb R^N$ (see R\"osler-Voit~\cite{RV}). 
The formula implies that for a reasonable measurable function $F$ defined on $(\mathbb R^N)^n$  one has 
\begin{equation}\label{eq:FK_pre}
    \begin{split}
       & \mathbb E^\mathbf{x} (F(X_{t_1},X_{t_2},\ldots,X_{t_n}))\\
       &=\int_{(\mathbb R^N)^n} F(\mathbf{x}_1,\mathbf{x}_2,\ldots,\mathbf{x}_n)h_{t_1}(\mathbf{x},\mathbf{x}_1)h_{t_2-t_1}(\mathbf{x}_1,\mathbf{x}_2)\ldots h_{t_n-t_{n-1}}(\mathbf{x}_{n-1},\mathbf{x}_n)\, dw(\mathbf{x}_n)\, dw(\mathbf{x}_{n-1})\ldots dw(\mathbf{x}_1).
    \end{split}
\end{equation}
Assume  that $V\geq 0$ is a bounded continuous function. 
Let $Q_{n,t}(\mathbf x,\mathbf y)$ be as in the proof of Theorem \ref{teo:continuous_k} (see~\eqref{eq:q_n_t} and~\eqref{eq:q_n_t_1}). Let $f\in L^2(dw)$ and $t>0$. 
Putting $t_{k}:=\frac{k}{n}t$, $1 \leq k \leq n$,  and using ~\eqref{eq:FK_pre}, we have 
\begin{equation}
    \begin{split}
         ( H_{t/n}e^{-\frac{t}{n}V})^nf(\mathbf{x})&=\int_{\mathbb{R}^{N}} Q_{n,t}(\mathbf x,\mathbf y)f(\mathbf y)\, dw(\mathbf y)\\
        &= \int_{(\mathbb{R}^N)^n} h_{t/n} (\mathbf{x},\mathbf{x}_1)h_{t/n}(\mathbf{x}_1,\mathbf{x}_2)\ldots h_{t/n}(\mathbf{x}_{n-1}, \mathbf{x}_n)\\
        & \hskip1.5cm \times {\underbrace{e^{-\frac{t}{n} (V(\mathbf{x}_1)+V(\mathbf{x}_2)+\ldots+V(\mathbf{x}_n)) } f(\mathbf{x}_n)}_{F(\mathbf{x}_1,\mathbf{x}_2,\ldots,\mathbf{x}_n)}}\, dw(\mathbf{x}_n)\, dw(\mathbf{x}_{n-1})\, \ldots \, dw(\mathbf{x}_1)\\
        &=E^{\mathbf{x}} (F(X_{t_1},X_{t_2},\ldots,X_{t_n}))\\
        &=E^{\mathbf{x}} \Big[\exp\left(-\frac{t}{n}\sum_{k=1}^n V(X_{t_k})\right)f(X_{t}) \Big].
    \end{split}
\end{equation}
Recall that it was established in the proof of Theorem \ref{teo:continuous_k} that there is a subsequence $\{n_j\}_{j \in \mathbb{N}}$ such that the continuous functions  $Q_{n_j,t}(\mathbf{x},\mathbf{y})\leq h_t(\mathbf x,\mathbf y)$ converge uniformly on compact  subsets to $k^{\{V\}}_t(\mathbf x,\mathbf y)$. Hence 
taking into account integration of  c\`adl\`ag functions (see Proposition \ref{prop:integral_cadlag}), we obtain the following corollary. 

\begin{corollary}[Feynman-Kac formula]
Let $V\geq 0$ be a bounded continuous function. 
Then for $t>0$, $\mathbf x\in \mathbb R^N$ and all $f\in L^2(dw)$ we have  
\begin{equation}\label{eq:Feynman-Kac}
 e^{t(\Delta_k-V)}f(\mathbf x)=E^{\mathbf{x}} \Big[\exp\Big(-\int_0^t V(X_{s})\, ds\Big)f(X_{t}) \Big]   .
\end{equation}

\end{corollary}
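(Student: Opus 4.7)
The plan is to pass to the limit in the identity
\begin{equation*}
(H_{t/n}e^{-\tfrac{t}{n}V})^n f(\mathbf{x}) = \mathbb{E}^{\mathbf{x}}\Big[\exp\Big(-\tfrac{t}{n}\sum_{k=1}^n V(X_{kt/n})\Big) f(X_t)\Big]
\end{equation*}
already derived above, along the subsequence $\{n_j\}$ furnished by the proof of Theorem~\ref{teo:continuous_k}, along which $Q_{n_j,t}(\mathbf{x},\mathbf{y})$ converges to $k^{\{V\}}_t(\mathbf{x},\mathbf{y})$ uniformly on compact sets.

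For the left-hand side, I would invoke the Lebesgue dominated convergence theorem with the integrable majorant $h_t(\mathbf{x},\mathbf{y})|f(\mathbf{y})|$ (integrability follows from $h_t(\mathbf{x},\cdot)\in L^2(dw)$, a consequence of $\int h_t(\mathbf{x},\mathbf{y})^2\,dw(\mathbf{y})=h_{2t}(\mathbf{x},\mathbf{x})<\infty$, combined with the Cauchy--Schwarz inequality and $f\in L^2(dw)$), obtaining pointwise convergence to $\int_{\mathbb{R}^N} k^{\{V\}}_t(\mathbf{x},\mathbf{y}) f(\mathbf{y})\, dw(\mathbf{y}) = e^{t(\Delta_k-V)}f(\mathbf{x})$.

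For the right-hand side, the key step is a pathwise Riemann-sum approximation. For $\mathbb{P}^{\mathbf{x}}$-almost every $\omega\in\Omega$ the trajectory $[0,t]\ni s\mapsto X_s(\omega)$ is c\`adl\`ag, and the continuity of $V$ makes $s\mapsto V(X_s(\omega))$ a bounded c\`adl\`ag function with values in $[0,\|V\|_\infty]$. Proposition~\ref{prop:integral_cadlag} then yields
\begin{equation*}
\lim_{n\to\infty}\tfrac{t}{n}\sum_{k=1}^n V(X_{kt/n}(\omega))=\int_0^t V(X_s(\omega))\,ds
\end{equation*}
for $\mathbb{P}^{\mathbf{x}}$-a.e.\ $\omega$; the discrepancy between the $k=0,\ldots,n-1$ indexing of Proposition~\ref{prop:integral_cadlag} and our $k=1,\ldots,n$ indexing costs only a boundary term bounded by $2t\|V\|_\infty/n$, which vanishes as $n\to\infty$. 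Continuity of the exponential produces a.s.\ convergence of the full integrand; since $V\ge 0$, the exponential factors are bounded by $1$, so the integrand is dominated by $|f(X_t)|$, whose $\mathbb{P}^{\mathbf{x}}$-expectation equals $\int|f(\mathbf{y})|h_t(\mathbf{x},\mathbf{y})\,dw(\mathbf{y})<\infty$ by the same Cauchy--Schwarz argument. A second application of dominated convergence then moves the limit inside $\mathbb{E}^{\mathbf{x}}$, and equating the two limits produces~\eqref{eq:Feynman-Kac}.

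The only non-routine ingredient is the pathwise Riemann-sum convergence for $V\circ X_\cdot$, which is precisely what Proposition~\ref{prop:integral_cadlag} supplies; the remaining work amounts to verifying the c\`adl\`ag property and boundedness of $V\circ X_\cdot$ and locating the integrable dominants for the two applications of the Lebesgue dominated convergence theorem, so the anticipated ``obstacle'' is mild.
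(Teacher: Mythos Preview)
Your proposal is correct and follows essentially the same route as the paper: pass to the limit in the identity $(H_{t/n}e^{-tV/n})^n f(\mathbf{x}) = \mathbb{E}^{\mathbf{x}}[\exp(-\tfrac{t}{n}\sum_{k=1}^n V(X_{kt/n}))f(X_t)]$ along the subsequence $\{n_j\}$ from Theorem~\ref{teo:continuous_k}, using dominated convergence on the left and Proposition~\ref{prop:integral_cadlag} plus dominated convergence on the right. You have simply made explicit the dominants and the harmless indexing shift that the paper leaves implicit.
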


\section{Schr\"odinger semigroups with Green bounded potentials} 

\subsection{Green bounded potentials}\label{sub:Green} In the proposition below we elaborate the equivalences stated in Remark \ref{remark:GG}. 
For a measurable function $V:\mathbb{R}^N \longmapsto [0,\infty)$ and $\mathbf{x} \in \mathbb{R}^N$, let
\begin{equation}\label{eq:mathbf_G}
    \mathbf G(V)(\mathbf x):=\int_0^\infty \int_{\mathbb R^N} w(B(\mathbf x,\sqrt{s}))^{-1} e^{-\| \mathbf x-\mathbf y\|^2/s}V(\mathbf y)\, dw(\mathbf y)\, ds, 
\end{equation}
\begin{equation}
     \mathbf G_1(V)(\mathbf x):= \int_0^\infty \int_{\mathbb R^N} h_s(\mathbf{x},\mathbf{y}) V(\mathbf y)\, dw(\mathbf y)\, ds,
\end{equation}
\begin{equation}\label{eq:mathcal_G}
    \mathcal G(V)(\mathbf x):= \int_0^\infty \int_{\mathbb R^N}w(B(\mathbf x,\sqrt{s}))^{-1} e^{-d (\mathbf x,\mathbf y)^2/s}V(\mathbf y)\, dw(\mathbf y)\, ds. 
    \end{equation}
    
\begin{proposition}\label{prop:Greens}
There are constants $C_1,C_2,C_3>0$ such that for all measurable non-negative functions $V:\mathbb{R}^N \longmapsto [0,\infty)$ one has 
\begin{equation}
    \|\mathbf G(V)\|_{L^\infty}\leq C_1 \| \mathbf G_1(V)\|_{L^\infty}\leq C_2\| \mathcal G(V)\|_{L^\infty} \leq C_3 \|\mathbf G(V)\|_{L^\infty}.
\end{equation}
\end{proposition}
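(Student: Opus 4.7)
The plan is to establish the three inequalities in the chain by combining the two-sided heat kernel bounds of Theorem~\ref{teo:1} with the $G$-invariance of $dw$ and of $h_s$ and the doubling property~\eqref{eq:doubling}. Two elementary properties of $\Lambda(\mathbf{x},\mathbf{y},s)$ from~\eqref{eq:Lambda_def} will be used throughout: it is uniformly bounded above (a sum of at most $|R|^{2|G|}$ terms, each $\le 1$), and $\Lambda(\mathbf{x},\mathbf{y},s)\ge 1$ whenever $\mathbf{x},\mathbf{y}$ lie in the same Weyl chamber, since in that case $\emptyset\in\mathcal A(\mathbf{x},\mathbf{y})$ and $\rho_\emptyset=1$.

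Two of the inequalities are relatively direct. The bound $\|\mathbf G(V)\|_{L^\infty}\le\|\mathcal G(V)\|_{L^\infty}$ is immediate from $d(\mathbf{x},\mathbf{y})\le\|\mathbf{x}-\mathbf{y}\|$. For $\|\mathcal G(V)\|_{L^\infty}\le|G|\,\|\mathbf G(V)\|_{L^\infty}$, I use
$$e^{-d(\mathbf{x},\mathbf{y})^2/s}=\max_{\sigma\in G}e^{-\|\mathbf{x}-\sigma(\mathbf{y})\|^2/s}\le\sum_{\sigma\in G}e^{-\|\mathbf{x}-\sigma(\mathbf{y})\|^2/s},$$
then substitute $\mathbf{y}\mapsto\sigma^{-1}(\mathbf{z})$ in each summand and apply $w(B(\mathbf{x},\sqrt{s}))=w(B(\sigma^{-1}(\mathbf{x}),\sqrt{s}))$ to obtain $\mathcal G(V)(\mathbf{x})\le\sum_{\sigma\in G}\mathbf G(V)(\sigma^{-1}(\mathbf{x}))$. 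For $\|\mathbf G_1(V)\|_{L^\infty}\le C_2\|\mathcal G(V)\|_{L^\infty}$, the upper bound~\eqref{eq:main_claim} with any fixed $c_u\in(0,1/4)$ combined with $\Lambda\le C$ yields $h_s\le Cw(B(\mathbf{x},\sqrt{s}))^{-1}e^{-c_u d(\mathbf{x},\mathbf{y})^2/s}$; the change of variables $s=c_u s'$ rescales the exponent to match $\mathcal G$, and doubling lets me replace $w(B(\mathbf{x},\sqrt{c_u s'}))^{-1}$ by a constant multiple of $w(B(\mathbf{x},\sqrt{s'}))^{-1}$.

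The most substantial inequality is $\|\mathbf G(V)\|_{L^\infty}\le C_1\|\mathbf G_1(V)\|_{L^\infty}$, which by the trivial step reduces to $\|\mathcal G(V)\|_{L^\infty}\le C\|\mathbf G_1(V)\|_{L^\infty}$. For each pair $(\mathbf{x},\mathbf{y})$, Lemma~\ref{lem:realization_of_d} supplies $\sigma\in G$ with $\sigma(\mathbf{y})$ in the Weyl chamber of $\mathbf{x}$ and $\|\mathbf{x}-\sigma(\mathbf{y})\|=d(\mathbf{x},\mathbf{y})$. For that $\sigma$ one has $\Lambda(\mathbf{x},\sigma(\mathbf{y}),s)\ge 1$, so the lower bound~\eqref{eq:main_lower} applied with $c_l=1$ (which is allowed since $1>1/4$) gives
$$w(B(\mathbf{x},\sqrt{s}))^{-1}e^{-d(\mathbf{x},\mathbf{y})^2/s}\le C\,h_s(\mathbf{x},\sigma(\mathbf{y}))\le C\sum_{\tau\in G}h_s(\mathbf{x},\tau(\mathbf{y})).$$
Multiplying by $V(\mathbf{y})$, integrating in $\mathbf{y}$ via $\mathbf{z}=\tau(\mathbf{y})$, using the $G$-invariance identity $h_s(\mathbf{x},\tau(\mathbf{z}))=h_s(\tau^{-1}(\mathbf{x}),\mathbf{z})$, and then integrating in $s$ produces $\mathcal G(V)(\mathbf{x})\le C\sum_{\tau\in G}\mathbf G_1(V)(\tau^{-1}(\mathbf{x}))\le C|G|\,\|\mathbf G_1(V)\|_{L^\infty}$.

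The main obstacle is that the factor $\Lambda$ in Theorem~\ref{teo:1} is not uniformly bounded below, so a naive pointwise estimate $h_s\ge cw(B(\mathbf{x},\sqrt{s}))^{-1}e^{-c'\|\mathbf{x}-\mathbf{y}\|^2/s}$ is unavailable. The trick that bypasses this is to transport $\mathbf{y}$ by a carefully chosen element of $G$ into the chamber of $\mathbf{x}$, where $\Lambda\ge 1$ holds automatically, and to absorb the resulting finite sum over $G$ into a constant by invoking the $G$-invariance of both $dw$ and $h_s$.
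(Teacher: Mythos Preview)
Your proof is correct, and for two of the three inequalities it coincides with the paper's argument: the paper also uses the upper bound $h_s\le Cw(B(\mathbf x,\sqrt s))^{-1}e^{-c\,d(\mathbf x,\mathbf y)^2/s}$ to get $\mathbf G_1\le C\mathcal G$, and also uses the decomposition $e^{-c\,d(\mathbf x,\mathbf y)^2/s}\le\sum_{\sigma\in G}e^{-c\|\sigma(\mathbf x)-\mathbf y\|^2/s}$ together with $G$-invariance of $w$ to get $\mathcal G\le C\mathbf G$.

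The genuine difference is in the first inequality $\|\mathbf G(V)\|_{L^\infty}\le C_1\|\mathbf G_1(V)\|_{L^\infty}$. The paper simply invokes the pointwise lower bound
\[
C^{-1}w(B(\mathbf x,\sqrt s))^{-1}e^{-c^{-1}\|\mathbf x-\mathbf y\|^2/s}\le h_s(\mathbf x,\mathbf y),
\]
which it records as a consequence of Theorem~\ref{teo:1} (with a parallel citation to \cite[Theorems 4.1 and 4.4]{ADzH}); after a change of variable in $s$ this gives $\mathbf G\le C\mathbf G_1$ pointwise. Your belief that ``a naive pointwise estimate $h_s\ge cw(B(\mathbf x,\sqrt s))^{-1}e^{-c'\|\mathbf x-\mathbf y\|^2/s}$ is unavailable'' is therefore a misconception: this bound \emph{is} available, and the paper uses it directly. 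Your chamber detour---choosing $\sigma$ so that $\sigma(\mathbf y)$ lies in the chamber of $\mathbf x$, using $\Lambda(\mathbf x,\sigma(\mathbf y),s)\ge 1$, then summing over $G$ and exploiting the $G$-invariance $h_s(\sigma\mathbf x,\sigma\mathbf y)=h_s(\mathbf x,\mathbf y)$---is nonetheless a valid and self-contained alternative that establishes the slightly stronger statement $\|\mathcal G(V)\|_{L^\infty}\le C\|\mathbf G_1(V)\|_{L^\infty}$ without appealing to that simple lower bound. The paper's route is shorter; yours has the merit of using only the detailed $\Lambda$-bounds of Theorem~\ref{teo:1} in their raw form.
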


\begin{proof} It follows from Theorem~\ref{teo:1} that there are constants $C,c>0$ such that for all $\mathbf{x},\mathbf{y} \in \mathbb{R}^N$ and $s>0$ we have 
\begin{align}\label{eq:simple_u_l}
    C^{-1}w(B(\mathbf x,\sqrt{s}))^{-1} e^{-c^{-1}\| \mathbf x-\mathbf y\|^2/s}\leq h_s(\mathbf x,\mathbf y)\leq C w(B(\mathbf x,\sqrt{s}))^{-1} e^{-c d(\mathbf x,\mathbf y)^2/s} 
\end{align}
(see also \cite[Theorems 4.1 and 4.4]{ADzH}).  Further, by the definition of $d(\mathbf x,\mathbf y)$ (see~\eqref{eq:d(x,y)}), 
\begin{equation}\label{eq:suma_G}
     w(B(\mathbf x,\sqrt{s}))^{-1} e^{-c d(\mathbf x,\mathbf y)^2/s}\leq \sum_{\sigma\in G}  w(B(\mathbf x,\sqrt{s}))^{-1} e^{-c \|\sigma (\mathbf x)-\mathbf y\|^2/s}. 
\end{equation}
The proposition is a direct consequence of the inequalities \eqref{eq:simple_u_l}, \eqref{eq:suma_G}, and the doubling property of the measure $dw$ (see~\eqref{eq:doubling}). 
\end{proof}

In order to establish Theorem~\ref{teo:main}, we prove the implications: ~\eqref{item:h_ct_leq_k_t}$\implies$~\eqref{item:int_k_geq_delta} (in Lemma~\ref{lem:e_implies_a}), then  
~\eqref{item:int_k_geq_delta}$\implies$~\eqref{item:potential_Green_bounded} (in Lemma~\ref{lem:a_implies_c}), and finally,  \eqref{item:potential_Green_bounded}$\implies$~\eqref{item:h_ct_leq_k_t} (in Subsection~\ref{subsection:main}). We prove
\eqref{item:potential_Green_bounded}$\implies$~\eqref{item:h_ct_leq_k_t} 
 in the separate subsection, because it is relatively more involving and it uses the  heat kernel estimates \eqref{eq:main_lower}, \eqref{eq:main_claim},  and the Feynman--Kac formula (see Section \ref{sec:F-K}). 

\subsection{Proofs of the implications\texorpdfstring{~\eqref{item:h_ct_leq_k_t}$\implies$~\eqref{item:int_k_geq_delta}}{(a)=>(b)} and\texorpdfstring{~\eqref{item:int_k_geq_delta}$\implies$~\eqref{item:potential_Green_bounded}}{(b)=>(c)}}
\begin{lemma}\label{lem:e_implies_a}
Assume that $V:\mathbb{R}^N \longmapsto [0,\infty)$, $V\in L^1_{\rm loc} (dw)$. Assume that there are constants $C,c>0$ such that for all $\mathbf{x},\mathbf{y} \in \mathbb{R}^N$ and $t>0$ we have 
\begin{equation}\label{eq:e_implies_a}
    h_{ct}(\mathbf{x},\mathbf{y})\leq C k_t^{\{V\}}(\mathbf{x},\mathbf{y}).
\end{equation}
Then there is a constant $\delta>0$ such for all $\mathbf{x} \in \mathbb{R}^N$ and $t>0$ we have
\begin{align*}
    \int_{\mathbb{R}^N} k_t^{\{V\}}(\mathbf x,\mathbf y)\, dw(\mathbf y) > \delta.
\end{align*}
\end{lemma}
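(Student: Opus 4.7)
The plan is a direct one-line argument using the hypothesis together with the stochastic completeness of the Dunkl heat semigroup, which is recorded in the preliminaries as \eqref{eq:probabilistic}. Specifically, I would integrate the pointwise lower bound \eqref{eq:e_implies_a} against $dw(\mathbf{y})$ to transfer the mass normalization of $h_{ct}(\mathbf{x},\cdot)$ to $k_t^{\{V\}}(\mathbf{x},\cdot)$.

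More concretely, fix $\mathbf{x} \in \mathbb{R}^N$ and $t>0$. The hypothesis \eqref{eq:e_implies_a} says that $h_{ct}(\mathbf{x},\mathbf{y}) \leq C\, k_t^{\{V\}}(\mathbf{x},\mathbf{y})$ for every $\mathbf{y} \in \mathbb{R}^N$. Since both kernels are non-negative, integrating in $\mathbf{y}$ with respect to $dw$ and invoking the identity $\int_{\mathbb{R}^N} h_{ct}(\mathbf{x},\mathbf{y})\, dw(\mathbf{y}) = 1$ from \eqref{eq:probabilistic} yields
\begin{equation*}
    1 = \int_{\mathbb{R}^N} h_{ct}(\mathbf{x},\mathbf{y})\, dw(\mathbf{y}) \leq C \int_{\mathbb{R}^N} k_t^{\{V\}}(\mathbf{x},\mathbf{y})\, dw(\mathbf{y}).
\end{equation*}
Hence one may take $\delta = (2C)^{-1}$ (or any number strictly less than $C^{-1}$) and conclude that $\int_{\mathbb{R}^N} k_t^{\{V\}}(\mathbf{x},\mathbf{y})\, dw(\mathbf{y}) > \delta$ uniformly in $\mathbf{x}$ and $t$.

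There is no real obstacle here; the lemma is merely a bookkeeping step isolating the easiest of the three implications in Theorem~\ref{teo:main}. The only thing to notice is that the constant $c$ from the hypothesis plays no role, because the total mass of $h_s(\mathbf{x},\cdot)$ is equal to $1$ for every $s>0$, so the time rescaling $t \mapsto ct$ is harmless.
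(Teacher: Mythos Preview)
Your argument is correct and coincides with the paper's own proof, which simply integrates \eqref{eq:e_implies_a} in $\mathbf{y}$ and applies \eqref{eq:probabilistic}.
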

\begin{proof}
It is enough to integrate~\eqref{eq:e_implies_a} with respect to $dw(\mathbf{y})$ and apply~\eqref{eq:probabilistic}.
\end{proof}

\begin{lemma}
Assume that $V:\mathbb{R}^N \longmapsto [0,\infty)$ is measurable and bounded. Then for all $t>0$ and $\mathbf{x},\mathbf{y} \in \mathbb{R}^N$ we have
\begin{equation}
    \label{eq:perturbation}
    h_t(\mathbf x,\mathbf y)=k^{\{V\}}_t(\mathbf x,\mathbf y)+\int_0^t \int_{\mathbb{R}^N} h_s(\mathbf x,\mathbf z)V(\mathbf z)k^{\{V\}}_{t-s}(\mathbf z,\mathbf y)\, dw(\mathbf z)\,ds.
\end{equation}
\end{lemma}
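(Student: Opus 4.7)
\smallskip

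\noindent\textbf{Proof plan.} The strategy is to apply the operator-level Duhamel identity of Theorem~\ref{teo:AB} and then pass to kernels. Set $A=\Delta_k$, a generator of the semigroup $\{H_t\}_{t\geq 0}$ on $L^2(dw)$, and $B=-V$, which is a bounded self-adjoint operator on $L^2(dw)$ because $V$ is bounded. Then $A+B=\Delta_k-V$, so the semigroup generated by $A+B$ is exactly $\{e^{t(\Delta_k-V)}\}_{t\geq 0}$, whose kernel is $k_t^{\{V\}}(\mathbf x,\mathbf y)$ by Theorem~\ref{teo:continuous_k}. The Duhamel formula of Theorem~\ref{teo:AB} applied to an arbitrary $f\in L^2(dw)$ then gives
\begin{equation*}
    H_t f \;=\; e^{t(\Delta_k-V)}f \;+\;\int_0^t H_{t-s}\bigl(V\cdot e^{s(\Delta_k-V)}f\bigr)\,ds,
\end{equation*}
with the integral converging in $L^2(dw)$.

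\smallskip

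\noindent The next step is to rewrite this identity at the level of integral kernels. For fixed $\mathbf x\in\mathbb R^N$, I would insert the kernel representation for both $H_{t-s}$ and $e^{s(\Delta_k-V)}$ and apply Fubini. The application of Fubini is justified by the pointwise domination $0\leq k_s^{\{V\}}(\mathbf z,\mathbf y)\leq h_s(\mathbf z,\mathbf y)$ from~\eqref{eq:k_very_basic} together with the boundedness of $V$ and the semigroup property of $h_s$, which yield the uniform bound
\begin{equation*}
    \int_0^t\!\int_{\mathbb R^N} h_{t-s}(\mathbf x,\mathbf z)V(\mathbf z) k_s^{\{V\}}(\mathbf z,\mathbf y)\,dw(\mathbf z)\,ds \;\leq\; t\,\|V\|_\infty\, h_t(\mathbf x,\mathbf y).
\end{equation*}
Rearranging and substituting $s\mapsto t-s$ in the time variable identifies the $L^2(dw(\mathbf y))$-function against which an arbitrary $f$ is paired as
\begin{equation*}
    k_t^{\{V\}}(\mathbf x,\mathbf y)+\int_0^t\!\int_{\mathbb R^N} h_s(\mathbf x,\mathbf z)V(\mathbf z)k_{t-s}^{\{V\}}(\mathbf z,\mathbf y)\,dw(\mathbf z)\,ds.
\end{equation*}
Because $f\in L^2(dw)$ is arbitrary, this function equals $h_t(\mathbf x,\mathbf y)$ for $dw(\mathbf y)$-a.e.\ $\mathbf y$.

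\smallskip

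\noindent Finally, I would upgrade the almost-everywhere equality to a genuine pointwise identity. Both $h_t(\mathbf x,\mathbf y)$ (by~\eqref{eq:heat_formula}) and $k_t^{\{V\}}(\mathbf x,\mathbf y)$ (by Theorem~\ref{teo:continuous_k}) are continuous in $\mathbf y$. Continuity in $\mathbf y$ of the double integral follows from dominated convergence: the integrand $h_s(\mathbf x,\mathbf z)V(\mathbf z)k_{t-s}^{\{V\}}(\mathbf z,\mathbf y)$ is continuous in $\mathbf y$ for every fixed $(s,\mathbf z)$ (by the smoothness of $k_{t-s}^{\{V\}}$ in Theorem~\ref{teo:continuous_k}) and is dominated by the integrable envelope $\|V\|_\infty h_s(\mathbf x,\mathbf z)h_{t-s}(\mathbf z,\mathbf y_0+\text{small})$, for which the semigroup bound above gives a uniform majorant near any $\mathbf y_0$. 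Two continuous functions that agree a.e.\ agree everywhere, establishing~\eqref{eq:perturbation}.

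\smallskip

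\noindent\textbf{Main obstacle.} The only nontrivial point is making sure Fubini and the continuity upgrade are genuinely legitimate; once the uniform bound $t\|V\|_\infty h_t(\mathbf x,\mathbf y)$ is in hand, both become routine. There is no hidden analytic difficulty since $V$ is assumed bounded here—the subtler regularity issues (unbounded $V$, smoothness in $t$) are postponed to the later passage $V_n\uparrow V$ already handled in Theorem~\ref{teo:kernel_etL}.
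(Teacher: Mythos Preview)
Your approach is exactly what the paper intends: its entire proof reads ``See Theorem~\ref{teo:AB},'' and you have correctly unpacked that reference by applying the Duhamel formula with $A=\Delta_k$, $B=-V$, passing to kernels via Fubini (justified by the bound $0\le k^{\{V\}}_s\le h_s$ and the semigroup identity), and then upgrading from a.e.\ to pointwise equality using continuity. The only place to tighten is the dominated-convergence majorant for the continuity of the double integral in $\mathbf y$: your envelope ``$h_{t-s}(\mathbf z,\mathbf y_0+\text{small})$'' is not uniform in $s$ near $s=t$, but this is easily repaired either by invoking the Lipschitz estimate~\eqref{eq:lipschitz_k_t} for $k^{\{V\}}_{t-s}$ directly, or by splitting $\int_0^t=\int_0^{t/2}+\int_{t/2}^t$ and using Lemma~\ref{lem:small_deformation} on each piece.
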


\begin{proof}
See Theorem~\ref{teo:AB}. 
\end{proof}

\begin{lemma}\label{lem:a_implies_c}
Assume that $V:\mathbb{R}^N \longmapsto [0,\infty)$, $V\in L^1_{\rm loc} (dw)$. Assume that there is $\delta>0$ such for all $\mathbf{x} \in \mathbb{R}^N$ and $t>0$ we have
\begin{equation}\label{eq:assumption_bc}
    \int_{\mathbb{R}^N} k_t^{\{V\}}(\mathbf x,\mathbf y)\, dw(\mathbf y) > \delta.
\end{equation}
Then $V$ is Green bounded. 
\end{lemma}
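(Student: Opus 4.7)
The plan is to derive the Green-boundedness of $V$ from the Duhamel-type perturbation formula~\eqref{eq:perturbation}, which is known for bounded potentials, by first working with the truncations $V_n=\min(V,n)$ and then passing to the limit.

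First, for each $n$, apply \eqref{eq:perturbation} with $V_n$ in place of $V$ and integrate both sides in $\mathbf y$ against $dw(\mathbf y)$. Using Fubini, this yields
\begin{equation*}
\int_{\mathbb{R}^N}h_t(\mathbf x,\mathbf y)\,dw(\mathbf y) \;=\; \int_{\mathbb{R}^N} k^{\{V_n\}}_t(\mathbf x,\mathbf y)\,dw(\mathbf y) + \int_0^t\!\int_{\mathbb{R}^N} h_s(\mathbf x,\mathbf z)V_n(\mathbf z)\Phi^{\{V_n\}}_{t-s}(\mathbf z)\,dw(\mathbf z)\,ds,
\end{equation*}
where $\Phi^{\{V_n\}}_{\tau}(\mathbf z):=\int_{\mathbb{R}^N} k^{\{V_n\}}_{\tau}(\mathbf z,\mathbf y)\,dw(\mathbf y)$. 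By \eqref{eq:probabilistic} the left-hand side equals $1$, and since $k^{\{V_n\}}_t\leq h_t$ we also have $\int k^{\{V_n\}}_t(\mathbf x,\mathbf y)\,dw(\mathbf y)\leq 1$, so the first term on the right is nonnegative and bounded by $1$.

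Second, use monotonicity (Corollary~\ref{coro:mono_1}): since $V_n\leq V$, we have $k^{\{V_n\}}_{\tau}(\mathbf z,\mathbf y)\geq k^{\{V\}}_{\tau}(\mathbf z,\mathbf y)$ for every $\tau>0$, hence the hypothesis \eqref{eq:assumption_bc} gives $\Phi^{\{V_n\}}_{t-s}(\mathbf z)\geq \delta$ for every $\mathbf z\in\mathbb{R}^N$ and every $s\in(0,t)$. Plugging this into the identity above and rearranging, one obtains
\begin{equation*}
\delta \int_0^t\!\int_{\mathbb{R}^N} h_s(\mathbf x,\mathbf z)V_n(\mathbf z)\,dw(\mathbf z)\,ds \;\leq\; 1
\end{equation*}
uniformly in $\mathbf x\in\mathbb{R}^N$ and $n\in\mathbb{N}$.

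Finally, since $V_n\nearrow V$ pointwise and $h_s(\mathbf x,\mathbf z)\geq 0$, the monotone convergence theorem gives
\begin{equation*}
\delta \int_0^t\!\int_{\mathbb{R}^N} h_s(\mathbf x,\mathbf z)V(\mathbf z)\,dw(\mathbf z)\,ds \;\leq\; 1
\end{equation*}
for every $t>0$ and $\mathbf x\in\mathbb{R}^N$; letting $t\to\infty$ (monotone convergence again, in the $s$-variable) yields $\mathbf G_1(V)(\mathbf x)\leq\delta^{-1}$, so $\sup_{\mathbf x}\mathbf G_1(V)(\mathbf x)<\infty$, which is the Green-boundedness of $V$ in the form of condition~\eqref{item:potential_Green_bounded} (equivalently, via Proposition~\ref{prop:Greens}, of (c') and (c'')). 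The only non-routine point is the approximation step: one must be sure that the perturbation identity, a priori available only for bounded potentials, transfers to $V$ after integration in $\mathbf y$, which is precisely why integrating first (to remove the dependence on $\mathbf y$) and then letting $n\to\infty$ using monotone convergence is the natural order of operations.
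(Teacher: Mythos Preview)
Your proof is correct and follows essentially the same route as the paper: truncate to $V_n=\min(V,n)$, integrate the perturbation identity~\eqref{eq:perturbation} in $\mathbf y$, use the monotonicity $k^{\{V_n\}}\ge k^{\{V\}}$ together with the hypothesis~\eqref{eq:assumption_bc} to bound $\int k^{\{V_n\}}_{t-s}(\mathbf z,\mathbf y)\,dw(\mathbf y)\ge\delta$, and then pass to the limit via monotone convergence. The only cosmetic difference is the order in which you take the two limits (you let $n\to\infty$ before $t\to\infty$, the paper does the reverse), which is immaterial since both are monotone.
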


\begin{proof}
The proof is standard. Let $V_n=\min (V,n)$, $n \in \mathbb{N}$. Recall that   $k_t^{\{V_n\}}(\mathbf x,\mathbf y)\geq k_t^{\{V\}}(\mathbf x,\mathbf y)$ (see Corollaries~\ref{coro:mono} and~\ref{coro:mono_1}).  By the perturbation formula~\eqref{eq:perturbation} applied to $k_t^{\{V_n\}}(\mathbf{x},\mathbf{y})$,~\eqref{eq:probabilistic}, and the assumption~\eqref{eq:assumption_bc} we have
\begin{equation*}
    \begin{split}
        1 &\geq \int_{\mathbb R^N}\int_0^t \int_{\mathbb R^N} h_{s} (\mathbf x,\mathbf z)V_n(\mathbf z)k_{t-s}^{\{V_n\}}(\mathbf z,\mathbf y)\, dw(\mathbf z)\, ds\, dw(\mathbf y) 
        \geq \delta \int_0^t \int_{\mathbb R^N}h_s(\mathbf x,\mathbf z) V_n(\mathbf z)\, dw(\mathbf z)\, ds\\
    \end{split}
\end{equation*}
with $\delta$ independent of $n \in \mathbb{N}$. Letting $t\to\infty$, we obtain the  bound  $\|\mathbf G_1(V_n)\|_{L^\infty}\leq \delta^{-1}$.    Now, letting $n\to\infty$, we get the lemma by applying the Lebesgue monotone convergence theorem and Proposition~\ref{prop:Greens}. 
\end{proof}

\subsection{Implication\texorpdfstring{~\eqref{item:potential_Green_bounded}$\implies$~\eqref{item:h_ct_leq_k_t}}{(c)=>(a)}}\label{subsection:main}
In order to prove the implication we adapt to the Dunkl setting general patterns of  proofs of lower bounds  for the classical Schr\"odinger operators or Bessel-Schr\"odinger operators (see~\cite{Sem},~\cite{Dziub-Preisner}).  Thus, first we prove the lower bounds in the case of continuous and bounded $V$ with the property  $\| \mathcal G(V)\|$ being small enough. Then we extend the lower estimates to all non-negative Green bounded potentials $V$. The main difficulties we face concern the fact that the upper and lower estimates of the Dunkl heat kernel $h_t(\mathbf x,\mathbf y)$ have rather complex forms which involve both - the orbit distance $d(\mathbf x,\mathbf y) $  and the Euclidean distances  $\| \mathbf x-\sigma_{(\alpha_1,\alpha_2,\ldots,\alpha_j)}(\mathbf y)\|$ contained in the definition of the function $\Lambda$ (see~\eqref{eq:Lambda_def}). To this end we need a preparation. 

For $\mathbf{x},\mathbf{y} \in \mathbb{R}^N$ and $t>0$ we set
\begin{equation}\label{eq:G_2_variables}
    \mathcal G_t(\mathbf x,\mathbf y):=w(B(\mathbf y,\sqrt{t}))^{-1} e^{-d(\mathbf x,\mathbf y)^2/t}.
\end{equation}

Let us begin with a proposition concerning the properties of the generalized heat kernel. In its proof, the specific generalized heat kernel bounds from Theorem~\ref{teo:1} are utilized.

\begin{proposition}\label{propo:glue_together}
There are constants $C_1>0$, $c_1>1$, such that for all $0<s \leq t/2$ and all $\mathbf{x},\mathbf{y},\mathbf{z}\in \mathbb R^N$ one has 
\begin{equation}
    h_{t-s}(\mathbf{x},\mathbf{z})h_s(\mathbf{z},\mathbf{y})\leq C_{1}  h_{{c_{1}} t}(\mathbf{x},\mathbf{y})\mathcal G_{c_1 s}(\mathbf{z},\mathbf{y}).
\end{equation}
\end{proposition}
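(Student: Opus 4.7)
The plan is to apply Theorem~\ref{teo:1} directly: the upper bounds to $h_{t-s}(\mathbf{x},\mathbf{z})$ and $h_{s}(\mathbf{y},\mathbf{z}) = h_s(\mathbf{z},\mathbf{y})$ (centred at $\mathbf{y}$, using symmetry), and the lower bound to $h_{c_1 t}(\mathbf{x},\mathbf{y})$. The resulting ratio decomposes into three independent comparisons: (i) a volume ratio, handled by the doubling property~\eqref{eq:doubling}; (ii) a Gaussian-exponent comparison, handled by the triangle inequality $d(\mathbf{x},\mathbf{y})^2 \leq 2 d(\mathbf{x},\mathbf{z})^2 + 2 d(\mathbf{z},\mathbf{y})^2$ together with $s \leq t/2$, for which a choice $c_1 \geq (c_l+1)/c_u$ suffices; and (iii) a comparison $\Lambda(\mathbf{x},\mathbf{z},t-s)\,\Lambda(\mathbf{y},\mathbf{z},s) \lesssim \Lambda(\mathbf{x},\mathbf{y},c_1 t)$ of the polynomial-correction factors.

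The first two comparisons are routine once $c_1$ is chosen appropriately. The third is the crux of the argument. I would establish it by constructing, for each pair of admissible sequences $\boldsymbol\alpha\in\mathcal A(\mathbf{x},\mathbf{z})$ and $\boldsymbol\beta\in\mathcal A(\mathbf{y},\mathbf{z})$, a corresponding admissible sequence $\boldsymbol\gamma\in\mathcal A(\mathbf{x},\mathbf{y})$, and then comparing $\rho_{\boldsymbol\gamma}(\mathbf{x},\mathbf{y},c_1 t)$ with $\rho_{\boldsymbol\alpha}(\mathbf{x},\mathbf{z},t-s)\,\rho_{\boldsymbol\beta}(\mathbf{y},\mathbf{z},s)$ factor-by-factor. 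Concretely, the concatenation $\boldsymbol\gamma = (\beta_\ell,\beta_{\ell-1},\ldots,\beta_1,\alpha_1,\ldots,\alpha_m)$ gives $\sigma_{\boldsymbol\gamma} = \sigma_{\boldsymbol\alpha}\sigma_{\boldsymbol\beta}^{-1}$, which maps the chamber of $\mathbf{y}$ onto the chamber of $\mathbf{x}$, so $\boldsymbol\gamma\in\mathcal A(\mathbf{x},\mathbf{y})$; if $\ell(\boldsymbol\gamma) > 2|G|$ it can be shortened using the reduction sketched in the Remark after Theorem~\ref{teo:1}. Each factor of $\rho_{\boldsymbol\gamma}$ involving $\sigma_{\alpha_k}\circ\cdots\circ\sigma_{\alpha_1}\circ\sigma_{\boldsymbol\beta}^{-1}(\mathbf{y})$ can then be bounded below by the corresponding factor of $\rho_{\boldsymbol\alpha}$ at $\sigma_{\alpha_k}\circ\cdots\circ\sigma_{\alpha_1}(\mathbf{z})$ using the identity $\|\sigma_{\boldsymbol\beta}^{-1}(\mathbf{y})-\mathbf{z}\| = d(\mathbf{y},\mathbf{z})$ from Lemma~\ref{lem:realization_of_d}, the triangle inequality, and the isometry property of reflections.

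The main obstacle is precisely this $\Lambda$-comparison: the product structure of $\rho$ forces one to control all intermediate factors simultaneously, and the three scales $\sqrt{t-s}$, $\sqrt{s}$ and $\sqrt{c_1 t}$ enter asymmetrically. A fallback route, closer in spirit to the proof of Lemma~\ref{lem:small_deformation}, is to work with R\"osler's formula~\eqref{heat:Rosler} directly: choose $\sigma\in G$ so that $\sigma(\mathbf{y})$ lies in the Weyl chamber containing $\mathbf{z}$ (so $\|\sigma(\mathbf{y})-\mathbf{z}\| = d(\mathbf{z},\mathbf{y})$) and apply $(a+b)^2 \leq (1+\epsilon)a^2 + (1+\epsilon^{-1})b^2$ with $a = \|\mathbf{z} - \sigma\eta\|$, $b = d(\mathbf{z},\mathbf{y})$, using $\|\sigma\eta\| = \|\eta\| \leq \|\mathbf{x}\|$, to arrive at a pointwise inequality between $A(\mathbf{x},\mathbf{y},\eta)^2$ and $A(\mathbf{x},\mathbf{z},\sigma^{-1}\eta)^2$. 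After integrating, combining with Step (i), and absorbing the surplus $e^{c\, d(\mathbf{z},\mathbf{y})^2/(c_1 s)}$ into $\mathcal G_{c_1 s}(\mathbf{z},\mathbf{y})$ by enlarging $c_1$, one gets the claim. This second route avoids the combinatorial comparison of $\Lambda$ at the cost of some delicate bookkeeping with the measure $\mu_\mathbf{x}$, which is not in general $G$-invariant.
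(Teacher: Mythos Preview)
Both of your proposed routes have a genuine gap, and the paper's argument proceeds by a different mechanism that sidesteps the obstacle you identify.

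In your first approach, the bare inequality $\Lambda(\mathbf{x},\mathbf{z},t-s)\,\Lambda(\mathbf{y},\mathbf{z},s)\lesssim\Lambda(\mathbf{x},\mathbf{y},c_1t)$ is not established by the concatenation $\boldsymbol\gamma=(\beta_\ell,\dots,\beta_1,\alpha_1,\dots,\alpha_m)$. Your factor-by-factor comparison handles only the $\alpha$-half of $\rho_{\boldsymbol\gamma}$: those factors involve $\|\mathbf{x}-\sigma_{\alpha_k}\cdots\sigma_{\alpha_1}\sigma_{\boldsymbol\beta}^{-1}(\mathbf{y})\|$, and the identity $\|\sigma_{\boldsymbol\beta}^{-1}(\mathbf{y})-\mathbf{z}\|=d(\mathbf{y},\mathbf{z})$ does allow a match with the corresponding factors of $\rho_{\boldsymbol\alpha}$. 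But the first $\ell$ factors of $\rho_{\boldsymbol\gamma}$ are of the form $\bigl(1+\|\mathbf{x}-\sigma_{\beta_{\ell-k+1}}\cdots\sigma_{\beta_\ell}(\mathbf{y})\|/\sqrt{c_1t}\bigr)^{-2}$, i.e.\ distances from $\mathbf{x}$ to partial reflections of $\mathbf{y}$, whereas the factors of $\rho_{\boldsymbol\beta}(\mathbf{y},\mathbf{z},s)$ are distances from $\mathbf{y}$ to partial reflections of $\mathbf{z}$; there is no usable correspondence, and the single factor $(1+\|\mathbf{x}-\mathbf{z}\|/\sqrt{t-s})^{-2}$ from $\rho_{\boldsymbol\alpha}$ cannot absorb all $\ell$ of them. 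In the fallback approach, the substitution $\eta\mapsto\sigma^{-1}\eta$ is precisely what is needed to turn $\|\mathbf{y}-\eta\|$ into $\|\mathbf{z}-\eta\|+d(\mathbf{z},\mathbf{y})$ rather than $\|\mathbf{z}-\eta\|+\|\mathbf{z}-\mathbf{y}\|$, and since $\mu_{\mathbf{x}}$ is not $G$-invariant this is more than ``delicate bookkeeping''---it is the whole point.

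The paper never attempts a direct $\Lambda$-comparison. Instead it (i) uses Lemma~\ref{lem:small_deformation} to replace $\mathbf{z}$ by an arbitrary $\mathbf{z}'\in B(\mathbf{z},\sqrt{s})$ in \emph{both} factors, at the cost of dilating time to $c_0(t-s)$ and $c_0s$; (ii) applies the upper bound of Theorem~\ref{teo:1}, peels off a piece $e^{-c\,d(\mathbf{z},\mathbf{y})^2/s}$ from the Gaussian (using $\|\mathbf{z}-\mathbf{z}'\|\le\sqrt{s}$), and then applies the \emph{lower} bound of Theorem~\ref{teo:1} at a larger time $c_1$ to rewrite what remains as $C\,w(B(\mathbf{z},\sqrt{s}))\,h_{c_1(t-s)}(\mathbf{x},\mathbf{z}')\,h_{c_1s}(\mathbf{z}',\mathbf{y})$; (iii) since this holds uniformly in $\mathbf{z}'$, averages over $B(\mathbf{z},\sqrt{s})$---the volume factor cancels---extends the integral to $\mathbb{R}^N$, and invokes the semigroup property to collapse the product to $h_{c_1t}(\mathbf{x},\mathbf{y})$. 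The $\Lambda$-factors are never compared across the three points; they ride along inside genuine heat kernels and are merged by the semigroup identity. This averaging-plus-semigroup trick is the missing idea.
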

\begin{proof}
Let $c_0>1$. By Lemma~\ref{lem:small_deformation}, for all $\mathbf{z}' \in \mathbb{R}^N$ such that $\|\mathbf{z}-\mathbf{z}'\| \leq \sqrt{s} \leq \sqrt{t-s}$ we have
\begin{equation}\label{eq:uni_1}
    h_{t-s}(\mathbf{x},\mathbf{z})h_s(\mathbf{z},\mathbf{y}) \leq C_0^2h_{c_0(t-s)}(\mathbf{x},\mathbf{z}')h_{c_0s}(\mathbf{z}',\mathbf{y}).
\end{equation}
Note that 
$$ e^{-c_u\frac{d(\mathbf z', \mathbf y)^2}{c_0s}}\leq C  e^{-c_u\frac{d(\mathbf z', \mathbf y)^2}{2c_0s}} e^{-c_u\frac{d(\mathbf z, \mathbf y)^2}{4c_0s}} \quad \text{for } \|\mathbf z-\mathbf z'\|\leq \sqrt{s}.$$
Hence, applying \eqref{eq:main_claim},  ~\eqref{eq:Lambda_2t}, the doubling property of $dw$  (see~\eqref{eq:doubling}), and Theorem~\ref{teo:1}, we get
\begin{equation}\label{eq:uni_2}
    \begin{split}
        &h_{c_0(t-s)}(\mathbf{x},\mathbf{z}')h_{c_0s}(\mathbf{z}',\mathbf{y}) \\&\leq Cw(B(\mathbf{z}',\sqrt{c_0(t-s)}))^{-1}w(B(\mathbf{z}',\sqrt{c_0s}))^{-1}\Lambda(\mathbf{x},\mathbf{z}',c_0(t-s))\Lambda(\mathbf{z}',\mathbf{y},c_0s)e^{-c_{u}\frac{d(\mathbf{x},\mathbf{z}')^2}{c_0(t-s)}-c_{u}\frac{d(\mathbf{z}',\mathbf{y})^2}{c_0s}}\\&\leq C \left(w(B(\mathbf{z}',\sqrt{t-s}))^{-1}\Lambda(\mathbf{x},\mathbf{z}',t-s)\Lambda(\mathbf{z}',\mathbf{y},s)e^{-c_u\frac{d(\mathbf{x},\mathbf{z}')^2}{2c_0(t-s)}-c_u\frac{d(\mathbf{z}',\mathbf{y})^2}{2c_0s}}\right)\left(w(B(\mathbf{z},\sqrt{s}))^{-1}e^{-c_{u}\frac{d(\mathbf{y},\mathbf{z})^{2}}{{4c_0s}}}\right)\\&\leq C w(B(\mathbf{z},\sqrt{s})) \mathcal{G}_{c_1s}(\mathbf{y},\mathbf{z})h_{c_1(t-s)}(\mathbf{x},\mathbf{z}')h_{c_1s}(\mathbf{z}',\mathbf{y}).
    \end{split}
\end{equation}
Since the estimates~\eqref{eq:uni_1} and~\eqref{eq:uni_2} are given uniformly on $\mathbf{z}' \in B(\mathbf{z}, \sqrt{s})$, taking their mean over the ball $B(\mathbf{z}, \sqrt{s})$ we get
\begin{align*}
    h_{t-s}(\mathbf{x},\mathbf{z})h_s(\mathbf{z},\mathbf{y}) &\leq C\mathcal{G}_{c_1s}(\mathbf{y},\mathbf{z})\int_{B(\mathbf{z},\sqrt{s})}h_{c_1(t-s)}(\mathbf{x},\mathbf{z}')h_{c_1 s}(\mathbf{z}',\mathbf{y})\,dw(\mathbf{z}') \\& \leq C\mathcal{G}_{c_1 s}(\mathbf{y},\mathbf{z})\int_{\mathbb{R}^N}h_{c_1(t-s)}(\mathbf{x},\mathbf{z}')h_{c_1s}(\mathbf{z}',\mathbf{y})\,dw(\mathbf{z}')\\&=C\mathcal{G}_{c_1 s}(\mathbf{y},\mathbf{z}) h_{c_1t}(\mathbf{x},\mathbf{y}),
\end{align*}
where is the last step we have used the semigroup property of $h_t(\cdot,\cdot)$.
\end{proof}

The following corollary is an consequence of Proposition~\ref{propo:glue_together}. 

\begin{corollary}\label{coro:pert_est}
Assume that $V:\mathbb{R}^N \longmapsto [0,\infty)$ is continuous, bounded, and Green bounded. Then there are constants $C_2,c_2>0$ such that for all $\mathbf{x},\mathbf{y}\in \mathbb{R}^N$ we have
\begin{align*}
    \int_0^t\int_{\mathbb{R}^N} h_{t-s}(\mathbf{x},\mathbf{z})V(\mathbf{z})k^{\{V\}}_s(\mathbf{z},\mathbf{y})\, dw(\mathbf{z})\, ds \leq C_2\|\mathcal G(V)\|_{L^\infty}h_{{c_2}t}(\mathbf{x},\mathbf{y}).
\end{align*}
\end{corollary}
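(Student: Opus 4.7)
The plan is to reduce the estimate to the pure heat-kernel version and then invoke Proposition~\ref{propo:glue_together}. First, by the pointwise bound $k_s^{\{V\}}(\mathbf{z},\mathbf{y})\leq h_s(\mathbf{z},\mathbf{y})$ from~\eqref{eq:k_very_basic}, it suffices to control
\begin{equation*}
    I(\mathbf{x},\mathbf{y},t):=\int_0^t\!\int_{\mathbb{R}^N} h_{t-s}(\mathbf{x},\mathbf{z})\,V(\mathbf{z})\,h_s(\mathbf{z},\mathbf{y})\,dw(\mathbf{z})\,ds.
\end{equation*}
I would split the $s$-integral at $t/2$: on $(0,t/2]$ the hypothesis $s\leq t/2$ of Proposition~\ref{propo:glue_together} holds directly, giving
$h_{t-s}(\mathbf{x},\mathbf{z})h_s(\mathbf{z},\mathbf{y})\leq C_1 h_{c_1 t}(\mathbf{x},\mathbf{y})\mathcal G_{c_1 s}(\mathbf{z},\mathbf{y})$; on $[t/2,t)$ I would apply the symmetry $h_r(\mathbf{u},\mathbf{v})=h_r(\mathbf{v},\mathbf{u})$ from~\eqref{eq:positive} to rewrite $h_{t-s}(\mathbf{x},\mathbf{z})h_s(\mathbf{z},\mathbf{y})=h_s(\mathbf{y},\mathbf{z})h_{t-s}(\mathbf{z},\mathbf{x})$ and invoke the proposition with the roles of $\mathbf{x}$ and $\mathbf{y}$ swapped, where now $t-s\leq t/2$ plays the role of the small time. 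This yields $h_{t-s}(\mathbf{x},\mathbf{z})h_s(\mathbf{z},\mathbf{y})\leq C_1 h_{c_1 t}(\mathbf{x},\mathbf{y})\mathcal G_{c_1 (t-s)}(\mathbf{z},\mathbf{x})$.

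Factoring out $h_{c_1 t}(\mathbf{x},\mathbf{y})$, the problem reduces to bounding
\begin{equation*}
\int_0^{t/2}\!\!\int_{\mathbb{R}^N} V(\mathbf{z})\mathcal G_{c_1 s}(\mathbf{z},\mathbf{y})\,dw(\mathbf{z})\,ds \ \text{ and }\
\int_{t/2}^{t}\!\!\int_{\mathbb{R}^N} V(\mathbf{z})\mathcal G_{c_1(t-s)}(\mathbf{z},\mathbf{x})\,dw(\mathbf{z})\,ds.
\end{equation*}
In each case a change of variable ($u=c_1s$, respectively $u=c_1(t-s)$) and the symmetry $d(\mathbf{z},\mathbf{y})=d(\mathbf{y},\mathbf{z})$ together with the defining formulas~\eqref{eq:G_2_variables} and~\eqref{eq:mathcal_G} transform the integrals into $c_1^{-1}\mathcal G(V)(\mathbf{y})$ and $c_1^{-1}\mathcal G(V)(\mathbf{x})$ respectively (after majorizing the truncated $u$-interval by $(0,\infty)$). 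Both are bounded by $c_1^{-1}\|\mathcal G(V)\|_{L^\infty}$, which is finite by the Green-boundedness of $V$ together with Proposition~\ref{prop:Greens}. Summing the two contributions gives the claim with $c_2=c_1$ and $C_2=2C_1/c_1$.

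There is really no hard step: Proposition~\ref{propo:glue_together} does the heavy lifting, and the only bookkeeping point is that the proposition produces a Gaussian factor centered at the spatial variable corresponding to the \emph{smaller} of the two time parameters, which forces the split at $t/2$ and the symmetry-based reformulation on the upper half. Once that is set up, the identification of the remaining double integrals with $\mathcal G(V)$ is immediate from the definitions.
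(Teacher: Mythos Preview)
Your argument is correct and follows the same approach as the paper: dominate $k^{\{V\}}_s$ by $h_s$, split the $s$-integral at $t/2$, and apply Proposition~\ref{propo:glue_together} on each half (using symmetry of $h_t$ to swap roles on $[t/2,t)$), then identify the remaining factor with $\mathcal G(V)$ via a linear change of the time variable. The paper phrases the treatment of the second half as ``reduced to the first by change of variables'' but the content is exactly what you wrote.
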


\begin{proof}
By~\eqref{eq:k_very_basic} we have
\begin{align*}
   \int_0^t\int_{\mathbb{R}^N} h_{t-s}(\mathbf{x},\mathbf{z})V(\mathbf{z})k^{\{V\}}_s(\mathbf{z},\mathbf{y})\, dw(\mathbf{z})\, ds &\leq \int_0^t\int_{\mathbb{R}^N} h_{t-s}(\mathbf{x},\mathbf{z})V(\mathbf{z})h_s(\mathbf{z},\mathbf{y})\, dw(\mathbf{z})\, ds  \\&=\int_0^{t/2}\int_{\mathbb{R}^N}\cdots+\int_{t/2}^{t}\int_{\mathbb{R}^N}\cdots=:I_1+I_2.
\end{align*}
We will estimate  $I_1$; the case of $I_2$ can be reduced to the case of $I_1$ by the change of variables. By Proposition~\ref{propo:glue_together} we get
\begin{align*}
    I_1 \leq C_1 h_{c_1t}(\mathbf{x},\mathbf{y})\int_0^t\int_{\mathbb{R}^N}\mathcal{G}_{c_1 s}(\mathbf{z},\mathbf{y})V(\mathbf{z})\, dw(\mathbf{z})\, ds.
\end{align*}
Finally, by the change of variables $s_1:=c_1 s$ we get
\begin{align*}
    \int_0^t\int_{\mathbb{R}^N}\mathcal{G}_{c_1s}(\mathbf{z},\mathbf{y})V(\mathbf{z})\, dw(\mathbf{z})\, ds \leq C\|\mathcal{G}(V)\|_{L^{\infty}},
\end{align*}
which finishes the proof.
\end{proof}

\begin{corollary}\label{eq:lower_small}
Assume that $V:\mathbb{R}^N \longmapsto [0,\infty)$ is bounded, continuous, and Green bounded. Let $c_3>0$. There are $\widetilde{c}_3,\varepsilon>0$ such that if $\|\mathcal G(V)\|_{L^\infty}<\varepsilon$, then for all $\mathbf{x},\mathbf{y} \in \mathbb{R}^N$ and $t>0$ such that $d(\mathbf{x},\mathbf{y})<c_3\sqrt{t}$ one has
\begin{equation}\label{eq:small_t_fin}
    k^{\{V\}}_t(\mathbf{x},\mathbf{y}) \geq \widetilde{c}_3 h_t(\mathbf{x},\mathbf{y}).
\end{equation}
\end{corollary}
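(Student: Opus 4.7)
The plan is to combine the perturbation identity \eqref{eq:perturbation} with the estimate of Corollary~\ref{coro:pert_est} and then compare $h_{c_2t}(\mathbf{x},\mathbf{y})$ with $h_t(\mathbf{x},\mathbf{y})$ in the region where $d(\mathbf{x},\mathbf{y})<c_3\sqrt{t}$. More precisely, rewriting \eqref{eq:perturbation} as
\begin{equation*}
    k^{\{V\}}_t(\mathbf{x},\mathbf{y}) = h_t(\mathbf{x},\mathbf{y}) - \int_0^t\int_{\mathbb{R}^N} h_s(\mathbf{x},\mathbf{z})V(\mathbf{z})k^{\{V\}}_{t-s}(\mathbf{z},\mathbf{y})\,dw(\mathbf{z})\,ds
\end{equation*}
and performing the change of variables $s\mapsto t-s$ in the double integral, Corollary~\ref{coro:pert_est} yields
\begin{equation*}
    k^{\{V\}}_t(\mathbf{x},\mathbf{y}) \geq h_t(\mathbf{x},\mathbf{y}) - C_2\|\mathcal G(V)\|_{L^\infty}\, h_{c_2 t}(\mathbf{x},\mathbf{y}).
\end{equation*}

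The key auxiliary step is to show that there exists a constant $K=K(c_2,c_3)>0$ such that
\begin{equation*}
    h_{c_2 t}(\mathbf{x},\mathbf{y}) \leq K\, h_t(\mathbf{x},\mathbf{y}) \qquad \text{whenever } d(\mathbf{x},\mathbf{y})<c_3\sqrt{t}.
\end{equation*}
For this I combine the upper bound \eqref{eq:main_claim} for $h_{c_2 t}$ with the lower bound \eqref{eq:main_lower} for $h_t$. The ratio of ball volumes $w(B(\mathbf{x},\sqrt{t}))/w(B(\mathbf{x},\sqrt{c_2t}))$ is bounded below by the doubling property \eqref{eq:doubling}; the ratio $\Lambda(\mathbf{x},\mathbf{y},c_2t)/\Lambda(\mathbf{x},\mathbf{y},t)$ is bounded above by $c_2^{2|G|}$ via \eqref{eq:Lambda_2t}; and the exponential factor $\exp(c_l d(\mathbf{x},\mathbf{y})^2/t-c_u d(\mathbf{x},\mathbf{y})^2/(c_2t))$ is controlled by $\exp(c_l c_3^2)$ under the standing assumption $d(\mathbf{x},\mathbf{y})<c_3\sqrt{t}$.

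Substituting the comparison into the inequality above gives
\begin{equation*}
    k^{\{V\}}_t(\mathbf{x},\mathbf{y}) \geq \bigl(1-C_2 K\|\mathcal G(V)\|_{L^\infty}\bigr)\, h_t(\mathbf{x},\mathbf{y}).
\end{equation*}
It then suffices to choose $\varepsilon:=1/(2C_2 K)$ and $\widetilde{c}_3:=1/2$ to conclude \eqref{eq:small_t_fin} whenever $\|\mathcal G(V)\|_{L^\infty}<\varepsilon$.

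The main obstacle is the comparison $h_{c_2 t}\lesssim h_t$ in the regime $d(\mathbf{x},\mathbf{y})<c_3\sqrt{t}$: although intuitively clear because in this diagonal regime both kernels behave like $w(B(\mathbf{x},\sqrt{t}))^{-1}$ times bounded factors, one must keep careful track of the auxiliary function $\Lambda$ (for which $c_2 t$ is not necessarily larger than $t$ depending on the proof of Corollary~\ref{coro:pert_est}, so the two-sided estimate in \eqref{eq:Lambda_2t} is needed) and confirm that the exponential mismatch between $c_u/c_2$ and $c_l$ is harmless once $d(\mathbf{x},\mathbf{y})/\sqrt{t}$ is bounded by $c_3$.
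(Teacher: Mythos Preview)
Your argument is correct and follows essentially the same route as the paper's proof: both start from the perturbation identity \eqref{eq:perturbation}, invoke Corollary~\ref{coro:pert_est}, and then use the two-sided bounds of Theorem~\ref{teo:1} together with \eqref{eq:Lambda_2t}, the doubling property, and the constraint $d(\mathbf{x},\mathbf{y})<c_3\sqrt t$ to absorb the error term. The only organizational difference is that the paper expands $h_t$ and $h_{c_2t}$ separately into the form $w(B(\mathbf{x},\sqrt t))^{-1}\Lambda(\mathbf{x},\mathbf{y},t)$ (dropping the exponentials, which are harmless in this regime) and then reassembles via \eqref{eq:main_claim}, whereas you package the same computation as a single comparison $h_{c_2t}\le K\,h_t$; the ingredients and the dependence of $\varepsilon,\widetilde c_3$ on $c_2,c_3$ are identical.
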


\begin{proof} From the perturbation formula~\eqref{eq:perturbation} we get
\begin{align*}
    h_t(\mathbf{x},\mathbf{y})-k^{\{V\}}_{t}(\mathbf{x},\mathbf{y})=\int_0^{t}\int_{\mathbb{R}^N}h_{t-s}(\mathbf{x},\mathbf{z})V(\mathbf{z})k^{\{V\}}_{s}(\mathbf{z},\mathbf{y})\,dw(\mathbf{z})\,ds.
\end{align*}
Hence from  Corollary~\ref{coro:pert_est},~\eqref{eq:main_claim}, and~\eqref{eq:main_lower}, we deduce  that 
\begin{equation}\label{eq:pert_app}
    \begin{split}
        k^{\{V\}}_t(\mathbf{x},\mathbf{y})&\geq h_t(\mathbf{x},\mathbf{y})-C_2\| \mathcal G(V)\|_{L^\infty} h_{c_2t}(\mathbf{x},\mathbf{y})\\
       & \geq C_{\rm l} w(B(\mathbf{x},\sqrt{t}))^{-1} e^{-c_{\rm l}\frac{d(\mathbf{x},\mathbf{y})^2}{t}}\Lambda(\mathbf{x},\mathbf{y},t) \\
       &\ \ - C_{\rm u} C_2 \| \mathcal G(V)\|_{L^\infty} w(B(\mathbf{x},\sqrt{c_2t}))^{-1}\Lambda(\mathbf{x},\mathbf{y},c_2t). 
    \end{split}
\end{equation}
Note that by the fact $d(\mathbf{x},\mathbf{y})\leq c_3\sqrt{t}$, we have $e^{-c_{\rm l}\frac{d(\mathbf{x},\mathbf{y})^2}{t}} \geq C>0$. Further, by the doubling property of $dw$ and~\eqref{eq:Lambda_2t}, if $\varepsilon>0$ is small enough,~\eqref{eq:pert_app} implies
\begin{align*}
     k^{\{V\}}_t(\mathbf{x},\mathbf{y}) \geq cw(B(\mathbf{x},\sqrt{t}))^{-1}\Lambda(\mathbf{x},\mathbf{y},t)
\end{align*}
for some constant $c>0$. Finally,~\eqref{eq:small_t_fin} is a consequence of~\eqref{eq:main_claim}.
\end{proof}

\begin{proposition}\label{prop:lower_small}
Assume that $V:\mathbb{R}^N \longmapsto [0,\infty)$ is continuous, bounded, and Green bounded. There are $\varepsilon,c_4,C_4>0$ such that if $\| \mathcal G(V)\|_{L^\infty}<\varepsilon$, then for all $\mathbf{x},\mathbf{y} \in \mathbb{R}^N$ and $t>0$ one has
\begin{equation}
    k^{\{V\}}_t(\mathbf{x},\mathbf{y})\geq C_4 h_{c_4 t}(\mathbf{x},\mathbf{y}).
\end{equation}
\end{proposition}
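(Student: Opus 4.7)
Corollary~\ref{eq:lower_small} already supplies the desired bound $k^{\{V\}}_t(\mathbf{x},\mathbf{y}) \geq \widetilde{c}_3 h_t(\mathbf{x},\mathbf{y})$ (hence $\geq C h_{c_4 t}(\mathbf{x},\mathbf{y})$ for any $c_4\geq 1$, via~\eqref{eq:Lambda_2t} and the doubling property of $w$) whenever $d(\mathbf{x},\mathbf{y}) \leq c_3 \sqrt{t}$. The plan is to extend this to the long-range regime $d(\mathbf{x},\mathbf{y}) > c_3\sqrt{t}$ by a chaining argument that iterates the Chapman--Kolmogorov identity for the semigroup $\{e^{-tL}\}_{t\geq 0}$ and reduces each step to the short-range case handled by Corollary~\ref{eq:lower_small}.

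Fix $\mathbf{x},\mathbf{y},t$ with $R:=d(\mathbf{x},\mathbf{y})/\sqrt{t}$ large, and set $n\asymp R^{2}$ so that the natural step length $d(\mathbf{x},\mathbf{y})/n$ is comparable to $\sqrt{t/n}$. Using Lemma~\ref{lem:realization_of_d}, choose $\sigma\in G$ such that $\sigma(\mathbf{y})$ lies in the closed Weyl chamber of $\mathbf{x}$ and $d(\mathbf{x},\mathbf{y})=\|\mathbf{x}-\sigma(\mathbf{y})\|$, and place equally spaced auxiliary points $\mathbf{p}_{0}=\mathbf{x},\mathbf{p}_{1},\dots,\mathbf{p}_{n}=\sigma(\mathbf{y})$ along the segment $[\mathbf{x},\sigma(\mathbf{y})]$. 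Iterate
$$
k^{\{V\}}_{t}(\mathbf{x},\mathbf{y})=\int_{(\mathbb{R}^{N})^{n-1}}\prod_{i=0}^{n-1}k^{\{V\}}_{t/n}(\mathbf{z}_{i},\mathbf{z}_{i+1})\,dw(\mathbf{z}_{1})\cdots dw(\mathbf{z}_{n-1}),\qquad \mathbf{z}_{0}=\mathbf{x},\;\mathbf{z}_{n}=\mathbf{y},
$$
and restrict the integration to the tube $\mathbf{z}_{i}\in B(\mathbf{p}_{i},\kappa\sqrt{t/n})$ for $1\leq i\leq n-1$, with $\kappa>0$ a small absolute constant. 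For the interior pairs one has $\|\mathbf{z}_{i}-\mathbf{z}_{i+1}\|\lesssim\sqrt{t/n}$, whence $d(\mathbf{z}_{i},\mathbf{z}_{i+1})\leq c_{3}\sqrt{t/n}$; for the final pair the $G$-invariance of the orbit distance in each variable gives $d(\mathbf{z}_{n-1},\mathbf{y})=d(\mathbf{z}_{n-1},\sigma(\mathbf{y}))\leq\|\mathbf{z}_{n-1}-\sigma(\mathbf{y})\|\leq c_{3}\sqrt{t/n}$ as well. Corollary~\ref{eq:lower_small} then yields $k^{\{V\}}_{t/n}(\mathbf{z}_{i},\mathbf{z}_{i+1})\geq\widetilde{c}_{3}\,h_{t/n}(\mathbf{z}_{i},\mathbf{z}_{i+1})$ for every $i$.

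To conclude, apply the Gaussian lower bound~\eqref{eq:main_lower} (using $\Lambda\geq 1$) to each $h_{t/n}(\mathbf{z}_{i},\mathbf{z}_{i+1})$. Since consecutive tube points are separated by distance $\lesssim\sqrt{t/n}$, the Gaussian factor is bounded below by a constant and the measure factor contributes $w(B(\mathbf{p}_{i},\sqrt{t/n}))^{-1}$, which balances exactly with the integration over $B(\mathbf{p}_{i},\kappa\sqrt{t/n})$ via doubling. Telescoping $\sum_{i=0}^{n-1}\|\mathbf{p}_{i+1}-\mathbf{p}_{i}\|^{2}/(t/n)=d(\mathbf{x},\mathbf{y})^{2}/t$, the restricted integral is at least $c^{n}w(B(\mathbf{x},\sqrt{t}))^{-1}e^{-c' d(\mathbf{x},\mathbf{y})^{2}/t}$. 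Combining with $\widetilde{c}_{3}^{\,n}=e^{-cn}$ and the choice $n\asymp R^{2}$ yields
$$
k^{\{V\}}_{t}(\mathbf{x},\mathbf{y})\geq C\,w(B(\mathbf{x},\sqrt{t}))^{-1}e^{-c'' d(\mathbf{x},\mathbf{y})^{2}/t},
$$
which, via the upper bound of Theorem~\ref{teo:1} together with the uniform bound $\Lambda(\mathbf{x},\mathbf{y},c_{4}t)\leq |R|^{2|G|}$, implies $k^{\{V\}}_{t}(\mathbf{x},\mathbf{y})\geq C_{4}h_{c_{4}t}(\mathbf{x},\mathbf{y})$ for $c_{4}$ sufficiently large. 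The main obstacle is balancing constants: the contraction $\widetilde{c}_{3}^{\,n}$ must be absorbed into the Gaussian aggregate $e^{-c' d(\mathbf{x},\mathbf{y})^{2}/t}$, which pins down the proportionality constant in $n\asymp R^{2}$ and in turn the size of $c_{4}$ via the ratio of the Gaussian exponent produced by the chaining versus the one in the heat kernel lower bound of Theorem~\ref{teo:1}.
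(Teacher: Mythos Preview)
Your chaining strategy matches the paper's, but there is a genuine gap at the terminal link of the chain. You claim that after applying Corollary~\ref{eq:lower_small} to obtain $k^{\{V\}}_{t/n}(\mathbf z_i,\mathbf z_{i+1})\ge \widetilde c_3\,h_{t/n}(\mathbf z_i,\mathbf z_{i+1})$, one may invoke~\eqref{eq:main_lower} ``using $\Lambda\ge 1$'' for every $i$. This is false for the last pair $(\mathbf z_{n-1},\mathbf y)$: the inequality $\Lambda(\mathbf a,\mathbf b,s)\ge 1$ holds precisely when the empty sequence lies in $\mathcal A(\mathbf a,\mathbf b)$, i.e.\ when $\mathbf a$ and $\mathbf b$ share a Weyl chamber. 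Your point $\mathbf z_{n-1}$ lies in a small ball around $\mathbf p_{n-1}$, which sits in the chamber of $\mathbf x$; when $\sigma\ne I$, this is typically \emph{not} the chamber of $\mathbf y$. In that case $\Lambda(\mathbf z_{n-1},\mathbf y,t/n)$ can be arbitrarily small (of order $(t/n)^{m}$ for some $m\ge 1$), and the bound $h_{t/n}(\mathbf z_{n-1},\mathbf y)\gtrsim w(B(\mathbf z_{n-1},\sqrt{t/n}))^{-1}$ simply fails. Indeed, your resulting intermediate estimate $k^{\{V\}}_t(\mathbf x,\mathbf y)\ge C\,w(B(\mathbf x,\sqrt t))^{-1}e^{-c''d(\mathbf x,\mathbf y)^2/t}$ is false already for $V\equiv 0$: take $N=1$, $\mathbf x=1$, $\mathbf y=-1$ (so $d(\mathbf x,\mathbf y)=0$) and let $t\to 0$; by~\eqref{eq:main_claim} one has $h_t(1,-1)\,w(B(1,\sqrt t))\lesssim \Lambda(1,-1,t)\to 0$.

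The paper repairs exactly this step: instead of dropping the $\Lambda$ factor at the last link, it uses Lemma~\ref{lem:small_deformation} to replace $h_{1/n}(\mathbf z_{n-1},\mathbf y)$ by $h_{1/(2n)}(\mathbf y,\mathbf y')$ with $\mathbf y'=\sigma(\mathbf y)$, thereby \emph{retaining} the factor $\Lambda(\mathbf y,\mathbf y',\cdot)$. This yields $k^{\{V\}}_1(\mathbf x,\mathbf y)\ge c\,e^{-c_7 d(\mathbf x,\mathbf y)^2}h_1(\mathbf y,\mathbf y')$, after which Proposition~\ref{propo:glue_together} is applied in reverse to convert $h_1(\mathbf y,\mathbf y')$ into a product $h_{1/c_1}(\mathbf x,\mathbf y)\,h_{1/c_1}(\mathbf x,\mathbf y')$; the second factor \emph{does} satisfy $\Lambda(\mathbf x,\mathbf y',\cdot)\ge 1$ (since $\mathbf x$ and $\mathbf y'$ share a chamber by construction), and the first factor carries the correct $\Lambda(\mathbf x,\mathbf y,\cdot)$ needed to match $h_{c_4 t}(\mathbf x,\mathbf y)$. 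Your argument can be salvaged along these lines, but as written the last step does not go through.
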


\begin{proof}
Let  $c_3=1$, $\tilde c_3$ and $\varepsilon>0$  be  as in Corollary~\ref{eq:lower_small}. Without loss of generality we assume that  $t=1$. Further, according to Corollary~\ref{eq:lower_small}, it suffices to consider $d(\mathbf x,\mathbf y)\geq 1$. Let $\sigma\in G$ be such that 
$ \|\sigma(\mathbf y)-\mathbf x\|=d(\mathbf y,\mathbf x)$. Set $\mathbf{y}'=\sigma(\mathbf{y})$, $n=64[d(\mathbf{x},\mathbf{y})^2]=64[\| \mathbf{x}-\mathbf{y}'\|^2]$, and 
\begin{align*}
    \mathbf{y}_j=\mathbf{x}+j\frac{\mathbf{y}'-\mathbf{x}}{n}, \quad j=1,2,\ldots,n-1.
\end{align*}
Consider the balls $B_j= B\left(\mathbf{y}_j, (8\sqrt{n})^{-1}\right)$.
By the semigroup property of $k^{\{V\}}_{1}(\mathbf{x},\mathbf{y})$ and the fact that $k^{\{V\}}_{t_1}(\mathbf{x}_1,\mathbf{x}_2) \geq 0$ for all $\mathbf{x}_1,\mathbf{x}_2 \in \mathbb{R}^N$  and $t_1>0$ we have
\begin{equation}\label{eq:iter}
    \begin{split}
     &k^{\{V\}}_1(\mathbf{x},\mathbf{y})\\&=\int_{\mathbb{R}^N}\int_{\mathbb{R}^N} \ldots \int_{\mathbb{R}^N}\int_{\mathbb{R}^N} k^{\{V\}}_{\frac{1}{n} }(\mathbf{x},\mathbf{z}_1)   k^{\{V\}}_{\frac{1}{n} }(\mathbf{z}_1,\mathbf{z}_2) \ldots k^{\{V\}}_{\frac{1}{n} }(\mathbf{z}_{n-2},\mathbf{z}_{n-1})  k^{\{V\}}_{\frac{1}{n} }(\mathbf{z}_{n-1},\mathbf{y})\, dw(\mathbf{z}_1)\ldots\, dw(\mathbf{z}_{n-1})\\
     &\geq \int_{B_1}\int_{B_2}\ldots\int_{B_{n-2}}\int_{B_{n-1}}k^{\{V\}}_{\frac{1}{n} }(\mathbf{x},\mathbf{z}_1)   k^{\{V\}}_{\frac{1}{n} }(\mathbf{z}_1,\mathbf{z}_2) \ldots k^{\{V\}}_{\frac{1}{n} }(\mathbf{z}_{n-2},\mathbf{z}_{n-1})  k^{\{V\}}_{\frac{1}{n} }(\mathbf{z}_{n-1},\mathbf{y})\, dw(\mathbf{z}_{n-1})\ldots\, dw(\mathbf{z}_{1}). 
    \end{split}
\end{equation}
Observe that for $\mathbf{z}_j \in B_{j}$ and $\mathbf{z}_{j+1} \in B_{j+1}$ we have
\begin{align*}
    \| \mathbf{z}_j-\mathbf{z}_{j+1}\|\leq \frac{4}{8\sqrt{n}}.    
\end{align*}
By Corollary~\ref{eq:lower_small}, Lemma~\ref{lem:small_deformation},~\eqref{eq:main_lower}, and the doubling property of the measure $dw$, we get
\begin{equation}\label{eq:mult_iter}
    k^{\{V\}}_{\frac{1}{n}}(\mathbf{z}_j,\mathbf{z}_{j+1})\geq c_5 w\left(B\left(\mathbf{y}_j,(\sqrt{n})^{-1}\right)\right)^{-1}.
\end{equation}
Moreover, by the fact that $d(\mathbf{z}_{n-1},\mathbf{y})=d(\mathbf{z}_{n-1},\mathbf{y}') \leq \|\mathbf{z}_{n-1}-\mathbf{y}'\| \leq \frac{4}{8\sqrt{n}}$, Corollary~\ref{eq:lower_small}, and Lemma~\ref{lem:small_deformation} (with $c_0=2$), we obtain 
\begin{equation}\label{eq:single_iter}
    k^{\{V\}}_{\frac{1}{n} } (\mathbf{z}_{n-1}, \mathbf{y}) \geq \widetilde{c}_3 h_{\frac{1}{n}}(\mathbf{z}_{n-1}, \mathbf{y})\geq \widetilde{c}_3 { C_0^{-1}} h_{{ 1/(2n)}}(\mathbf{y},\mathbf{y}').
\end{equation}
Recall that by the doubling property of $dw$ and the definition of $B_j$ we have
\begin{align*}
    \frac{w(B_{j})}{w\left(B\left(\mathbf{y}_j,(\sqrt{n})^{-1}\right)\right)} \geq c.
\end{align*}
Therefore, by~\eqref{eq:iter},~\eqref{eq:mult_iter}, and~\eqref{eq:single_iter}, for a constant $c_6>0$ small enough,
\begin{equation}\label{eq:iter_collected}
    \begin{split}
     k^{\{V\}}_1(\mathbf{x},\mathbf{y}) 
    \geq c_6^{n-1}h_{ 1/(2n)}(\mathbf{y},\mathbf{y}'). 
    \end{split}
\end{equation}
Then, by~\eqref{eq:main_lower}, doubling property of $dw$,~\eqref{eq:Lambda_2t},~\eqref{eq:balls_asymp}, and the fact that  $d(\mathbf{y},\mathbf{y}')=0$, one gets 
\begin{equation}\label{eq:iter_scaled}
    h_{{1/(2n)}}(\mathbf{y},\mathbf{y}') \geq C_{l}w(B(\mathbf{y},{(\sqrt{2n}})^{-1})^{-1}\Lambda(\mathbf{y},\mathbf{y}', (1/(2n))) \geq Cn^{-\mathbf{N}/2}n^{-{2}|G|}w(B(\mathbf{y},1))^{-1}\Lambda(\mathbf{y},\mathbf{y}',1).
\end{equation}
Recall that $n=64[d(\mathbf{x},\mathbf{y})^2]$.  
Hence, by~\eqref{eq:iter_collected},~\eqref{eq:iter_scaled}, and~\eqref{eq:main_claim}, we obtain 
\begin{align*}
    k^{\{V\}}_1(\mathbf{x},\mathbf{y}) \geq C c_6^{n-1}n^{-\mathbf{N}/2}n^{-{2}|G|}w(B(\mathbf{y},1))^{-1}\Lambda(\mathbf{y},\mathbf{y}',1) \geq {c} e^{-c_7d(\mathbf{x},\mathbf{y})^2}h_{1}(\mathbf{y},\mathbf{y}').
\end{align*}
Further, by Proposition~\ref{propo:glue_together}, we get
\begin{align*}
    e^{-c_7d(\mathbf{x},\mathbf{y})^2}h_{1}(\mathbf{y},\mathbf{y}') \geq C e^{-c_7d(\mathbf{x},\mathbf{y})^2} w(B(\mathbf{x},1)) h_{1/c_1}(\mathbf{x},\mathbf{y})h_{1/c_1}(\mathbf{x},\mathbf{y}').
\end{align*}
Since $\|\mathbf{x}-\mathbf{y}'\|=d(\mathbf{x},\mathbf{y})$, by Lemma~\ref{lem:realization_of_d} and the definition of $\Lambda(\cdot,\cdot,\cdot)$ (see~\eqref{eq:Lambda_def}), we have {$\emptyset \in \mathcal{A}(\mathbf{x},\mathbf{y}')$, so $\Lambda (\mathbf x,\mathbf y', 1/c_1) \geq 1$}. Hence, by \eqref{eq:main_lower}, one obtains 
\begin{align*}
    h_{1/c_1}(\mathbf{x},\mathbf{y}') \geq C w(B(\mathbf{x},1))^{-1}e^{-c_{8}d(\mathbf{x},\mathbf{y})^2}. 
\end{align*}
Thus, using Theorem~\ref{teo:1}, we conclude  that 
\begin{align*}
    k^{\{V\}}_1(\mathbf x,\mathbf y)\geq Ce^{-(c_7+c_8)d(\mathbf x,\mathbf y)^2} h_{1/c_1}(\mathbf x,\mathbf y)\geq C w(B(\mathbf x,\sqrt{1/{c_1}}))^{-1} e^{-c_9d(\mathbf x,\mathbf y)^2}\Lambda (\mathbf x,\mathbf y,1/{c_1}).
\end{align*}
Finally the claim follows by applying \eqref{eq:Lambda_2t} together with Theorem~\ref{teo:1}.
\end{proof}

Let us note that implication~\eqref{item:potential_Green_bounded}$\implies$~\eqref{item:h_ct_leq_k_t} is already proved under the assumption that $\|\mathcal{G}(V)\|_{L^{\infty}}$ is small enough and $V$ is continuous and bounded. In Proposition~\ref{propo:V_bounded}, we will make use of the  Feynman--Kac formula to relax the assumption $\|\mathcal{G}(V)\|_{L^{\infty}}<\varepsilon$ for continuous and bounded functions $V$. Finally, in the further part of this subsection, we will relax the assumption that $V$ is continuous and bounded. 

\begin{proposition}\label{propo:V_bounded}
Assume that $V:\mathbb{R}^N \longmapsto [0,\infty)$ is continuous,  bounded, and Green bounded. Then there are constants $C_{\mathcal{G}},c_{\mathcal{G}}>0$, $C_{\mathcal{G}}<1$, which depend only on the bound of  $\|\mathcal G(V)\|_{L^\infty}$ such that for all $\mathbf{x},\mathbf{y} \in \mathbb{R}^N$ and $t>0$ we have
\begin{equation}
    k^{\{V\}}_t(\mathbf x,\mathbf y)\geq C_{\mathcal{G}} h_{c_{\mathcal{G}}t}(\mathbf x, \mathbf y).
\end{equation}
\end{proposition}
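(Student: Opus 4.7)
The plan is to reduce the arbitrary Green bound case to the small-Green-norm case of Proposition~\ref{prop:lower_small} via a Jensen-type inequality extracted from the Trotter product formula of Theorem~\ref{teo:continuous_k}. Set $M:=\|\mathcal{G}(V)\|_{L^\infty}$ and let $\varepsilon>0$ be as in Proposition~\ref{prop:lower_small}. Pick an integer $K\geq 1$ with $K>M/\varepsilon$. The scaled potential $V/K$ is continuous, bounded, and, by linearity of $\mathcal{G}$, satisfies $\|\mathcal{G}(V/K)\|_{L^\infty}=M/K<\varepsilon$, so Proposition~\ref{prop:lower_small} gives
$$k^{\{V/K\}}_t(\mathbf{x},\mathbf{y})\geq C_4\, h_{c_4 t}(\mathbf{x},\mathbf{y})$$
for all $\mathbf{x},\mathbf{y}\in\mathbb{R}^N$ and $t>0$.

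The heart of the proof is the Jensen-type bound
$$k^{\{V\}}_t(\mathbf{x},\mathbf{y})\geq h_t(\mathbf{x},\mathbf{y})^{1-K}\, k^{\{V/K\}}_t(\mathbf{x},\mathbf{y})^K.$$
To establish it we use the Trotter kernels $Q_{n,t}^{\{\cdot\}}$ from Theorem~\ref{teo:continuous_k}. By the semigroup property of $h_t$, the measure
$$d\mu_n(\mathbf{z}_1,\ldots,\mathbf{z}_{n-1}):=h_t(\mathbf{x},\mathbf{y})^{-1}\prod_{j=0}^{n-1}h_{t/n}(\mathbf{z}_j,\mathbf{z}_{j+1})\,dw(\mathbf{z}_1)\cdots dw(\mathbf{z}_{n-1})\quad(\mathbf{z}_0:=\mathbf{x},\ \mathbf{z}_n:=\mathbf{y})$$
is a probability measure on $(\mathbb{R}^N)^{n-1}$, and with $S(\mathbf{z}):=\sum_{j=1}^n V(\mathbf{z}_j)$ one has
$$Q_{n,t}^{\{V\}}(\mathbf{x},\mathbf{y})=h_t(\mathbf{x},\mathbf{y})\int e^{-tS/n}\,d\mu_n,\qquad Q_{n,t}^{\{V/K\}}(\mathbf{x},\mathbf{y})=h_t(\mathbf{x},\mathbf{y})\int e^{-tS/(Kn)}\,d\mu_n.$$
Jensen's inequality for the convex map $y\mapsto y^K$ on $[0,\infty)$ gives
$$\int e^{-tS/n}\,d\mu_n=\int\bigl(e^{-tS/(Kn)}\bigr)^K\,d\mu_n\geq\left(\int e^{-tS/(Kn)}\,d\mu_n\right)^K,$$
i.e.\ $Q_{n,t}^{\{V\}}\geq h_t^{1-K}\bigl(Q_{n,t}^{\{V/K\}}\bigr)^K$. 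Passing $n\to\infty$ along a common subsequence (extracted by Arzel\`a--Ascoli as in Theorem~\ref{teo:continuous_k}) yields the claim.

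Combining the two displays gives $k^{\{V\}}_t(\mathbf{x},\mathbf{y})\geq C_4^K\,h_t(\mathbf{x},\mathbf{y})^{1-K}\,h_{c_4 t}(\mathbf{x},\mathbf{y})^K$. To dominate the right-hand side from below by $C_{\mathcal{G}}\,h_{c_{\mathcal{G}} t}(\mathbf{x},\mathbf{y})$, we plug in the lower bound~\eqref{eq:main_lower} for $h_{c_4 t}$ and the upper bound~\eqref{eq:main_claim} for $h_t$, then use~\eqref{eq:doubling} to compare $w(B(\mathbf{x},\sqrt{c_4 t}))$ with $w(B(\mathbf{x},\sqrt{t}))$ and~\eqref{eq:Lambda_2t} to compare the $\Lambda$-factors. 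The exponents in $d(\mathbf{x},\mathbf{y})^2/t$ combine to $-(Kc_l/c_4-(K-1)c_u)\,d(\mathbf{x},\mathbf{y})^2/t$; since $c_l>1/4>c_u$, the coefficient $\beta:=Kc_l/c_4-(K-1)c_u$ is strictly positive. Setting $c_{\mathcal{G}}:=c_u/\beta$ matches the Gaussian with $\exp(-c_u d(\mathbf{x},\mathbf{y})^2/(c_{\mathcal{G}} t))$, and one final comparison with the upper bound~\eqref{eq:main_claim} at time $c_{\mathcal{G}} t$ (plus doubling and~\eqref{eq:Lambda_2t}) produces the desired estimate. All emerging constants depend only on $K=K(M)$, $c_4$, $C_4$ and the structural constants $c_l,c_u,|G|$ and the doubling constant, hence only on $M$. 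If the resulting $C_{\mathcal{G}}\geq 1$ we replace it by $1/2$, shrinking $c_{\mathcal{G}}$ slightly if needed, to satisfy $C_{\mathcal{G}}<1$.

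The main technical obstacle is the bookkeeping in the final comparison step: one must track the $K$-th power of the two-sided heat kernel bounds through the doubling and $\Lambda$-factor comparisons, and check that all constants arising from these losses depend only on $M$ and not on $\mathbf{x},\mathbf{y},t$. The positivity of $\beta$ is what makes the scheme close up at all, since otherwise the resulting Gaussian could not be absorbed into a heat kernel factor $h_{c_{\mathcal{G}} t}$.
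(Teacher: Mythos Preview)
Your Jensen-via-Trotter inequality $k^{\{V\}}_t \geq h_t^{1-K}\bigl(k^{\{V/K\}}_t\bigr)^K$ is correct and is exactly the rearrangement of the bound the paper obtains from H\"older's inequality applied to the Feynman--Kac representation (the paper gets $k^{\{V/p\}}_t\leq (k^{\{V\}}_t)^{1/p}h_t^{1/p'}$, which is the same inequality with $K=p$). Your derivation is a genuine simplification: by working with the discrete Trotter kernels and the probability measure $\mu_n$ on $(\mathbb R^N)^{n-1}$, you bypass the c\`adl\`ag Dunkl-process machinery of Section~\ref{sec:F-K}, which the paper develops solely to justify this one H\"older step.

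There is, however, a gap in the final comparison. The claim that $\beta=Kc_l/c_4-(K-1)c_u>0$ follows from $c_l>1/4>c_u$ is not justified: $c_4$ is a fixed structural constant produced by Proposition~\ref{prop:lower_small}, over which you have no control, and if $c_l/c_4<c_u$ then $\beta=c_u+K(c_l/c_4-c_u)$ becomes negative for large $K$ (and $K$ must grow with $\|\mathcal G(V)\|_{L^\infty}$). The fix is immediate and is precisely what the paper does: when bounding $h_t^{1-K}=h_t^{-(K-1)}$ from below, use the cruder upper estimate $h_t\leq C_u\,w(B(\mathbf x,\sqrt t))^{-1}\Lambda(\mathbf x,\mathbf y,t)$, i.e.\ simply drop the Gaussian factor $e^{-c_u d(\mathbf x,\mathbf y)^2/t}\leq 1$ from~\eqref{eq:main_claim}. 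The combined exponent then becomes $-Kc_l\, d(\mathbf x,\mathbf y)^2/(c_4 t)$, which is manifestly of the right sign, and the rest of your argument goes through unchanged with $c_{\mathcal G}=c_u c_4/(Kc_l)$. Alternatively, since Theorem~\ref{teo:1} allows any $c_l>1/4$, you may first fix $c_u$ and $c_4$ and then choose $c_l>c_4c_u$; but dropping the Gaussian is cleaner.
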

\begin{proof} 
Let $\varepsilon$ be the same as in Proposition \ref{prop:lower_small}. We may assume that $\|\mathcal G(V)\|_{L^\infty}\geq \varepsilon$. Let $1<p<\infty$ be such that $ \|\mathcal G(\frac{1}{p}V)\|_{L^\infty}=\varepsilon /2$. Recall that $k_t^{\{V\}}(\mathbf x,\mathbf y)$ is a continuous function (see Theorem \ref{teo:continuous_k}). By the Lebesgue differentiation theorem, for all $(\mathbf x_0,\mathbf y_0) \in \mathbb{R}^N \times \mathbb{R}^N$ and $t>0$, we have 
\begin{equation}
\begin{split}
k^{\{\frac{1}{p}V\}}_t(\mathbf x_0,\mathbf y_0)&=\lim_{r\to 0^+} \frac{1}{w(B(\mathbf y_0, r))} \int_{B(\mathbf y_0,r)} k^{\{\frac{1}{p}V\}}_t(\mathbf x_0,\mathbf y)\, dw(\mathbf y)\\
&=\lim_{r\to 0^+} \frac{1}{w(B(\mathbf y_0, r))} E^{\mathbf x_0}\Big[\exp\Big({-\int_0^t \frac{1}{p} V(X_s)\, ds}\Big)\chi_{B(\mathbf y_0, r) } (X_t)\Big],
\end{split}
\end{equation}
where in the last equality we have used the Feynman-Kac formula~\eqref{eq:Feynman-Kac}. Now, we apply the H\"older's inequality with the exponents $p+p'=pp'$, and then Theorem~\ref{teo:1} obtaining 
\begin{equation}\label{eq:tilde_k1}
\begin{split}
k^{\{\frac{1}{p}V\}}_t(\mathbf x_0,\mathbf y_0)&\leq \lim_{r\to 0^+} \frac{1}{w(B(\mathbf y_0, r))} \Big\{E^{\mathbf x_0}\Big(e^{-\int_0^t  V(X_s)\, ds}\chi_{B(\mathbf y_0, r) } (X_t)\Big)\Big\}^{1/p}  \Big\{E^{\mathbf x_0}\Big(\chi_{B(\mathbf y_0, r) } (X_t)\Big)\Big\}^{1/p'}\\
& =\Big\{k^{\{V\}}_t(\mathbf x_0,\mathbf y_0)\Big\}^{1/p} h_t(\mathbf x_0,\mathbf y_0)^{1/p'}\\
&\leq C \Big\{k^{\{V\}}_t(\mathbf x_0,\mathbf y_0)\Big\}^{1/p} \frac{\Lambda(\mathbf x_0,\mathbf y_0,t)^{1/p'}}{w(B(\mathbf x_0,\sqrt{t}))^{1/p'}}. 
\end{split}
\end{equation} By Proposition~\ref{prop:lower_small},~\eqref{eq:main_lower}, the doubling property \eqref{eq:doubling}, and~\eqref{eq:Lambda_2t},  we have 
\begin{equation}\label{eq:tilde_k2}
    \begin{split}
        k^{\{\frac{1}{p}V\}}_t(\mathbf x_0,\mathbf y_0)\geq C_4 h_{c_4 t}(\mathbf x_0,\mathbf y_0) & \geq c''w(B(\mathbf x_0,\sqrt{t}))^{-1} e^{-c'd(\mathbf x_0,\mathbf y_0)^2/4t}\Lambda (\mathbf x_0,\mathbf y_0,t).
    \end{split}
\end{equation}
Thus, combining \eqref{eq:tilde_k1} together with \eqref{eq:tilde_k2}, we get 
\begin{equation}
    \begin{split}
    c''w(B(\mathbf x_0,\sqrt{t}))^{-1} e^{-c'd(\mathbf x_0,\mathbf y_0)^2/4t} \Lambda(\mathbf x_0,\mathbf y_0,t)\leq    C \Big\{k^{\{V\}}_t(\mathbf x_0, \mathbf y_0)\Big\}^{1/p} \frac{\Lambda(\mathbf x_0,\mathbf y_0,t)^{1/p'}}{w(B(\mathbf x_0,\sqrt{t}))^{1/p'}}
    \end{split}.
\end{equation}
Finally, by Theorem~\ref{teo:1},
\begin{equation}
    k^{\{V\}}_t(\mathbf x_0,\mathbf y_0)\geq \Big(\frac{c''}{C}\Big)^p e^{-c'pd(\mathbf x_0,\mathbf y_0)^2/4t}  \frac{\Lambda(\mathbf x_0,\mathbf y_0,t)}{w(B(\mathbf x_0,\sqrt{t}))}\geq ch_{t/c'} (\mathbf x_0,\mathbf y_0).
\end{equation}
\end{proof} 

\begin{proposition}\label{propo:V_bounded_2}
Assume that $V \colon \mathbb{R}^N \longmapsto [0,\infty)$ is measurable, bounded, and Green bounded. Then there are constant $\widetilde{C}_{\mathcal{G}},\widetilde{c}_{\mathcal{G}}>0$, which depend only on the bound of $\|\mathcal G(V)\|_{L^\infty }$, such that for all $\mathbf{x},\mathbf{y} \in \mathbb{R}^N$ and $t>0$ we have
\begin{equation}\label{eq:lower_V_bounded}
    k^{\{V\}}_t(\mathbf x,\mathbf y)\geq \widetilde{C}_{\mathcal{G}} h_{\widetilde{c}_{\mathcal{G}}t}(\mathbf x, \mathbf y).
\end{equation}
\end{proposition}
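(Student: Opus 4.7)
The plan is to extend Proposition \ref{propo:V_bounded} from continuous to merely measurable bounded $V$ by smoothing via the Dunkl heat semigroup. I would set
\[
V^{(j)}(\mathbf{x}) := H_{1/j}V(\mathbf{x}) = \int_{\mathbb{R}^N} h_{1/j}(\mathbf{x},\mathbf{y})V(\mathbf{y})\,dw(\mathbf{y}), \qquad j \in \mathbb{N}.
\]
Each $V^{(j)}$ is continuous (by dominated convergence applied to the smooth heat kernel), non-negative, and bounded by $\|V\|_{L^\infty}$. The semigroup identity $H_s V^{(j)} = H_{s+1/j} V$ gives
\[
\mathbf{G}_1(V^{(j)})(\mathbf{x}) = \int_{1/j}^\infty H_u V(\mathbf{x})\,du \leq \mathbf{G}_1(V)(\mathbf{x}),
\]
so by Proposition \ref{prop:Greens} the quantities $\|\mathcal{G}(V^{(j)})\|_{L^\infty}$ are uniformly bounded by a constant depending only on $\|\mathcal{G}(V)\|_{L^\infty}$. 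Applying Proposition \ref{propo:V_bounded} to each $V^{(j)}$ then yields constants $\widetilde{C}, \widetilde{c}>0$ depending only on $\|\mathcal{G}(V)\|_{L^\infty}$ such that
\[
k^{\{V^{(j)}\}}_t(\mathbf{x},\mathbf{y}) \geq \widetilde{C}\, h_{\widetilde{c}t}(\mathbf{x},\mathbf{y}) \qquad \text{for all } \mathbf{x},\mathbf{y} \in \mathbb{R}^N,\ t>0,\ j \in \mathbb{N}.
\]

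Next I would verify that $k^{\{V^{(j)}\}}_t(\mathbf{x},\mathbf{y}) \to k^{\{V\}}_t(\mathbf{x},\mathbf{y})$ (along a subsequence). Setting $L^{(j)} := -\Delta_k + V^{(j)}$, the Duhamel identity of Theorem \ref{teo:AB} reads
\[
e^{-tL^{(j)}}f - e^{-tL}f = \int_0^t e^{-(t-s)L}(V - V^{(j)})e^{-sL^{(j)}}f\,ds,
\]
and combined with $L^2$-contractivity of $e^{-(t-s)L}$ and Gronwall's inequality it yields
\[
\|e^{-tL^{(j)}}f - e^{-tL}f\|_{L^2(dw)} \leq e^{2\|V\|_{L^\infty}t}\int_0^t \|(V - V^{(j)})e^{-sL}f\|_{L^2(dw)}\,ds.
\]
Local truncation arguments combined with strong $L^2$-continuity of $\{H_t\}$ on $L^2(dw)$ give $V^{(j)} \to V$ in $L^2_{\mathrm{loc}}(dw)$; extracting an almost-everywhere convergent subsequence and using dominated convergence, the right-hand side tends to $0$ as $j \to \infty$ for every $f \in L^2(dw)$. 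On the other hand, Theorem \ref{teo:continuous_k} (via \eqref{eq:lipschitz_k_t} and \eqref{eq:time_der}) provides uniform equicontinuity and pointwise bounds for $\{k^{\{V^{(j)}\}}_t\}_j$ with constants depending only on $\|V\|_{L^\infty}$. Arzelà-Ascoli then supplies, along any subsequence, a further subsequence converging locally uniformly to a continuous kernel, which must coincide with $k^{\{V\}}_t$ by testing against $f \in C_c(\mathbb{R}^N)$ and invoking the $L^2$-semigroup convergence above. Passing to the limit along this subsequence in the bound from the first paragraph yields \eqref{eq:lower_V_bounded}.

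The principal technical hurdle is the last identification step: since the approximation $V^{(j)} = H_{1/j}V$ is not monotone in $j$, Corollaries \ref{coro:mono} and \ref{coro:mono_1} are unavailable and one cannot transfer the lower bound via a direct monotone comparison. The Duhamel--Gronwall estimate furnishes $L^2$-convergence of the perturbed semigroups, while the uniform Lipschitz bound \eqref{eq:lipschitz_k_t} and time-regularity estimate \eqref{eq:time_der} of Theorem \ref{teo:continuous_k} supply the equicontinuity needed for Arzelà-Ascoli; the combination of these two ingredients is what serves as the substitute for the missing monotonicity and delivers the pointwise convergence of the kernels.
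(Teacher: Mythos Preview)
Your argument is correct and follows the same strategy as the paper: both regularize via $V^{(j)}=H_{1/j}V$, bound $\|\mathcal G(V^{(j)})\|_{L^\infty}$ uniformly (your semigroup identity $\mathbf G_1(V^{(j)})\leq \mathbf G_1(V)$ is in fact the content behind the paper's bare citation of Proposition~\ref{prop:Greens}), apply Proposition~\ref{propo:V_bounded}, and pass to the limit. The only differences are in the limiting step: the paper obtains $e^{-tL^{(j)}}f\to e^{-tL}f$ in $L^2(dw)$ via a Trotter--Kato type theorem (Pazy, Theorem~3.4.5, using $\mathcal D(L)=\mathcal D(L^{(j)})=\mathcal D(\Delta_k)$ and $L^{(j)}f\to Lf$) rather than your Duhamel--Gronwall estimate, and then concludes by a short contradiction argument---if the lower bound failed at $(\mathbf x_0,\mathbf y_0,t_0)$, continuity of $k^{\{V\}}_{t_0}$ and $h_{t_0}$ would make it fail on $B(\mathbf x_0,\delta)\times B(\mathbf y_0,\delta)$, contradicting the $L^2$ convergence tested against $\chi_{B(\mathbf y_0,\delta)}$---in place of your Arzel\`a--Ascoli identification.
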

\begin{proof}
For $n \in \mathbb{N}$ we consider 
\begin{align*}
    \widetilde{V}_n(\mathbf x)=\int_{\mathbb{R}^N} h_{1/n}(\mathbf x,\mathbf y)V(\mathbf y)\, dw(\mathbf y).    
\end{align*}
Then,  $ \lim_{n \to \infty}  \widetilde{V}_n(\mathbf x)=V(\mathbf x)$  for almost all $\mathbf x \in \mathbb{R}^N$ (see e.g.~\cite[Remark 5.5]{ADzH}) and, by the regularity of the heat semigroup, $\widetilde{V}_n$ are continuous functions. Moreover, by~\eqref{eq:probabilistic}, $\| \widetilde{V}_n\|_{L^{\infty}} \leq \| V\|_{L^{\infty}}$, and, by Proposition~\ref{prop:Greens}, there is a constant $C>0$ such that
\begin{align*}
    \|\mathcal G (\widetilde{V}_n)\|_{L^\infty}\leq C \| \mathcal G(V) \|_{L^\infty} .   
\end{align*} 
Recall that $\{e^{-t\tilde L_n}\}_{t \geq 0}$ and $\{e^{-tL}\}_{t \geq 0}$ are the contraction  semigroups on $L^2(dw)$ generated by the operators $-\tilde L_n=\Delta_k-\widetilde{V}_n$ and $-L=\Delta_k-V$ respectively. Then, for $f\in \mathcal D(L)=\mathcal D(\tilde L_n)=\mathcal D(\Delta_k)$,  we have $\lim_{n\to\infty} \tilde L_nf=Lf$. Hence Theorem 3.4.5 of \cite{Pazy} asserts that 
\begin{equation}\label{eq:L2-convergence}
\lim_{n\to \infty}  e^{-t\tilde L_n}f= e^{-tL}f \quad \text{in } {L^2(dw)}-{\rm norm}, \quad {\text{for all }} f\in L^2(dw).
\end{equation}
Further,  Proposition~\ref{propo:V_bounded} and Theorem~\ref{teo:continuous_k} imply that there are constants $C_{\mathcal{G}},c_{\mathcal{G}}>0$, $C_{\mathcal{G}}<1$, such that for all $n$,  we have 
\begin{align}\label{eq:lower_V_n}
    C_{\mathcal{G}}h_{c_{\mathcal{G}} t}(\mathbf x,\mathbf y)\leq k^{\{\widetilde{V}_n\}}_t(\mathbf x,\mathbf y) \text{ for all }(t,\mathbf x,\mathbf y)\in (0,\infty)\times  \mathbb R^N\times \mathbb R^N. 
\end{align}
Assume towards contradiction that $C_{\mathcal{G}}h_{c_{\mathcal{G}} t}(\mathbf x_0,\mathbf y_0)> k^{\{V\}}_t(\mathbf x_0,\mathbf y_0)$ for some $(t_0, \mathbf{x}_0,\mathbf{y}_0) \in (0,\infty)\times \mathbb{R}^N \times \mathbb{R}^N$. Then, by the fact that $k^{\{V\}}_t(\cdot,\cdot)$ and $h_t(\cdot,\cdot)$ are continuous, there are $\varepsilon, \delta>0$ such that $C_{\mathcal{G}}h_{c_{\mathcal{G}} t_0}(\mathbf x,\mathbf y)> k^{\{V\}}_{t_0}(\mathbf x,\mathbf y)+\varepsilon$ for all $(\mathbf{x},\mathbf{y}) \in B(\mathbf{x}_0,\delta) \times B(\mathbf{y}_0,\delta)$. Hence, applying~\eqref{eq:L2-convergence} to $f=\chi_{B(\mathbf{y}_0,\delta)}$ we obtain a contradiction.
\end{proof}

\begin{proof}[Proof of the  implication\texorpdfstring{~\eqref{item:potential_Green_bounded}$\implies$~\eqref{item:h_ct_leq_k_t}}{(c)=>(a)}]
Assume that $V\colon\mathbb{R}^N \longmapsto [0,\infty)$, $V \in L^1_{{\rm loc}}(dw)$, is Green bounded. Consider the operators $L_n=-\Delta_k+V_n$, $V_n=\min (V, n)$, $n \in \mathbb{N}$.  By Proposition~\ref{propo:V_bounded_2}  there are $\widetilde{C}_{\mathcal{G}},\widetilde{c}_{\mathcal{G}}>0$ such that for all $n \in \mathbb{N}$ we have
 \begin{align*}
      k_t^{\{V_n\}}(\mathbf x ,\mathbf y) \geq \widetilde{C}_{\mathcal{G}} h_{\widetilde{c}_{\mathcal{G}}t}(\mathbf x,\mathbf y)
 \end{align*}
for all $(\mathbf{x},\mathbf{y}) \in \mathbb{R}^N \times \mathbb{R}^N$. Now the required lower  bound for $k^{\{V\}}_t(\mathbf x,\mathbf y)$ follows from Theorem~\ref{teo:kernel_etL}.

\end{proof}

\appendix

\section{Proof of Proposition \ref{prop:integral_cadlag}}\label{sec:app}

\begin{lemma}\label{lem:J_f}
Assume that $f:[a,b]\to \mathbb R$ is a bounded c\`adl\`ag function. Define \begin{equation}
    J_f(t_0):=\big|\lim_{t\to t_0^{-}}f(t)-\lim_{t\to t_0^+} f(t)\big|= \big|\lim_{t\to t_0^{-}}f(t)-f(t_0)\big|.    
\end{equation}
Then, for all 
$ \varepsilon >0$, one has 
$$ \#\{t\in [a,b]: J_f(t)\geq \varepsilon \}= C_\varepsilon <\infty. $$  
\end{lemma}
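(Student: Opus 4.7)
The plan is to argue by contradiction using compactness of $[a,b]$ together with the two defining properties of a càdlàg function (right-continuity and the existence of left limits).

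Suppose toward contradiction that the set $S_\varepsilon:=\{t\in [a,b]\colon J_f(t)\geq \varepsilon\}$ is infinite. Then it contains a sequence of distinct points $\{t_n\}_{n\in\mathbb N}$, and by the Bolzano--Weierstrass theorem a subsequence (still denoted $t_n$) converges to some $t^*\in [a,b]$. By passing to a further subsequence, we may assume either (i) $t_n\downarrow t^*$ strictly from the right, or (ii) $t_n\uparrow t^*$ strictly from the left. (Only finitely many $t_n$ can equal $t^*$.)

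In case (i), right-continuity at $t^*$ gives $\delta>0$ with $|f(t)-f(t^*)|<\varepsilon/4$ for $t\in[t^*,t^*+\delta)$. For $n$ large enough $t_n\in (t^*,t^*+\delta)$, and since $J_f(t_n)\geq \varepsilon$, the definition of the left limit at $t_n$ lets us pick $s_n\in(t^*,t_n)$ so close to $t_n$ that $|f(s_n)-\lim_{s\to t_n^-}f(s)|<\varepsilon/4$; then
\[
|f(s_n)-f(t_n)|\geq |f(t_n)-\lim_{s\to t_n^-}f(s)|-\varepsilon/4\geq 3\varepsilon/4.
\]
Combined with $|f(t_n)-f(t^*)|<\varepsilon/4$ this gives $|f(s_n)-f(t^*)|\geq \varepsilon/2$, contradicting right-continuity since $s_n\downarrow t^*$. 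Case (ii) is analogous: let $L=\lim_{t\to t^{*-}}f(t)$, which exists because $f$ is càdlàg, choose $\delta>0$ with $|f(t)-L|<\varepsilon/4$ on $(t^*-\delta,t^*)$, pick $s_n\in(t^*-\delta,t_n)$ satisfying $|f(s_n)-\lim_{s\to t_n^-}f(s)|<\varepsilon/4$, and derive $|f(s_n)-L|\geq \varepsilon/2$ in contradiction with $s_n\to t^{*-}$.

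There is no genuine obstacle here; the only point requiring a bit of care is choosing $s_n$ so that it lies on the correct side of $t^*$ (to reach a contradiction with the appropriate one-sided behavior of $f$ at $t^*$), which is easily handled by taking $s_n$ within distance $\min(1/n,\,t_n-t^*)$ or $\min(1/n,\,t^*-t_n)$ of $t_n$ in cases (i) and (ii) respectively.
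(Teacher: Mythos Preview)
Your proof is correct and follows essentially the same route as the paper's: assume the set is infinite, extract an accumulation point $t^*$, and use the existence of one-sided limits at $t^*$ to contradict the presence of large jumps arbitrarily close to $t^*$. The only cosmetic difference is that the paper argues directly that the oscillation of $f$ is at most $\varepsilon/2$ on each of the one-sided neighborhoods $(t^*-\delta,t^*)$ and $(t^*,t^*+\delta')$ (hence $J_f(t)\leq\varepsilon/2$ there), while you split into the two monotone-subsequence cases and construct explicit witness points $s_n$; both executions capture the same idea.
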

\begin{proof}
Aiming for a contradiction, suppose that the set $ A=\{t\in [0,1]: J_f(t)\geq \varepsilon \}$ is infinite.  Let $t_0$ be a accumulation point of $A$.  There is $\delta>0$ such that $|f(t)-\lim_{t\to t_0^{-}}f(t)|<\varepsilon /4$ for 
$t_0-\delta<t<t_0.$   Thus $|f(t)-f(t')|\leq \varepsilon /2$ for $t_0-\delta <t,t'<t_0$. We proceed similarly to obtain 
$|f(t)-f(t')|\leq \varepsilon /2$ for $t_0<t,t'<t_0+\delta'$. 
So $J_f(t)\leq \varepsilon /2$ for $t\in (t_0-\delta, t_0+\delta')$, and we get the contradiction. 
\end{proof}

\begin{proof}[Proof of Proposition \ref{prop:integral_cadlag}] We may assume that $a=0$, $b=1$, and $|f(t)|\leq 1$. 
Fix $\varepsilon >0$. Consider the  finite set
\begin{align*}
   A= \{t\in [0,1]: J_f(t)\geq \varepsilon \}=\{t_1,t_2,\ldots,t_{m-1}\}    
\end{align*}
(see Lemma~\ref{lem:J_f}).  Let $U$ be an open set such that $\{t_1,t_2,\ldots,t_{m-1}\} \subset U$, $|U|<\varepsilon$, and $[0,1]\setminus U$ is a finite union of closed disjoint intervals $I_1$,\ldots $I_m$. Then 
$$ \int_U | f(t)|\, dt < \varepsilon.$$
Consider $I_j=[a_j, b_j]$. For every $t\in [a_j,b_j]$ there is $\delta_t>0$ such that $|f(t)-f(t')|<4\varepsilon $ for $t'\in [a_j,b_j]$, $|t'-t|<\delta_t$, because $J_ff(t)<\varepsilon$. By compactness, there is $\delta_j>0$ such that $|f(t)-f(t')|\leq 8\varepsilon $ for all $t,t'\in [a_j,b_j]$, $|t-t'|<\delta_j$. Take $\delta=\min\{\delta_1,\ldots,\delta_m\}$. 
If $n \in \mathbb{N}$ is such that $\frac{1}{n}\leq \delta/2$ and $[\frac{k}{n},\frac{k+1}{n}]\subseteq [a_j,b_j]$, then 
\begin{align*}
    \Big|\int_{k/n}^{(k+1)/n} f(t)\, dt -\frac{1}{n} f(k/n)\Big|<8\varepsilon /n.
\end{align*}
So, we easily conclude that
\begin{align*}
     \Big| \frac{1}{n}\sum_{k=0}^{n-1} f(k/n)- \int_0^1 f(t)\, dt\Big|\leq 20\varepsilon
\end{align*}
for $n \in \mathbb{N}$ large enough. 
\end{proof}


\begin{thebibliography}{99}


\bibitem{Amri}
B. Amri, A. Hammi,
\emph{Dunkl-Schr\"odinger operators\/},
{Complex Anal. Oper. Theory 113, (2019), 1033-1058.}


\bibitem{ADzH}
J.-Ph. Anker, J. Dziuba\'nski, A. Hejna,
	\emph{Harmonic functions, conjugate harmonic functions and the Hardy space $H^1$ in the rational Dunkl setting}, J. Fourier Anal. Appl. 25 (2019), 2356--2418.

\bibitem{Davies} E.B. Davies, \emph{One parameter semigroups}, 

\bibitem{dJ1}
	M.F.E. de Jeu,
	\emph{The Dunkl transform\/},
	Invent. Math. 113 (1993), 147--162.
	
\bibitem{Dunkl}
	C.F. Dunkl,
	\emph{Differential-difference operators associated to reflection groups\/},
	Trans. {Amer}. Math. 311 {(1989), no. 1,} 167--183{.}
	
\bibitem{D5}
	C.F.~Dunkl,
	\emph{Hankel transforms associated to finite reflection groups},
in: {\it Proc. of the special session on hypergeometric functions on domains of positivity, Jack polynomials and applications,}
	 Proceedings, Tampa 1991, Contemp. Math. 138 (1989),
	 123--138.	
	
\bibitem{Dziub-Preisner} 
	J. Dziuba\'nski, M. Preisner,
	\emph{On Riesz transforms characterization of $H^1$ spaces associated with some Schr\"odinger operators},
    Potential Anal. 35 (2011), 39--50. 
	
\bibitem{DzH1}
	J. Dziuba\'nski, A. Hejna,
	\emph{Remark on atomic decompositions for the Hardy space $H^1$ in the rational Dunkl setting},
    Studia Math. 251 (2020), no. 1, 89--110.
	
\bibitem{DH-heat}
	J. Dziuba\'nski, A. Hejna,
	\emph{Upper and lower bounds for Dunkl heat kernel},
    arXiv:2111.03513
    
    
\bibitem{GallardoRejeb}
    L. Gallardo, Ch. Rejeb,
    \emph{Support properties of the intertwining and the mean value operators in Dunkl theory},
    Proc. Amer. Math. Soc. 146, 1 (2018), 145–152.

\bibitem{Helgason} 
  S. Helgason,
  \emph{Differential geometry, Lie groups, and symmetric spaces,}
Pure and Applied Mathematics, 80. Academic Press, Inc. [Harcourt Brace Jovanovich, Publishers], New York-London, 1978. xv+628 pp. 


\bibitem{JiuLi}
    J. Jiu and Z. Li,
    \emph{On the representing measures of {D}unkl's intertwining operator},
    J. Approx. Theory 269 (2021), Paper No. 105605, 10 pp. 
    
\bibitem{Pazy} 
    A. Pazy,
    \emph{Semigroups of Linear Operators and Applications to Partial Differential Equations}, Springer New York, 1983.

\bibitem{Roesle99}
	M. R\"osler,
	\emph{Positivity of Dunkl's intertwining operator\/},
	Duke Math. J. 98 (1999), no. 3, 445--463.
	

\bibitem{Roesler2003}
	M. R\"osler,
	\emph{A positive radial product formula for the Dunkl kernel\/},
	Trans. Amer.Math. Soc. 355 (2003), no. 6, 2413--2438{.}
	
\bibitem{Roesler3}
	M. R\"osler:
	\emph{Dunkl operators (theory and applications).
	In: Koelink, E., Van Assche, W. (eds.)
	Orthogonal polynomials and special functions} (Leuven, 2002), 93--135.
	Lect. Notes Math. 1817, Springer-Verlag (2003).
	
	\bibitem{RV}
	M.~R\"osler, M. Voit,
	\emph{Markov processes related with Dunkl operators}, Adv. Appl. Math. 21 (1998), 575--643,
	
\bibitem{Roesler-Voit}
	M.~R\"osler, M. Voit,
	\emph{Dunkl theory, convolution algebras, and related Markov processes\/},
in \textit{Harmonic and stochastic analysis of Dunkl processes\/},
{P. Graczyk, M. R\"osler, M. Yor (eds.), 1--112, Travaux en cours 71,}
Hermann, Paris, 2008.

\bibitem{Sem} Yu.A. Semenov, \emph{ Stability of $L^p$-spectrum of generalized Schr\"odinger operators and equivalence of Green’s functions}, Int. Math. Res. Not. 12 (1997), 573--593.
\end{thebibliography}
\end{document}